\newtheorem{theorem}[subsection]{Theorem}
\newtheorem{proposition}[subsection]{Proposition}
\newtheorem{corollary}[subsection]{Corollary}
\newtheorem{lemma}[subsection]{Lemma}
\newtheorem{definition}[subsection]{Definition}
\theoremstyle{remark}
\newtheorem{remark}[subsection]{Remark}    
\newtheorem{example}[subsection]{Example}
\newtheorem{nota}{Notation}
\newcommand{\Hom}{\mathrm{Hom}}
\newcommand{\Cont}{\mathrm{Cont}}
\newcommand{\maps}{\mathrm{Maps}}
\newcommand{\CP}{\mathbb{C}P}
\newcommand{\Loc}{ L_{K(1)}}
\newcommand{\sm}{\wedge}
\newcommand{\tline}{\mathbb{L}}
\newcommand{\G}{\widehat{\mathbb{G}}}
\newcommand{\expf}{\exp_F}
\newcommand{\expg}{\exp_G}
\newcommand{\logg}{\log _G }
\newcommand{\logf}{\log _F }
\newcommand{\Z}{\mathbb{Z}}
\newcommand{\F}{\mathbb{F}}
\newcommand{\Q}{\mathbb{Q}}
\newcommand{\C}{\mathbb{C}}
\newcommand{\spf}{\mathrm{Spf}}
\newcommand{\spec}{\mathrm{Spec}}
\newcommand{\affline}{  \widehat{\mathbb{A}}_1}
\newcommand{\strg}{O_G}
\begin{document}

\title{Orientations and $p$-adic analysis}

\author{Barry John Walker}             
\email{bjwalker@math.northwestern.edu}       
\address{Mathematics Department,
         Northwestern University,
         2217 Sheridan Avenue,
         Evanston, IL 60202,
         USA}


\keywords{formal groups, homotopy, Bernoulli numbers.}

\begin{abstract}
Matthew Ando produced power operations in the Lubin-Tate cohomology theories and was 
able to classify
which complex orientations  were compatible with these  operations.
The methods used by Ando, Hopkins and Rezk to classify orientations of topological modular forms
can be applied to complex K-Theory.
Using techniques from local analytic number theory, we construct a theory of
integration on formal groups of finite height. 
This calculational device allows us to  show the equivalence of the
two descriptions for complex K-Theory.  As an application we show that the $p$-completion of 
the Todd genus is an $E_\infty$ map.
\end{abstract}


\maketitle

\tableofcontents




\section{Introduction}

The theory of power operations in cohomology theories arose with Steenrod operations
in Eilenberg MacLane spectra over $\F_p$.
Another example is Atiyah's construction of  power operations in complex K-Theroy using representation theory techniques \cite{MR0202130}.
In both
cases,a novel description of cohomology operations was obtained.
Other examples include Tomaoa tom Dieck power operations for bordism theories associated to classical Lie groups
\cite{MR0244989}.

In general, power operations  are internal properties of a spectrum. 
Thus a natural question
to ask is when such structure is preserved by maps of spectra.  Recall that an orientation is a map of spectra
whose source is the Thom spectrum of a space of $BO$.
Recently, Ando constructed  power operations for the Lubin-Tate spectra $E_n$ \cite{MR1344767}.  
In addition, he
produced a necessary and sufficient condition for a complex orientation of $E_n$ to be compatible
with the power operations in complex cobordism and $E_n$.



Continuing this
program, Ando along with Hopkins and Strickland generalized this result to different types of Thom spectra associated to
various spaces over $BO$ 
\cite{MR2045503}.
One of the goals of their article was to explain the existence of the Witten genus in a purely homotopical way.  
In \cite{AHR} an $E_\infty$ map from the Thom spectrum of the eight connected cover of
 $BO$,  $BO \langle 8 \rangle $,
to the spectrum of topological modular forms was produced.  The projection of this map to the elliptic spectrum
attached to the Tate curve produces the Witten genus.
In order to produce the map to topological modular forms, they needed to study congruences 
among the Bernoulli numbers of a formal group.
An outline of this program, along with its overall place in the tapestry that is mathematics, can be found in
the inspiring article of Hopkins \cite{MR1403956}.


In this note we  describe how the necessary and sufficient condition of Ando implies  the $E_\infty$ condition 
for an orientation of $p$-adic complex K-Theory.  It will turn out that these conditions are equivalent.  
One of the key ingredients is a theorem of Nicholas Katz involving measures on a formal 
group.  He was interested in producing congruences for sequences attached to elliptic
curves over local number fields that arise as the values of $p$-adic L-series
at negative integers.  We use isogonies on formal groups to produce analogous
sequences and congruences between them.
We believe that
this calculation locates the role of isogenies in the description of \cite{AHR}.
I would like to thank both Matthew Ando and Charles Rezk for their infinite patience.

\subsection{Notation}

Recall that a formal group law over a complete local noetherian ring $R$ is a power series $f(x, y) \in R[[x,y]]$ that satisfies
the formal properties of a group law.  If $F(x,y)$ is a such a power series we will often write
\[
x+ _F y
\]
for the formal sum.

For any $a \in \Z$ we write $[a]_F (t)$ for the
power series obtained by adding $t$ to itself $a$ times over the formal group law.  If
$a$ is negative, then we use the inverse power series.  See the appendix of \cite{MR860042}.
In the case when $F$ is a Lubin-Tate formal group law over $R$
we  write $[a]_F (t)$ for the endomorphism of $F$ induced by $a \in R$.
For a formal group law we write $F[p^n]$ for the set of $p^n$-torsion points of $F$.  This set is computed by 
factoring the $p^n$ series using the Weierstrass preparation theorem.

An (affine commutative dimension one) formal group $G$ is a functor from complete local rings 
to abelian groups that is isomorphic
to the formal affine line $\affline$  after forgeting to sets.
Examples include the formal multiplicative group $\G_m = \spf \Z_p[u, u^{-1}]$ and the 
formal additive group  $\G_a = \spf[\Z_p[u]$.
The algebraic
multiplicative group will be written as $\mathbb{G}_m = \spec \Z_p[u, u^{-1}]$. 
All formal groups $G$ are over the complete local ring $R$ unless otherwise stated.

A coordinate on a formal group $G$ is a choice of isomorphism of functors between $G$ and $\affline$.
Such an isomorphism produces a formal group law by considering the sum of the two natural maps 
$t_1, t_2 \colon R[[t]] \to R[[t_1, t_2]]$.

For a prime $p$, $\Z_p$ is the $p$-adic integers and $\Q_p$ the $p$-adic rationals.
The spectrum $K_p$ is the $p$-completion of periodic complex K-Theory.  It is
a $K(1)$ local $E_\infty$ ring spectrum.
We write $MU$ for the Thom spectrum 
associated to the space $BU$  over $BO$.

\subsection{Structure}

This article has three main parts.  In the first we discuss the role of measures in algebraic topology.  We
build up the language of integration on a formal group  and how to calculate in that setting.  In the middle
of the article we define Bernoulli numbers analogously to modular forms.  We then  investigate some of their properties
and how they arise in $K(1)$ local homotopy theory.
Finally, we give a brief review of the obstruction theory for $E_\infty$ orientations 
and verify the conditions we need for our story.
We close with a proof of the main result.

\section{The Main Result}

Suppose $\alpha \colon MU \to K_p$ is a homotopy ring map.  By Quillen's theorem $\alpha$ determines a formal group
law $F$ over the ring  $\Z_p$.  Let $\expf x$ be the strict isomorphism from the formal additive group law  to  $F$ 
over $\Q_p$. 
Expanding  the Hirzebruch series as 
\[
\frac{x}{\expf x} = \exp \left( \sum\limits_{k \geq 1 }  t_k \frac{x^k}{k!} \right)
\]
produces a sequence $\left\{ t_k \right\}$ attached to the orientation $\alpha$.

\begin{theorem} \label{thm1}
For an odd prime $p$ the map 
\[
\mathbb{D} : H_\infty (MU, K_p) \to \pi_0 E_\infty (MU, K_p).
\]
that sends $\alpha$ to the sequence $\left\{ t_k \right\}$ 
is an isomorphism.  Moreover, $\mathbb{D}$ is a section of the forgetful map.
\end{theorem}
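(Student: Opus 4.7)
The plan is to exhibit both $H_\infty(MU,K_p)$ and $\pi_0 E_\infty(MU,K_p)$ as the same set of sequences $\{t_k\}$, cut out by the same congruences, and then observe that $\mathbb{D}$ is the tautological identification. Concretely, I would separately describe each side in terms of $\{t_k\}$ together with a side-specific condition on the underlying formal group law $F$ (Ando's isogeny condition on one side, Kummer-style congruences on the other), and then use the integration formalism of the first section to identify the two conditions.

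For the domain, Quillen's theorem realizes an $H_\infty$ orientation as a formal group law $F$ over $\Z_p$ equipped with a coherence between the power operations on $MU$ (from symmetric products) and those on $K_p$ (from the multiplicative group). Ando's criterion \cite{MR1344767} asserts that this coherence is equivalent to a norm-coherence condition for $F$ along the canonical subgroup $\mathbb{G}_m[p]$, phrased as an identity relating $[p]_F(t)$ to a product over $p$-th roots of unity. Since $x/\exp_F x$ determines $\exp_F$, and therefore $F$, the sequence $\{t_k\}$ recovers $\alpha$, and $H_\infty(MU,K_p)$ is identified with the set of sequences $\{t_k\}$ satisfying Ando's condition.

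For the codomain, I would invoke the Goerss--Hopkins/AHR obstruction theory in its $K(1)$-local form at an odd prime (as reviewed in the final section of the paper). There the obstruction to upgrading a complex orientation to a $K(1)$-local $E_\infty$ map $MU \to K_p$ is shown to vanish precisely when the Bernoulli numbers $b_k$ associated to $\{t_k\}$ satisfy a uniform system of Kummer congruences modulo powers of $p$. Thus $\pi_0 E_\infty(MU,K_p)$ also embeds into the set of sequences $\{t_k\}$, cut out by this family of congruences.

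The technical heart is then to match the two conditions. Using the measure theory of the first section, I would associate to $F$ a measure on the formal group $G$ whose moments are, up to normalization, the $t_k$. Ando's norm-coherence along $\mathbb{G}_m[p]$ then becomes the distribution relation for this measure under pushforward by $[p]_F$, and an argument of Katz \cite{MR860042}, via Mahler expansions, turns a distribution relation on a $p$-adic measure into Kummer congruences among its moments (and conversely). Once the two conditions are matched, $\mathbb{D}$ is visibly bijective, and it is a section of the forgetful map because $\{t_k\}$ depends only on the underlying complex orientation, which is preserved when the $E_\infty$ structure is forgotten. The main obstacle I anticipate is precisely this last translation: Ando's condition is phrased geometrically in terms of an isogeny of formal groups, while the AHR condition is phrased arithmetically in terms of moments, and matching them literally, without losing $p$-adic factors in the bookkeeping, is where the integration formalism does its real work — exactly the point at which, as promised in the introduction, the role of isogenies in the AHR description is located.
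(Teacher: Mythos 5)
Your plan matches the paper's in its essentials: classify $H_\infty(MU,K_p)$ via Ando's norm-coherence criterion, classify $\pi_0 E_\infty(MU,K_p)$ via the AHR obstruction theory (as sequences $\{t_k\}$ with conditions), and use measure theory on formal groups to translate the isogeny condition into the arithmetical one. In particular, your identification of the technical heart — Katz's measure theory turning a norm/trace relation into $p$-integrality of moments, with the Coleman norm operator mediating between the two languages — is precisely what the paper does via Theorem~\ref{katzthm}, Proposition~\ref{trace}, and Corollary~\ref{andobk}.

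However, there is a genuine gap: the AHR classification of $\pi_0 E_\infty(MU,K_p)$ (Theorem~\ref{ahrtheorem}/\ref{muorient}) imposes \emph{two} conditions on $\{t_k\}$, not one. Your argument addresses only the first, namely that $t_k(1-a^k)(1-p^{k-1})$ arises as moments of a measure on $\Z_p^\times$ (equivalently, the generalized Kummer congruences). The second, that $t_k \equiv -B_k/k \bmod \Z_p$, is not a "distribution relation / Kummer" statement at all and does not follow from the measure-theoretic translation of the Ando condition; in the paper it is supplied separately by Miller's theorem (Theorem~\ref{miller}), which holds for \emph{any} naive ring orientation and is independent of $H_\infty$-ness. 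Without this input, $\mathbb{D}$ as you have defined it is not known to land in $\pi_0 E_\infty(MU,K_p)$, so the map is not even well-defined. You should also note that before invoking the AHR classification one must verify $\pi_0 E_\infty(MU,K_p) \neq \varnothing$ (the paper's Lemma~\ref{munull}, using the Adams--Priddy / Madsen--Snaith--Tornehave splitting of $gl_1 K_p$), since the bijection with sequences is only asserted given the existence of at least one $E_\infty$ orientation. Finally, a minor point: the distribution-to-congruence argument you attribute to Katz is from the Journ\'ees arithm\'etiques note \cite{MR0441928}, not the reference you cite.
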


Suppose $E$ is an  $E_\infty$ ring spectrum and $X$ is a space. 
Part of the data attached to $E$ is a collection of
functions
\[
E^0 ( E \Sigma_n \times_{\Sigma_n} X^n ) \xrightarrow{P} E^0 (X)
\]
for each  positive integer $n$  satisfying some formal identities called power operations.
More precise definitions along with some applications can be found in \cite{MR836132}.
Fix $E_\infty$ spectra  $E$ and $F$.  The set of homotopy classe
\[
\xymatrix{
E^0 (X) \ar[d]^f  \ar[r]^-{P_E} &  E^0 ( E \Sigma_n \times_{\Sigma_n} X^n )       \ar[d]_f      \\ 
F^0 (X)  \ar[r]_-{P_F}  &   F^0 ( E \Sigma_n \times_{\Sigma_n} X^n )    
}
\]
commutes for all spaces $X$ is written $H_\infty (E, F)$.  
The space of of $E_\infty$ maps from $E$ to $F$ is denoted  $E_\infty (E, F)$.


The method for classifying the components 
of the space of $E_\infty$ maps from complex
cobordism to $K_p$ is described in  \cite{AHR}.   Each component corresponds to a sequence $\left\{ t_k \right\}$ satisfying 
some conditions.  We will make these conditions explicit in Theorem \ref{ahrtheorem}.   
The source of $\mathbb{D}$ was completely determined by
Ando \cite{MR1344767}.
We will show that the necessary and sufficient condition 
describing the source 
is enough to produce the required conditions on the sequence $\left\{ t_k \right\} $.



\subsection{Background}

Quillen calculated the effect of the power operations in complex cobordism on the tautological line bundle
over $\CP^\infty$ \cite{MR0290382}.
One of Ando's insights was the similarity between this calculation of  Quillen
and  an isogeny  in the theory of formal groups, which we describe.  
Given a finite subgroup of a formal group 
Lubin constructs the quotient  formal group \cite{MR0209287}.
Part of the data is an isogeny from the group to the quotient.  This isogeny has the same general form
of Quillen's calculation.
Ando's work lead to the viewpoint that isogenies can be used to construct power operations in complex oriented cohomology
theories.

Suppose $p$ is a prime.  Let $E_1$ be the Lubin-Tate spectrum associated to a universal deformation of the Honda formal
group law over $\F_p$ of height one.
It is a $K(1)$ local $E_\infty$ ring spectrum and so has a theory of power operations.
For odd primes, $K_p$ is a model for $E_1$.

Since complex cobordism is the universal complex oriented spectrum, a map  
\begin{eqnarray} \label{mapalpha}
\alpha:  MU \to E_1
\end{eqnarray}
of naive homotopy ring spectra (compatible with smash products)
determines a graded formal group law over $\pi_* E_1$.  Since
$E_1$ is an even two periodic theory, we can consider the ungraded group law over
\[
\pi_0 E_1 \cong \Z_p.
\]
Call this formal group law $F_\alpha$.  We may drop the subscript $\alpha$ when it becomes notationally convenient.
The following result is the necessary and sufficient condition for $\alpha$ to be an $H_\infty$ orientation.
\begin{theorem}[Ando] \label{andocondition} 
The orientation $\alpha$ (\ref{mapalpha})
commutes with the power operations in the source and target if and only if 
\[
\prod\limits_{c \in F[p] } t +_F c = [p]_F (t).
\]
\end{theorem}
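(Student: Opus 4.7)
The plan is to verify the $H_\infty$ commutativity square on a single universal class and translate it into a formal group identity. Because every complex orientation is determined by its value on the tautological line bundle, and because the power operations in both $MU$ and $E_1$ are Cartan-compatible with the formal group law, a splitting-principle argument reduces the $H_\infty$ condition to checking commutativity for the Euler class $x \in MU^0(\CP^\infty)$. So the question reduces to comparing $\alpha_* P_p^{MU}(x)$ with $P_p^{E_1}(\alpha_* x)$.

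To compute the $MU$ side I would pull back along the diagonal inclusion $B\Z/p \hookrightarrow B\Sigma_p$ and work in the ring
\[
MU^*(\CP^\infty \times B\Z/p) \cong MU^*[[x,y]]/[p]_F(y),
\]
where $y$ serves as a universal $p$-torsion point. Quillen's formula for power operations in complex cobordism identifies the restriction of $P_p^{MU}(x)$ with the product $\prod_{i=0}^{p-1}(x +_F [i]_F(y))$. On the target side, $E_1$ has height one, so the ideal $([p]_F(y)) \subset \Z_p[[y]]$ cuts out an \'etale subgroup scheme $F[p]$ of honest $p$-torsion points, and Lubin's quotient construction realizes the isogeny $F \to F/F[p]$ as the map $t \mapsto [p]_F(t)$ under the natural identification $F/F[p] \simeq F$ that is available only at height one. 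Ando's construction of power operations on Lubin-Tate spectra then shows that $P_p^{E_1}(x)$, restricted along the same diagonal, is precisely $[p]_F(x)$. Commutativity of the power-operation square at $x$, after observing that the images $\{[i]_F(\alpha_* y)\}_{i=0}^{p-1}$ exhaust $F[p]$, translates exactly into the identity $\prod_{c \in F[p]}(x +_F c) = [p]_F(x)$. Running the same calculation backwards proves the identity is sufficient as well.

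The main obstacle is pinning down the $E_1$-side power operation in terms of the isogeny $F \to F/F[p]$; this rests on Ando's construction of power operations for Lubin-Tate spectra together with the height-one fact that $[p]_F$ has exactly $p$ roots in its ring of integers, forming a subgroup isomorphic to $\Z/p$. Once this dictionary between $H_\infty$-structure and formal group isogenies is in place, the splitting-principle reduction from general $X$ to $\CP^\infty$ is routine, and the theorem follows by matching Quillen's product formula with the isogeny description of $P_p^{E_1}$.
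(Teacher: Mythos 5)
First, a caveat: the paper does not prove this statement; it quotes it as a theorem of Ando with a citation to \cite{MR1344767}, so there is no in-paper proof to compare against. What follows is an assessment of your argument on its own terms.

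Your overall architecture --- reduce via the splitting principle to the Euler class on $\CP^\infty$, compute the $MU$-side by Quillen's product formula $\prod_{i=0}^{p-1}(x +_F [i]_F(y))$, and compare with an intrinsic formula for $P_p^{E_1}$ --- is the right shape and matches the outline of Ando's actual argument. The Quillen side is correct.

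The $E_1$-side, however, contains a circularity. You appeal to ``the natural identification $F/F[p] \simeq F$'' under which Lubin's quotient isogeny becomes $t \mapsto [p]_F(t)$. No such canonical identification exists: $F$ and $F/F[p]$ are abstractly isomorphic over $\Z_p$, but the set of such isomorphisms is a torsor under $\mathrm{Aut}(F) \cong \Z_p^\times$. Both $\prod_{c \in F[p]}(t +_F c)$ and $[p]_F(t)$ are isogenies with kernel $F[p]$, so they differ by an automorphism of $F$, and whether that automorphism is the identity (in the given coordinate) \emph{is} the Ando condition --- equivalently, as the paper discusses in the section on the Coleman norm operator, that $t$ is fixed by $N_G$. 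If the identification you invoke really made these two power series coincide for every coordinate, the theorem would be vacuous, and the paper's entire subsequent analysis of which coordinates satisfy the condition would be pointless. You are implicitly assuming what you want to prove.

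The correct route to $P_p^{E_1}(x) = [p]_F(x)$ does not pass through Lubin's isogeny at all. For $E_1 = K_p$ at an odd prime, the power operation restricted to $B\Z/p$ and reduced modulo the transfer ideal is the Adams operation $\psi^p$; since $\psi^p$ is a $\Z_p$-algebra endomorphism of $K_p^0 \CP^\infty \cong \Z_p[[t]]$ sending the universal line class $L$ to $L^p$, for any Euler class $x = h(L)$ one has $\psi^p(x) = h(L^p) = [p]_F(x)$. This computation (Atiyah for $K$-theory; the norm construction in Ando's paper for general $E_n$) is the substantive input your proof should supply in place of the phantom identification. Once $P_p^{E_1}(x) = [p]_F(x)$ and $\alpha_*P_p^{MU}(x)\big|_{B\Z/p} = \prod_{c}(x +_F c)$ (after reduction mod transfer, $y$ specializing to a primitive $p$-torsion point) are both in hand, the $H_\infty$ square commutes precisely when the stated identity holds.

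A minor slip: $F[p]$ over $\Z_p$ is not \'etale --- its special fiber is $\spec \F_p[y]/(y^p)$, which is connected --- and its nonzero points only appear over the ramified extension $\Z_p[\zeta_p]$. This does not affect the argument, but the word ``\'etale'' should be dropped.
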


It is convenient to use the language of measures to describe the target of $\mathbb{D}$.
A $\Z_p$ valued measure on $\Z_p$ is a (continuous) linear functional 
\[
\mu : \Cont (\Z_p, \Z_p) \to \Z_p.
\]
We denote the collection of all such by $M(\Z_p, \Z_p)$.
The moments of $\mu$ is the sequence obtained by applying $\mu$ to the function $x^k$ for $k \geq 0$.
A description of the target in Theorem \ref{thm1}  is
\begin{theorem}[Ando, Hopkins, Rezk] \label{ahrtheorem}
For odd primes $p$, there is a bijection between elements $\alpha \in \pi_0 E_\infty (MU, K_p)$
and sequences 
\[
\left\{ t_k \right\} \in  \prod\limits_{k \geq 1} \Q_p
\]
satisfying
\begin{enumerate}
 \item For any $a \in \Z_p ^\times - \left\{ \pm 1 \right\} $ there exists a measure $\mu$ on the $p$-adic units, 
depending on $a$, such that
\[
\mu (x^k) = \int\limits_{\Z_p^\times} x^k d \mu (x) =  t_k( 1 -a^k)(1 - p^{k-1} ) 
\]
for all $k \geq 1$ and 
\item $ - \frac{B_k}{k} \equiv t_k \mod \Z_p$.

\end{enumerate}

\end{theorem}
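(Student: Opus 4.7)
The plan is to reduce the classification of $\pi_0 E_\infty(MU, K_p)$ to a problem in $p$-adic measure theory, using the Goerss--Hopkins obstruction theory together with Rezk's logarithm. For $E_\infty$ ring spectra in the $K(1)$-local setting, Goerss--Hopkins obstruction theory describes the moduli of $E_\infty$-maps in terms of an Andr\'e--Quillen cohomology tower. The controlling fact is the existence of Rezk's logarithm $\ell \colon \mathrm{gl}_1(K_p) \to K_p$, an infinite loop map fitting into a fiber sequence whose effect on homotopy is determined by Adams operations. This reduces a computation of $E_\infty$-maps to a computation of ordinary maps of spectra together with cocycle data involving $\ell$; in particular, $\pi_0 E_\infty(MU, K_p)$ fits into a short exact sequence whose quotient is a module of measures.

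The next step is to translate the Hirzebruch data into measure-theoretic language. Given $\alpha$, the Thom class determines a characteristic map $BU \to \mathrm{GL}_1(K_p)$ whose composite with $\ell$ is a class in $K_p^0(BU)$. Restricting to $\CP^\infty$ and using the identification of $K_p^0(\CP^\infty)$ with $\Cont(\Z_p, \Z_p)$ via the Mahler basis, this class corresponds (after an Adams-operation twist) to a measure on $\Z_p^\times$. The factor $(1-a^k)$ appears as the eigenvalue of $\psi^a - 1$ on $u^k$, where $u$ is the tautological line bundle, and $(1-p^{k-1})$ records the difference between integrating over $\Z_p$ and integrating over $\Z_p^\times$. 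The expansion of $x/\expf x$ as $\exp(\sum_k t_k x^k/k!)$ then converts $\ell$ applied to the Thom class into the moments of $\mu$, yielding exactly the formula $\mu(x^k) = t_k(1-a^k)(1-p^{k-1})$. Thus condition (1) is precisely the requirement that $\ell \circ \alpha$ lies in the appropriate module of bounded measures.

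The last step concerns condition (2). The Todd genus $MU \to ku \to K_p$ corresponds to the Hirzebruch series $x/(e^x - 1)$, whose logarithm coefficients are exactly $-B_k/k$. Hence $\{-B_k/k\}$ represents a canonical $E_\infty$ orientation, and any other $E_\infty$ orientation differs from it by a sequence of elements of $\pi_0 K_p = \Z_p$, which is (2). The main obstacle will be the reverse direction: given $\{t_k\}$ satisfying (1) and (2), produce an actual $E_\infty$ map. This requires checking that the Goerss--Hopkins obstructions vanish, which ultimately reduces to the Amice--V\'elu theorem characterizing bounded measures by their moments, combined with the observation that for odd $p$ there is no extra $2$-torsion in $\Z_p^\times$ to obstruct the reconstruction (which is why the hypothesis $p$ odd enters the statement).
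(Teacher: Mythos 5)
Your proposal correctly identifies the two central tools --- Rezk's $K(1)$-local logarithm $l_1$ and a $p$-adic measure-theoretic interpretation of the cohomology $K_p^0 K_p$ --- so the broad outline is right. However, the specific mechanism you invoke differs from the paper's in ways that matter, and several of your attributions are off.

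\textbf{Wrong obstruction-theoretic framework.} You appeal to Goerss--Hopkins obstruction theory and Andr\'e--Quillen cohomology, but the paper does not use this at all. It uses the Ando--Blumberg--Gepner--Hopkins--Rezk description of $E_\infty(MU, R)$ as a homotopy pullback built from the units spectrum $gl_1 R$ and the stable $j$-homomorphism (Definitions \ref{mgsquare} and \ref{msu_def}). This reduces $\pi_0 E_\infty(MU, K_p)$ to the set of null homotopies of $u\langle 2\rangle \to gl_1 S \to gl_1 K_p$. The step where $l_1$ enters is not a cocycle condition for an A--Q tower but a collapsing of this null-homotopy problem via the Bousfield--Kuhn functor $\Phi$, and the key geometric input you omit entirely is the \emph{complex unstable Adams conjecture} (Friedlander), which supplies the diagram one stabilizes. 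Following $\Phi$, the $K(1)$-local sphere is identified with the fiber of $1-\psi^a$ on $K_p$ (Adams--Baird--Bousfield--Ravenel), which is where the crucial factor $(1-a^k)$ genuinely comes from: it is the effect of $1-\psi^a$ on $\pi_{2k}$, not an ``eigenvalue of $\psi^a - 1$ on $u^k$'' inside $K_p^0(BU)$ as you assert.

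\textbf{Misattribution of the $(1-p^{k-1})$ factor.} You attribute $(1-p^{k-1})$ to ``the difference between integrating over $\Z_p$ and over $\Z_p^\times$.'' In the paper it is the effect of Rezk's formula $l_1 = 1 - \psi^p/p$ on $\pi_{2k}$, i.e.\ $1 - p^k/p = 1 - p^{k-1}$ (Theorem \ref{rezk_thm}). The restriction to measures on $\Z_p^\times$ is a separate phenomenon, encoded by the generalized Kummer congruences and Theorem \ref{image_gkc}, not by that factor.

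\textbf{Reverse direction and the role of $p$ odd.} Your appeal to Amice--V\'elu is not what the paper does; the reverse direction is handled by Theorem \ref{image_gkc}, which characterizes $K_p^0 K_p$ inside $\prod_{k\ge n}\Q_p$ by the generalized Kummer congruences, together with the density of the Mahler basis (Example \ref{moments_gkc}). More importantly, your explanation for the odd-prime hypothesis --- ``no extra $2$-torsion in $\Z_p^\times$ to obstruct reconstruction'' --- is wrong. As the paper states explicitly after Theorem \ref{muorient}, the obstacle at $p=2$ is that the $K(1)$-local sphere is no longer the fiber of $1-\psi^a$ on $K_2$ but must be built from real $K$-theory $KO_2$, which changes the classification problem to real orientations. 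Condition (2) is established by Miller's theorem on universal Bernoulli numbers and the $S^1$-transfer (Theorem \ref{miller}), not by the mere existence of the Todd genus as a basepoint; your argument assumes a canonical $E_\infty$ orientation to compare against, which is exactly what is being proved.
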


The rational number  $B_k$ is the classical $k$-th Bernoulli number with generating series
\[
\frac{x}{e^x -1 } = \sum\limits_{k \geq 0 } B_k \frac{x^k}{k!}.
\]
The congruence in condition two means that the difference between the $p$-adic rationals $t_k$ and $- B_k / k$ 
is a $p$-adic integer.


\section{Cohomology Operations}

In this section, we will recall the connection between $p$-adic integration and operations in $K_p$.
The relationship between Bernoulli numbers and $p$-adic integration has an interesting relationship
with topological K-Theory.  One of the first authors to study this phenomenon was Francis Clarke \cite{pclarke}.
Additional work  involving the Kummer congruences and its role in the homotopy of the space $BU$
can be found in \cite{MR942424}.

Let $K(1)$ be Morava K-Theory of height one at the odd prime $p$ and
 $\Loc$ Bousfield localization with respect to the homology theory $K(1)$.
Suppose $f \colon S \to \Loc  \left( K_p \sm K_p \right) $, i.e. $f \in \left(K_p \right)_0^\wedge K_p$
and $a \in \Z_p^\times$.  If $\psi^a$ is the Adams operation at $a$, 
then the  composition
\[
S \xrightarrow{f} \Loc \left( K_p \sm K_p \right)  \xrightarrow{\psi^a \sm id} \Loc  \left( K_p \sm K_p \right)  \to \Loc K_p \cong K_p.
\]
is an element of $\pi_0 K_p \cong \Z_p$.  Letting $a$ vary shows that  elements of
$\pi_0 \Loc \left(  K_p \sm K_p  \right) $ are continuous
functions on $\Z_p^\times$.  Dualizing this observations yields

\begin{theorem}[Adams, Harris, Switzer] \label{kpmeasure}
There is an isomorphism
\[
\Hom_{\Cont} (\Cont (\Z_p^\times, \Z_p), \Z_p) = M(  \Z_p^\times , \Z_p)  \cong [K_p, K_p] = K_p^0 K_p.
\]
\end{theorem}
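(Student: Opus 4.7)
The first equality in the statement is the definition of $p$-adic measures and the last is the definition of $K_p^0 K_p$, so the real content is the middle isomorphism $M(\Z_p^\times, \Z_p) \cong [K_p, K_p]$. The plan is to split this into two independent steps: first identify $\pi_0 \Loc(K_p \sm K_p)$ with $\Cont(\Z_p^\times, \Z_p)$, and then identify $[K_p, K_p]$ with the continuous $\Z_p$-linear dual of this ring.

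For the first step, the composition displayed just before the theorem already provides the forward map. For each $f \in \pi_0 \Loc(K_p \sm K_p)$ and $a \in \Z_p^\times$ the composition produces an element of $\pi_0 K_p = \Z_p$, and the resulting function of $a$ is continuous because the assignment $a \mapsto \psi^a$ is continuous. To see that this forward map is an isomorphism I would appeal to the standard Morava-theoretic identification: for the Lubin-Tate spectrum $E_1$ one has
\[
\pi_0 \Loc(E_1 \sm E_1) \cong \Cont(\Z_p^\times, \Z_p),
\]
with the action of the height-one Morava stabilizer group $\Z_p^\times$ realized on homotopy precisely by the Adams operations. Since $K_p \simeq E_1$ for $p$ odd, this gives the desired identification.

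For the second step, because $K_p$ is a $K(1)$-local ring spectrum with $\pi_0 K_p = \Z_p$ and $K_p \sm K_p$ is pro-free as a $K_p$-module, the universal coefficient spectral sequence collapses to give
\[
[K_p, K_p] \cong \pi_0 F(K_p, K_p) \cong \Hom_{\Cont}\bigl( \pi_0 \Loc(K_p \sm K_p), \, \Z_p \bigr),
\]
where $F$ denotes the function spectrum in the $K(1)$-local category. Combining with the first step and the definitional equality $M(\Z_p^\times, \Z_p) = \Hom_{\Cont}(\Cont(\Z_p^\times, \Z_p), \Z_p)$ closes the chain of isomorphisms.

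The main obstacle is the first step, which is essentially the theorem itself. Classically it is proved by verifying that the Adams operations $\psi^a$ for $a \in \Z_p^\times$ form a $\Z_p$-linearly dense family inside the operation ring, using their diagonal action on the Bott element and on $K_p^0 \CP^\infty$; the continuous function structure then emerges from the Pontryagin-style duality between operations and cooperations. Once this density is in hand, the downstream duality argument is formal.
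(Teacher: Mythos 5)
Your proof is correct, but your route is not quite the paper's. The paper's proof goes in one step from the Adams--Harris--Switzer description of \emph{operations} in integral periodic $K$-theory (the subring of $\Q[x]$ spanned by the binomial coefficients $\binom{x}{i}$) and then applies $K(1)$-localization to obtain the $p$-adic statement; in other words the heavy lifting is done by the classical calculation of $KU^0KU$ plus the observation that passing to the $K(1)$-local category converts ``stably numerical Laurent polynomials'' into $\Z_p$-valued measures on $\Z_p^\times$. By contrast you start from the Lubin--Tate/Morava description of \emph{cooperations}, identifying $\pi_0 \Loc(K_p \sm K_p)$ with $\Cont(\Z_p^\times,\Z_p)$ via the height-one stabilizer group acting through Adams operations, and only then dualize using a collapsing universal-coefficient spectral sequence. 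This is an equally valid and somewhat more ``chromatic'' route (the paper itself gestures at it by citing Hovey's $E_n$ analysis at the end of the proof), and it has the advantage of making the pro-freeness of $K_p^\vee K_p$ and the role of the UCSS explicit, whereas the paper treats the dualization step as implicit. What it gives up is the elementary foothold in Mahler's theorem and binomial coefficients, which the paper relies on later (e.g.\ in Theorem~\ref{mahlerthm} and Corollary~\ref{mahlerderv}), so the paper's choice of citation is not accidental. There is no genuine gap in your argument, only a dependence on the same standard inputs you flag yourself.
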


\begin{proof}
This statement follows from the three authors' work on operations in integral K-Theory \cite{MR0293617}.  
They describe the
operations in terms of the generalized binomial coefficients $ x \choose i $.  
Using $K(1)$ localization techniques, we  can produce this statement from the integral result.  
Some details are  worked out in \cite{AHR}
and in \cite{knlocal}.
A result of this type for the Lubin-Tate  spectra $E_n$ for any $n \geq 1$ can be found in   \cite{MR2076002}.
\end{proof}
It is shown in \cite{AHR} that the effect in $\pi_{2k}$ is given by multiplication by the $k$-th moment, $\mu(x^k)$.
Later in the article (Theorem \ref{effectcp}) we will describe the effect of an operation on the K-Theory
of infinite complex projective space $\CP^\infty$.










\section{Measure Theory}

In order to construct the map in Theorem \ref{thm1} we will use the theory of integration
on a formal group of finite height.

\subsection{$p$-divisible groups}

A $p$-divisible group of height $h$ is a system 
\[
(G_n, i_n)
\]
for $n \geq 1$ where
\begin{enumerate}

 \item Each $G_n$ is a locally  free commutative group scheme of rank $p^{nh}$.

 \item $i_n : G_n  \to G_{n +1} $ is a homomorphism of commutative group schemes.

 \item The sequence
\[
0 \to G_n  \xrightarrow{i_n} G_{n +1 } \xrightarrow{p^n} G_{n + 1}
\] 
is exact.
\end{enumerate}

Given a $p$-divisible group, $(G_n, i_n)$ the colimit
\[
\lim\limits_{\rightarrow} G_n
\]
is a functor from rings to abelian groups.
This construction does not always yield a formal group.

Suppose  $G$ is a (affine commutative dimension one) formal group of finite height $h$ over  a complete local ring $R$.  
Suppose further
that the residue field is perfect and has characteristic $p$.
These hypothesis imply that multiplication by $p$ on $G$ is an isogeny, that is a finite free map of group schemes. 
If $\Gamma_n$ denotes the kernel of the isogeny $p^n$ on $G$  and $i_n : \Gamma_n \to \Gamma_{n + 1}$ is the induced
map on the kernels,
the system
\[
(\Gamma_n,  i_n)
\]
is a $p$-divisible group of height $h$ over $R$.
The current  hypothesis on $R$ forces the colimit of $(\Gamma_n, i_n)$ 
to be the formal group $G$.
See \cite{MR1610452} for more details on $p$-divisible groups and a proof of this fact.


The Cartier dual of $\Gamma_n$ is represented by the functor
\[
\underline{\Hom } _{\mathrm{Groups}} (\Gamma_n,  \mathbb{G}_m)
\]
from complete local rings  to abelian groups and is denoted by $\Gamma_n^\vee$.
The homomorphism $p: \Gamma_{n+1} \to \Gamma_n $ induces a homomorphism $i^\vee_n : \Gamma^\vee_n \to \Gamma^\vee_{n + 1}$ 
on the duals and 
the data
\[
(\Gamma_n^\vee, i^\vee _n )
\]
is a $p$-divisible group.
As before,  the colimit of $(\Gamma'_n, i'_n)$ is a formal group
of height equal to the height of $G$.

\begin{nota}
The $p$-divisible dual of the formal group $G$ is denoted by   $G^\vee$.
\end{nota}

\subsection{Tate Modules}

Suppose $G$ is a formal group.  Recall that $G[p^n]$ is the set of $p^n$ torsion points of $G$.
\begin{definition}
The Tate module associated to a formal group $G$ is 
\[
T_p G := \lim\limits_{\leftarrow} G[p^n].
\] 
\end{definition}
The Tate module  is a $\Z_p$ module of rank equal to the height of $G$.
The Tate module of the $p$-divisible dual of $G$ is 
the inverse limit of its $p^i$ torsion points of the dual:
\[
T_p G^\vee = \lim\limits_{\leftarrow} G^\vee[p^i].
\]

\begin{lemma}[Tate] \label{tate_thm}
If $R_C$ is the ring of integers in the completion of an algebraic closure of $R$ then 
there is an isomorphism of $\Z_p$ modules
\[
T_p G^\vee \to \Hom_{\mathrm{R_C-Groups}} ( G \times {R_C}, {\mathbb{G}_m} \times {R_C} ).
\]
\end{lemma}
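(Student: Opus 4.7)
The plan is to reduce the statement to a limit computation by applying Cartier duality at each finite level, and then to exploit the fact recorded above that $G$ is the colimit of its $p^n$-torsion subgroup schemes $\Gamma_n$ over $R_C$, which is valid because the residue field of $R_C$ is perfect of characteristic $p$.  Under these identifications, the map in the statement will send $(x_n) \in T_p G^\vee$ to the unique homomorphism $G \times R_C \to \mathbb{G}_m \times R_C$ whose restriction to $\Gamma_n \times R_C$ is the Cartier-dual interpretation of $x_n$.

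I would carry this out in two steps.  First, Cartier duality at each finite level identifies $G^\vee[p^n](R_C) = \Gamma_n^\vee(R_C) = \Hom_{R_C}(\Gamma_n \times R_C, \mathbb{G}_m \times R_C)$, and a direct calculation with the group structure on $\mathbb{G}_m$ shows that the transition maps in the inverse system defining $T_p G^\vee$ correspond, under these identifications, to restriction along the inclusions $\Gamma_n \hookrightarrow \Gamma_{n+1}$.  The key point is that multiplication by $p$ on $\Gamma_{n+1}^\vee$ sends $f$ to $f \circ [p]_{\Gamma_{n+1}}$, and since the image of $[p]$ on $\Gamma_{n+1}$ lies in $\Gamma_n$, this records the same data as the restriction of $f$ along $\Gamma_n \hookrightarrow \Gamma_{n+1}$.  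The upshot is an identification of $T_p G^\vee$ with the inverse limit $\lim\limits_{\leftarrow} \Hom_{R_C}(\Gamma_n \times R_C, \mathbb{G}_m \times R_C)$.  Second, I invoke $G = \lim\limits_{\rightarrow} \Gamma_n$ to convert the inverse limit of hom-sets into $\Hom_{R_C}(G \times R_C, \mathbb{G}_m \times R_C)$, and I verify that the resulting bijection is $\Z_p$-linear using that both sides inherit their module structure from multiplication by $p^k$ on the source or target.

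The main obstacle is the passage in the second step from finite-level compatibility to an actual morphism of formal schemes: the $\Gamma_n$ are finite affine while $G$ is a formal scheme, so one must check that the compatible system of morphisms assembles into a genuine morphism of formal group schemes rather than merely a compatible family of maps of $R_C$-point functors.  I would handle this at the level of coordinate rings.  Fixing a coordinate $t$ on $G$ yields $\mathcal{O}(G) = R_C[[t]]$ and $\mathcal{O}(\Gamma_n) = R_C[[t]]/([p^n]_F(t))$.  Because $G$ has positive height, the $t$-adic valuation of $[p^n]_F(t)$ grows without bound with $n$, so $R_C[[t]]$ recovers the inverse limit of the finite quotients $\mathcal{O}(\Gamma_n)$ as a topological $R_C$-algebra.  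A compatible family of $R_C$-algebra maps from $\mathcal{O}(\mathbb{G}_m)$ into the $\mathcal{O}(\Gamma_n)$ therefore lifts uniquely to a continuous $R_C$-algebra map into $\mathcal{O}(G)$, giving the required morphism of formal group schemes and completing the proof.
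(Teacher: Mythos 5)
The paper does not actually prove this lemma: it is stated without proof and attributed to Tate, with a pointer to \cite{MR1610452} (Serre's Bourbaki report on Tate's work) in the paragraph just above for the fact that the colimit of the $\Gamma_n$ recovers $G$. So there is no internal proof to compare with; your proposal is a reconstruction of Tate's own argument, and your two-step plan — Cartier duality at each finite level $\Gamma_n^\vee(R_C) = \Hom_{R_C}(\Gamma_n\times R_C, \mathbb{G}_m\times R_C)$, identify the transition maps in the inverse system with restriction along $\Gamma_n\hookrightarrow\Gamma_{n+1}$ using $f\circ[p] = f^p$, then assemble the compatible family into a single morphism using $G=\varinjlim\Gamma_n$ — is the standard one.

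There is one concrete inaccuracy in the step where you pass to coordinate rings. You assert that the $t$-adic valuation of $[p^n]_F(t)$ grows without bound, and use this to conclude $R_C[[t]] = \varprojlim R_C[[t]]/([p^n]_F(t))$. But the $t$-adic valuation of $[p^n]_F(t)$ is exactly $1$ for every $n$, since the linear coefficient is $p^n$, which is a nonzero element of $R_C$. The quantity that grows without bound is the \emph{Weierstrass degree} of $[p^n]_F(t)$, namely $p^{nh}$: one has $[p^n]_F(t) \equiv (\text{unit})\cdot t^{p^{nh}} \pmod{\mathfrak m_{R_C}}$, so by Weierstrass preparation the ideal $([p^n]_F(t))$ is generated by a distinguished polynomial of degree $p^{nh}$, and $\mathcal O(\Gamma_n)$ is finite free of that rank over $R_C$. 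The assertion that $R_C[[t]]$ is recovered from these finite quotients must be phrased in terms of the $(\mathfrak m_{R_C}, t)$-adic topology and the fact that the Weierstrass degree $p^{nh}\to\infty$, not in terms of the $t$-adic valuation of the $p^n$-series. With that replacement the argument goes through; the lifting of a compatible family of $R_C$-algebra maps $\mathcal O(\mathbb{G}_m)\to\mathcal O(\Gamma_n)$ to a continuous map into $\mathcal O(G)$, and the check that it is a Hopf-algebra map, are exactly as you describe.
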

It is well known that when $G$ is a Lubin-Tate group of height one we can replace $R_C$ with the ground ring $R$.

\subsection{Measures on $G$}

For now on, we insist that the height of $G$ is one.
In this case, the Tate module of the dual of $G$,  $T_p G^\vee$
is isomorphic to $\Z_p$ as a $\Z_p$-module.
The 
Tate module possesses  a topology induced by the inverse limit and thus
we can consider the $R$ module 
\[
\Cont (T_p G^\vee , R)
\]
of continuous functions to $R$.

\begin{definition}[\cite{MR0441928}]
IF $S$ is a complete local algebra over $R$ then we define
the $S$-valued measures on $G$ as  
\[
M(G, S) = \Hom_R^\Cont ( \Cont ( T_p G^\vee, R), S)
\]
\end{definition}

\begin{nota}
If $\mu \in M(G,S)$ and $f : T_p G^\vee \to R$ we write
\[
\int\limits_{G} f (x) d\mu (x)
\]
for the effect of $\mu$ on the function $f$.
\end{nota}

If $l \in \Z_p$ we can consider $l \in T_p G^\vee$ using the isomorphism of Tate (Theorem \ref{tate_thm}).
\begin{example}[Dirac Measure] If $l \in \Z_p$ define a measure $\psi_l \in M(G, S)$ by
\[
\int\limits_{G} f(x) d \psi_l (x) = f(l)
\]
\end{example}
The measure $d \psi _l$ corresponds to the Adams operation $\psi^l$ under the identification in Theorem \ref{kpmeasure}.
In light of  Theorem \ref{ahrtheorem} we are interested in integration on the units.
\begin{definition}
Let
\[
\left( T_p G^\vee \right)^\times
\]
be the complement of $p T_p G^\vee$ inside $T_p G^\vee$. 
\end{definition}
The units of the Tate module is isomorphic to $\Z_p^\times$ since  $G$ has height one.
It inherits a topology from $T_p G^\vee$
and as above write
\[
M(G^\times, S) 
\]
for the $S$-valued measures on $\left(T_p G^\vee \right)^\times$.

We will be interested in the image of the natural map 
\[
\mathrm{res} : M(G^\times, S) \to M(G, S)
\]
defined by restricting the source of a  continuous function and then integrating.  

\begin{definition}[N. Katz]
A measure $\mu$ is supported on the units if it is in the image of the restriction map. 
\end{definition}

\section{Derivations and Measures}
In this section we develop the tools to calculate the moments of a measure on a formal group.

\subsection{Mahler's Theorem}
Suppose $S$ is a complete local torsion free $\Z_p$ algebra.
Write $M(\Z_p, S)$ for the module of
linear functionals $\mu : \Cont(\Z_p, \Z_p) \to S$.
\begin{theorem}[Mahler] \label{mahlerthm}
There is an isomorphism
\[
M(\Z_p, S) \to S[[t]]
\]
\end{theorem}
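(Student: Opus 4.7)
The plan is to construct the map $\mu \mapsto F_\mu(t)$ explicitly by the Amice transform
\[
F_\mu(t) := \int_{\Z_p} (1+t)^x \, d\mu(x) = \sum_{n \geq 0} \mu\!\left( \binom{x}{n} \right) t^n \in S[[t]],
\]
where the right equality comes from expanding $(1+t)^x = \sum_n \binom{x}{n} t^n$ termwise. I then need to verify both injectivity and surjectivity. The conceptual engine of the entire argument is the classical Mahler expansion, so I would quote it upfront: the binomial polynomials $\binom{x}{n}$ for $n \geq 0$ form a Banach orthonormal basis of $\Cont(\Z_p, \Z_p)$ with respect to the sup norm, and every continuous $f \colon \Z_p \to \Z_p$ has a unique convergent expansion $f(x) = \sum_{n} c_n(f) \binom{x}{n}$ with $c_n(f) \in \Z_p$ and $c_n(f) \to 0$ $p$-adically. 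One may take $c_n(f) = (\Delta^n f)(0)$ as a concrete formula.

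With Mahler's theorem in hand, injectivity of the Amice transform is immediate. If $F_\mu = 0$, then $\mu(\binom{x}{n}) = 0$ for all $n$. Writing any $f \in \Cont(\Z_p, \Z_p)$ in its Mahler expansion, continuity and $\Z_p$-linearity of $\mu$ give
\[
\mu(f) = \sum_{n \geq 0} c_n(f) \, \mu\!\left( \binom{x}{n} \right) = 0,
\]
so $\mu = 0$. For surjectivity, given $g(t) = \sum_n b_n t^n \in S[[t]]$, I define a functional by $\mu(\binom{x}{n}) := b_n$ and extend by
\[
\mu(f) := \sum_{n \geq 0} c_n(f) \, b_n \in S.
\]
This sum converges because $S$ is a $p$-adically complete local $\Z_p$-algebra whose elements lie in the unit disk, so $|c_n(f) b_n|_p \leq |c_n(f)|_p \to 0$. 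One checks $\mu$ is continuous, $\Z_p$-linear, and its Amice transform is $g$ by construction. This also shows the inverse map is well defined.

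The main obstacle is really the input, namely Mahler's theorem itself, whose proof uses the completeness of the span of $\binom{x}{n}$ under the sup norm via the finite difference operator and induction. I would not reprove it; I would cite the standard reference. Beyond that, the only subtlety to double-check is that the natural convergence in $S$ is guaranteed by the hypothesis that $S$ is a complete local torsion-free $\Z_p$-algebra — this is what lets the partial sums $\sum_{n \leq N} c_n(f) b_n$ form a Cauchy sequence in $S$, using $v_p(c_n(f)) \to \infty$.
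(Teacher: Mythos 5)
Your argument is correct and takes essentially the same route as the paper: both use Mahler's expansion in binomial coefficients and identify the measure with the power series whose $n$-th coefficient is $\mu\bigl(\binom{x}{n}\bigr)$. You simply spell out the Amice transform, injectivity, and surjectivity in more detail than the paper does.
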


\begin{proof}
The generalized binomial coefficients  
\[
 {x \choose n}   = \frac{  x (x -1) (x-2) ... (x - n + 1) }{n!}.
\]
are continuous functions from $\Z_p$ to itself.
Mahler  showed
that these functions generate all continuous functions from $\Z_p$ to itself \cite{MR0095821}.
In particular, if $f : \Z_p \to \Z_p$ is continuous then
\[
f (x) = \sum\limits_{n} a_n(f)  {x \choose n}  
\]
and the $a_n (f)$ tend to zero $p$-adically.  
To finish the proof, identify the integral of the function $ x \choose n $
with the $n$-th coefficient of the associated power series.
\end{proof}

Mahler's theorem does not easily produce the moments, $\mu(x^k)$ of a measure.
Work of Katz produces a computational method using derivations on a
formal group $G$.  We will review this work and produce a slight generalization 
of Mahler's Theorem.

\subsection{Derivations}

Suppose that $G$ is a formal group over $R$ with multiplication give by $m : G \times G \to G$. 
and ring of functions $\strg$.
Recall that $\strg \rtimes \strg$ is a commutative ring with multiplication given by
\[
(a, m) (b, n) = (ab, an + bm )
\]

\begin{definition}
A derivation on $G$ is a map of commutative rings
\[
\omega' : \strg \to \strg \rtimes \strg
\]
such that the diagram
\[
\xymatrix{
\strg \ar[d]_= \ar[r]^-{\omega'} &   \strg \rtimes \strg \ar[dl]^{p_1} \\
\strg 
}
\]
commutes.
\end{definition}
We write 
\[
\omega :  \strg \xrightarrow{\omega'}  \strg \rtimes \strg \xrightarrow{p_2} \strg
\]
for the compostion of $\omega'$ with projection onto the second factor.

\begin{definition}A derivation $\omega$ is invariant if the diagram of commutative rings
\[
\xymatrix{
\strg \ar[d]_{m^*}  \ar[r]^\omega   &  \strg   \ar[d]^{m^*} \\ 
\strg \otimes \strg \ar[r]^{\omega \otimes 1} &   \strg  \otimes \strg 
}
\]
commutes.
\end{definition}




\begin{example} Suppose $t$ is a coordinate on $G$ and $F(x,y)$ is the induced formal group.  
Write $F_2(x,y)$ for the partial derivative of $F(x,y)$ with respect to the second variable.
The map
\[
D_F = F_2 (t, 0) \frac{d}{dt} : \strg \to \strg
\]
is an invariant derivation.
\end{example}

\begin{definition} \label{betak}
If $\omega$ is an invariant derivation on $G$ define
\[
\beta^k : \strg \xrightarrow{\omega^k } \strg  \xrightarrow{0^*} R.
\]
where the exponent $k$ means compose $\omega$ with itself $k$ times.
\end{definition}

\subsection{Computing the Moments of a Measure}

If $t$ is a coordinate on $G$, elements of the Tate module of the $p$-divisible dual, 
$T_p G^\vee$,  are power series $g(t)$ over $R$ such that
$g ( x +_F y ) = g(x) g(y)$ and $g(0) = 1$. 
Given an invariant derivation  $\omega$ we can construct a  continuous function
\[
\langle \omega , - \rangle :  T_p G^\vee \to R  
\]
defined by 
\[
\langle \omega, g(t) \rangle = \omega g (t) |_{g = 0}   
\]
If $\mathrm{Diff}(G)$ is the $p$-completed module of invariant differential operators on $G$ then 
this  pairing  shows that $\strg$ is the continuous $R$-linear dual of  $\mathrm{Diff}(G)$ \cite{MR0441928}.

Let $S$ be a complet local torsion free algebra over $R$.
Hitting the map 
\[
\langle -, - \rangle \colon \mathrm{Diff}(G) \to \Cont( T_p G^\vee, R)
\]
with $\Hom_R^\Cont( -, S)$
produces a homomorphism
\[
\mathcal{D}_G \colon M(G, S) \to \strg \hat{\otimes} S 
\]

\begin{proposition} \label{diffop}
If $G$ is a height one Lubin-Tate formal group over $\Z_p$ with coordinate $t$, then  the natural map
\[
\mathcal{D}_G \colon M(G, S) \to \strg \hat{\otimes} S \cong S[[t]]  
\]
is an isomorphism.
\end{proposition}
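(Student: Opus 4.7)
The plan is to reduce, via Tate's theorem, to measures on $\Z_p$ and then invoke Mahler's theorem. Since $G$ is a height-one Lubin-Tate formal group over $R = \Z_p$, Lemma \ref{tate_thm} (together with the remark following it that the ground ring $R$ suffices in the Lubin-Tate case) identifies $T_p G^\vee$ with $\Hom_R(G, \mathbb{G}_m)$, a free $\Z_p$-module of rank one. A choice of generator is a character $g \colon G \to \mathbb{G}_m$, and every element of $T_p G^\vee$ is $g^\ell$ for a unique $\ell \in \Z_p$. This gives an isomorphism $\Cont(T_p G^\vee, R) \cong \Cont(\Z_p, R)$ of topological $R$-modules.

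Dualizing and applying Theorem \ref{mahlerthm} produces an abstract isomorphism
\[
M(G, S) = \Hom_R^\Cont(\Cont(T_p G^\vee, R), S) \cong \Hom_R^\Cont(\Cont(\Z_p, R), S) \cong S[[u]].
\]
What remains is to show that $\mathcal{D}_G$ itself realizes this isomorphism. For this I would compute the pairing $\langle \omega^k, g_\ell \rangle = (\omega^k g_\ell)(0)$ directly. Because $g_\ell$ is group-like and $\omega$ is an invariant derivation, $\omega(g_\ell) = c(\ell)\, g_\ell$ for a scalar $c(\ell)$ linear in $\ell$ (the derivative at the origin of the $[\ell]$-series pulled back along the chosen character); iterating, $\langle \omega^k, g_\ell \rangle = c(\ell)^k$ is a polynomial in $\ell$ of degree $k$. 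The $R$-span of the functions $\ell \mapsto c(\ell)^k$ as $k$ varies equals the $R$-span of the binomials $\binom{\ell}{k}$, which is dense in $\Cont(\Z_p, R)$ by Mahler. Transposing this density statement gives that $\mathcal{D}_G$ is an isomorphism onto $\strg \widehat{\otimes} S \cong S[[t]]$.

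The main obstacle is the bookkeeping in that last step: one must verify that the divided-power filtration on $\mathrm{Diff}(G)$ corresponds, under $\langle -,- \rangle$, to the Mahler filtration on $\Cont(\Z_p, R)$, so that the topological duality between $\strg$ and $\mathrm{Diff}(G)$ already recorded in \cite{MR0441928} transposes to a continuous isomorphism after base change to $S$. Granted that, the proposition reduces to matching the Tate parametrization of $T_p G^\vee$ with the binomial basis of $\Cont(\Z_p, R)$.
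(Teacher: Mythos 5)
Your approach is essentially the paper's: identify $T_p G^\vee$ with $\Z_p$ via a coordinate (what you call the generator $g$ is the series the paper denotes $F_2(t,0)$), note that iterates of the invariant derivation pair group-like elements to $\ell^k$ (here $c(\ell) = \ell$, since $[\ell]_F'(0) = \ell$ for a Lubin--Tate group), and invoke Mahler. The paper's own proof is equally terse, leaning on the duality $\strg \cong \mathrm{Diff}(G)^\vee$ quoted just before the proposition.

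One caution, though: the sentence in your middle paragraph asserting that the $R$-span of $\ell \mapsto c(\ell)^k = \ell^k$ equals the $R$-span of the binomials $\binom{\ell}{k}$ is false over $R = \Z_p$ — the monomials span a strictly smaller $\Z_p$-module (e.g.\ $\binom{\ell}{p}$ is not a $\Z_p$-linear combination of powers of $\ell$, and this persists after $p$-completion). What actually makes Mahler applicable is that $\mathrm{Diff}(G)$, as the $p$-completed distribution algebra, contains the divided powers $\binom{D_F}{n}$, and these pair with $g^\ell$ to produce exactly $\binom{\ell}{n}$. Your closing paragraph correctly identifies this (matching the divided-power filtration with the Mahler filtration) as the point to verify, but the middle paragraph reads as though it were already disposed of; as written those two paragraphs are in tension. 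Fixing it amounts to replacing ``$R$-span of $\ell^k$'' with ``$R$-span of $\binom{\ell}{n}$, the image of the divided powers,'' after which your transposition step goes through.
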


\begin{proof}
If $t$ is a coordinate on $G$ then $a \in \Z_p$ can be identified with $F_2 (t, 0)^a \in T_p G^\vee$.
Thus the continuous function 
\[
 \langle D_F ^k , - \rangle  \colon T_p G^\vee \to \Z_p
\]
corresponds to the function
\[
x^k \colon \Z_p \to \Z_p.
\]
The result now follows from Theorem \ref{mahlerthm}.
\end{proof}

\begin{corollary} \label{mahlerderv}
Under the correspondence 
\[
\mu \mapsto f_\mu 
\]
of Proposition \ref{diffop}
we have the formula 
\[
\int\limits_{G} x^k  d \mu (x) = \beta^k f_\mu
\]
where the derivation we are using to define $\beta$ is $D_F$.
\end{corollary}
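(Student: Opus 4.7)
The plan is to simply unwind the definitions that built up $\mathcal{D}_G$. By construction in Proposition \ref{diffop}, the map $\mathcal{D}_G$ is obtained by applying $\Hom_R^{\Cont}(-, S)$ to the pairing
\[
\langle -, - \rangle \colon \mathrm{Diff}(G) \to \Cont(T_p G^\vee, R),
\]
using the identification of $\strg$ with the continuous $R$-linear dual of $\mathrm{Diff}(G)$ via the evaluation $(D, f) \mapsto (Df)(0)$. So for $\mu \in M(G,S)$ the associated power series $f_\mu = \mathcal{D}_G(\mu) \in S[[t]]$ is characterized by the compatibility
\[
(D f_\mu)(0) \;=\; \int_G \langle D, x \rangle\, d\mu(x)
\]
for every invariant differential operator $D \in \mathrm{Diff}(G)$.

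The first step is to specialize this identity to $D = D_F^k$. By Definition \ref{betak} of $\beta^k$ (with derivation $\omega = D_F$), the left-hand side is literally $\beta^k f_\mu$. The second step is to identify the continuous function $\langle D_F^k, - \rangle$ on $T_p G^\vee$ with the monomial $x^k$ on $\Z_p$. This is exactly what was checked inside the proof of Proposition \ref{diffop}: the Tate-module element $a \in \Z_p$ corresponds to the character $F_2(t,0)^a \in T_p G^\vee$, and evaluating $D_F^k$ on this character and setting $t=0$ yields $a^k$. Combining these two observations gives
\[
\beta^k f_\mu \;=\; (D_F^k f_\mu)(0) \;=\; \int_G \langle D_F^k, x \rangle\, d\mu(x) \;=\; \int_G x^k \, d\mu(x),
\]
which is the claimed formula.

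There is essentially no hard step here; the corollary is a direct specialization of Proposition \ref{diffop} to the distinguished operators $D_F^k$. The only point requiring some care is a bookkeeping issue: one must be consistent about the two avatars of the duality (the pairing $\mathrm{Diff}(G)$ with $T_p G^\vee$ used to produce continuous functions, versus the pairing $\mathrm{Diff}(G)$ with $\strg$ used to identify $\strg$ with the continuous dual of $\mathrm{Diff}(G)$). Both pairings are given by the same formula $(D,f) \mapsto (Df)(0)$, and checking that they are compatibly dualized is what makes the chain of equalities above legitimate.
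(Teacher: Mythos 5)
Your proof is correct, and it takes a genuinely different route from the paper. The paper proves the corollary by an explicit computation: it writes $x^k = \sum_{m=0}^{k} c_{m,k}\binom{x}{m}$, works with the coordinate $s = u-1$ on $\widehat{\mathbb{G}}_m$ where $D = (1+s)\frac{d}{ds}$, checks directly that $D^k s^m|_{s=0} = c_{m,k}$, and then transfers the identity to a general height-one Lubin--Tate group by pulling back the derivation along an isomorphism $\Theta\colon G \to \widehat{\mathbb{G}}_m$ (the ``change of variables'' $\beta^k_F f(t) = \beta^k_{\widehat{\mathbb{G}}_m} f(\Theta(s))$). You instead unwind the duality construction of $\mathcal{D}_G$: since $f_\mu$ is by definition the functional $D \mapsto \mu(\langle D,-\rangle)$ transported along the identification $\strg \cong \Hom_R^{\Cont}(\mathrm{Diff}(G),R)$, the formula is immediate once one specializes $D = D_F^k$ and invokes the identification $\langle D_F^k,-\rangle \leftrightarrow x^k$ already recorded inside the proof of Proposition \ref{diffop}. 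Your approach is cleaner and entirely coordinate-free, with no binomial-coefficient bookkeeping and no separate pull-back step; the paper's approach is more concrete and makes the identity verifiable by hand against Mahler's basis. Both hinge on the same nontrivial input (that $\langle D_F^k, g_a\rangle = a^k$ for the character associated to $a \in \Z_p$), which you correctly flag as coming from Proposition \ref{diffop}, and your attention to the compatibility of the two occurrences of the evaluation pairing $(D,f) \mapsto (Df)(0)$ is exactly the right thing to worry about.
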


\begin{proof}
We can calculate.
Define $c_{m , k} $ by
\[
x^k = \sum\limits_{m = 0}^k c_{m, k } {x \choose m} 
\]
If $s$ is the coordinate on $\G_m = \spf \Z_p[u, u^{-1}]$ given by $u-1 $ then $D = (1 + s) \frac{d}{ds}$.  
It is easy to show that
\[
D^k x^m = c_{m , k}
\]
for $k \leq m$.
We can obtain the result for any height one Lubin-Tate group by pulling back the derivation $D$ along
an isomorphism to $\G_m$.  Explicitly, if $\Theta : G \to \G_m$ then pulling back induces
the formula
\[
\beta^k_F f (t) = \beta^k_{\G_m} f( \Theta(s)).
\]
\end{proof}

\begin{remark}
If $G$ produces a  Landweber exact theory $E$ with $\pi_0 E = R$, this statement can  be realized in topology by writing down the paring between
$E^0 \CP^\infty $ and $E^\vee_0 \CP^\infty$.  The later is the ring of invariant derivations on $G$ 
generated by $D_F^k$.
\end{remark}



\begin{definition}
If $\mu$ is a $S$-valued measure on $G$ then the sequence $ \beta^k ( f_\mu )$ for $k \geq 0$ is the moments of the measure $\mu$.
\end{definition}

The calculation  in \ref{mahlerderv} depends on a coordinate, however the isomorphism is natural in the following sense.
\begin{lemma} \label{d_compatible}
Suppose $s$ and $t$ are coordinates on the formal group $G$.  Then the diagram
\[
\xymatrix{
M(G, S) \ar[r] \ar[d]^=  &   S[[s]]  \ar@{-->}[d]^\Theta \\
M(G, S)  \ar[r] & S[[t]]
}
\]
commutes.
\end{lemma}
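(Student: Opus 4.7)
The plan is to recognize that the isomorphism $\mathcal{D}_G$ of Proposition \ref{diffop} has a coordinate-free refinement, and that a coordinate only enters when one identifies the intrinsic ring $\strg$ with a concrete power series ring. Indeed $\mathcal{D}_G$ is obtained by applying $\Hom^{\Cont}_R(-, S)$ to the pairing $\langle -, -\rangle \colon \mathrm{Diff}(G) \to \Cont(T_p G^\vee, R)$, and both the module of invariant differential operators and the Tate module of the dual are intrinsic to the formal group. Consequently there is a canonical map $M(G, S) \to \strg \hat{\otimes} S$ through which both horizontal arrows of the diagram factor.

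To name the dashed arrow $\Theta$, let $\phi(s) \in R[[s]]$ be the change-of-coordinate series with $t = \phi(s)$, $\phi(0) = 0$ and $\phi'(0) \in R^\times$, and let $\psi \in R[[t]]$ be its compositional inverse. The two identifications $\strg \cong R[[s]]$ and $\strg \cong R[[t]]$ record the same abstract function in different variables, so an element represented by $g(s)$ in the $s$-trivialization is represented by $g(\psi(t))$ in the $t$-trivialization. Extending $S$-linearly, set
\[
\Theta \colon S[[s]] \to S[[t]], \qquad g(s) \longmapsto g(\psi(t)).
\]
With this choice the commutativity of the diagram is built in: both horizontal arrows equal the intrinsic $\mathcal{D}_G$ followed by one of the two trivializations of $\strg \hat{\otimes} S$, and $\Theta$ is precisely the comparison of these trivializations.

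The main obstacle is a bookkeeping verification that the pairing $\langle -, -\rangle$ really is coordinate-free, since the invariant derivation $D_F$ of Definition \ref{betak} is manifestly coordinate-dependent. Specifically, one has to check that $D_F$ (built from $t$) and $D_{F'}$ (built from $s$) represent proportional elements of $\mathrm{Diff}(G)$, and that when the pairing is evaluated through either trivialization of $\strg$ the resulting power series differ by $\Theta$. Both facts follow from a routine change-of-variables calculation based on $\frac{d}{dt} = \phi'(s)^{-1}\,\frac{d}{ds}$ and the identity $F'_2(s, 0) = \phi'(0)\,\phi'(s)^{-1}\, F_2(\phi(s), 0)$. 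No new ideas are required beyond those already used at the end of the proof of Corollary \ref{mahlerderv}, where the same substitution trick transports moment calculations between $G$ and $\G_m$; the present lemma is simply the assertion that this phenomenon has nothing to do with $\G_m$ and holds for any pair of coordinates on $G$.
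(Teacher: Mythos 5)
Your proposal is correct and matches the paper's approach: the paper's one-line proof simply identifies $\Theta$ as the (strict) isomorphism relating the two coordinate-induced formal group laws, and leaves implicit exactly what you spell out — namely that both horizontal arrows factor through the intrinsic $\mathcal{D}_G \colon M(G,S) \to \strg \hat{\otimes} S$ and $\Theta$ is just the comparison of the two trivializations of $\strg$. Your extra paragraph on $D_F$ versus $D_{F'}$ (including the identity $F'_2(s,0) = \phi'(0)\phi'(s)^{-1}F_2(\phi(s),0)$) is a sound and welcome elaboration, even though it is not strictly needed once coordinate-independence of the pairing on $\mathrm{Diff}(G)$ is acknowledged.
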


\begin{proof}
The coordinates $s$ and $t$ determine a pair of isomorphic formal group laws over $R$.  The associated strict isomorphism,
say $\Theta$, is the right hand vertical map in the above diagram.
\end{proof}

\section{A Theorem of Katz}

We are interested in the image of the restriction map
\[
\mathrm{res} \colon M(G^\times, S) \to M(G, S).
\]

\begin{theorem}[N. Katz] \label{katzthm}
Let $R$ be a complete local noetherian domain with residue characteristic $p$.
Suppose $G$ is a formal group of height \underline{one} over $R$ and $t$ is a coordinate on $G$.
Let $S$ be a complete  local $R$-algebra.
An $S$-valued  measure $\mu \in M( G , S)$ is supported on the units if and only if
$f_\mu \in S[[t]]$
satisfies
\[
Trace_G f = T_G f := \sum\limits_{c \in F[p] }  f_\mu (t +_F c)  = 0
\]
\end{theorem}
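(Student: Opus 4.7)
The plan is to interpret $f_\mu$ as a Fourier-type integral of characters and then apply character orthogonality on the finite subgroup $F[p]$.

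First, by Tate's theorem (Lemma \ref{tate_thm}), each $x \in T_p G^\vee$ may be viewed as a character $g_x(t)$ satisfying $g_x(a +_F b) = g_x(a)\, g_x(b)$ and $g_x(0) = 1$. I claim that under the isomorphism $\mathcal{D}_G$ from Proposition \ref{diffop}, the power series $f_\mu$ admits the Fourier-type representation
\[
f_\mu(t) = \int_G g_x(t) \, d\mu(x).
\]
To verify this, I pull back along an isomorphism $\Theta : G \to \widehat{\mathbb{G}}_m$ (using Lemma \ref{d_compatible}, enlarging scalars to $R_C$ if required) and check that both sides produce the same moments: applying $\beta^k$ to the right-hand side gives $\int_G \langle D_F^k, g_x \rangle \, d\mu(x) = \int_G x^k \, d\mu(x)$, which matches Corollary \ref{mahlerderv}.

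Second, I use the character property $g_x(t +_F c) = g_x(t)\, g_x(c)$ to unwind the trace:
\[
T_G f_\mu(t) \;=\; \sum_{c \in F[p]} \int_G g_x(t +_F c) \, d\mu(x) \;=\; \int_G g_x(t) \left( \sum_{c \in F[p]} g_x(c) \right) d\mu(x).
\]
The inner sum is the image in $R$ of $g_x(t) \in R[t]/([p]_F(t))$ under the trace of this finite free extension. Because $G$ has height one, both $F[p]$ and $T_p G^\vee / p\, T_p G^\vee$ are cyclic of order $p$, and the pairing $(c, x) \mapsto g_x(c)$ landing in the $p$-th roots of unity is the Cartier/Weil pairing, hence non-degenerate. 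After enlarging scalars to a ring containing these roots of unity, ordinary character orthogonality gives
\[
\sum_{c \in F[p]} g_x(c) \;=\; \begin{cases} p & \text{if } x \in p\, T_p G^\vee, \\ 0 & \text{otherwise,} \end{cases}
\]
and the identity descends to $R$ by Galois-invariance of the trace.

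Combining the two calculations produces
\[
T_G f_\mu(t) \;=\; p \int_{p\, T_p G^\vee} g_x(t) \, d\mu(x).
\]
Since $S$ is $p$-torsion-free (the standing hypothesis of Section 5), this expression vanishes if and only if the restriction of $\mu$ to $p\, T_p G^\vee$ is zero, which is precisely the statement that $\mu$ lies in the image of $\mathrm{res}$, i.e., is supported on the units. The step I expect to be the main obstacle is the Fourier identity $f_\mu = \int g_x \, d\mu$: it is essentially a repackaging of Corollary \ref{mahlerderv}, but threading together the Tate-duality identification of $T_p G^\vee$ with characters and the Mahler-basis identification compatibly over $R$ (rather than only over $R_C$) requires some care.
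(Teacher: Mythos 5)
Your proof is correct, and it is a genuinely cleaner packaging of what the paper does. The paper also reduces to $\G_m$ via $\Theta$ (Theorem~\ref{generalkatz}) and also exploits the identity $\sum_{j=0}^{p-1}\zeta^{jx}=p\cdot\mathbf{1}_{p\Z_p}(x)$, but it does so coefficient-by-coefficient: the ``if'' direction of Lemma~\ref{easykatz} notes that the coefficient of $t^l$ in $T_{\G_m}f_\mu$ is $\int_G\bigl(\sum_j\zeta^{jx}\bigr)\binom{x}{l}\,d\mu$, an integral of a function vanishing on units, while the ``only if'' direction is routed through the Riemann-sum description of Proposition~\ref{intform} and a termwise comparison of the $a_i(n)$. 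By recognising $f_\mu(t)=\int_G g_x(t)\,d\mu(x)$ as a Fourier integral of characters (which is exactly Mahler's theorem rewritten, since $g_x(t)=(1+\Theta^{-1}(t))^x=\sum_k\binom{x}{k}\Theta^{-1}(t)^k$) and then applying character orthogonality on $F[p]$, you get the single clean identity
\[
T_G f_\mu(t)\;=\;p\int_G \mathbf{1}_{pT_pG^\vee}(x)\,g_x(t)\,d\mu(x),
\]
from which both directions fall out at once via $p$-torsion-freeness of $S$ and the fact that $\mathcal{D}_G$ is an isomorphism (Proposition~\ref{diffop}); you never need the Riemann-sum apparatus. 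The one place you should be more explicit is the final equivalence: ``the right-hand power series vanishes $\Leftrightarrow$ $\mu$ is supported on the units'' rests on the observation that the functions $\mathbf{1}_{pT_pG^\vee}(x)\binom{x}{k}$ topologically span the continuous functions supported on $pT_pG^\vee$ (since $(T_pG^\vee)^\times$ is clopen, multiplication by $\mathbf{1}_{pT_pG^\vee}$ is continuous and one can multiply the Mahler expansion termwise), so that killing all of them is the same as killing everything supported off the units; and that in turn is equivalent to lying in the image of $\mathrm{res}$. You gesture at this (``repackaging of Corollary~\ref{mahlerderv}'') but it deserves a sentence. With that added, your proof is complete and, in my view, more transparent than the one in the text.
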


This statement, without proof, can be found in  \cite{MR0441928} where it is also claimed to be true
when $G$ is the formal group associated to a supersingular elliptic curve.
We will verify only the part of the  theorem we need for our purposes.  Namely when $G$ is a height one Lubin-Tate
group over $\Z_p$.  As a warm up, we check the result for $G = \G_m$ with a specified coordinate.  We  obtain
the general statement using deformation theory.


In Proposition \ref{effectcp} we show
that the effect of a cohomology operation on the tautological line bundle $\tline$ over $\CP^\infty$ is given by $f_\mu$.
This calculation allows us to show that Theorem \ref{katzthm} is equivalent to the Madsen, Snaith and Tornehave 
result describing the image of
\[
\Omega^\infty : [K_p, K_p] \to [\Omega^\infty K_p, \Omega^\infty K_p].
\] 
In other words,

\begin{corollary} 
There is a commutative diagram
\[
\xymatrix{
M( \G ^\times_m , \Z_p) \ar[r]^{\mathrm{res}}  \ar[d] &   M(\G_m, \Z_p) \ar[d] \\
[K_p, K_p] \ar[r]_-{\Omega^\infty}   &  [\Omega^\infty  K_p, \Omega^\infty  K_p].
}
\]
\end{corollary}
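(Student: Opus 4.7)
The plan is to identify the two vertical maps as measure-theoretic reformulations of K-theory operations, then to match the two images under the horizontal maps using Theorem \ref{katzthm} on one side and the Madsen--Snaith--Tornehave description on the other.

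First I would construct the left vertical arrow. Theorem \ref{kpmeasure} provides an isomorphism $M(\Z_p^\times, \Z_p) \cong [K_p, K_p]$. The Tate isomorphism (Lemma \ref{tate_thm}) applied to $\mathbb{G}_m$, together with the fact that $\mathbb{G}_m$ is the Lubin--Tate group of height one at $p$, identifies $(T_p \mathbb{G}_m^\vee)^\times$ with $\Z_p^\times$; this promotes the Adams--Harris--Switzer statement to an identification $M(\mathbb{G}_m^\times, \Z_p) \cong [K_p, K_p]$, which is the left vertical arrow.

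Next I would construct the right vertical arrow. By Proposition \ref{diffop} applied to $G = \mathbb{G}_m$ with the standard coordinate $s = u - 1$, we have $M(\mathbb{G}_m, \Z_p) \cong \Z_p[[s]]$ via $\mu \mapsto f_\mu$. Proposition \ref{effectcp} (referenced in the excerpt) identifies $f_\mu$ as the effect of the corresponding operation on the tautological line bundle $\tline \in K_p^0 \CP^\infty$. An element of $[K_p, K_p]$ lies in the image of $\Omega^\infty$ precisely when it defines a natural transformation of the functor $X \mapsto K_p^0 X$ for all spaces $X$, and the Madsen--Snaith--Tornehave theorem characterizes this image by an integrality condition on the power series that records the action on $\CP^\infty$. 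This gives the right vertical arrow and identifies its image with a distinguished submodule of $\Z_p[[s]]$.

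For commutativity and the matching of images, I would start with $\mu \in M(\mathbb{G}_m^\times, \Z_p)$, restrict it to $\mathrm{res}(\mu) \in M(\mathbb{G}_m, \Z_p)$, and observe that passing through the left vertical arrow and then through $\Omega^\infty$ yields the operation on $K_p^0(\CP^\infty)$ whose effect on $\tline$ is computed by integrating the appropriate characters against $\mathrm{res}(\mu)$; via Corollary \ref{mahlerderv} this is exactly $f_{\mathrm{res}(\mu)}(s)$, which is what the right vertical arrow produces. This verifies that the diagram commutes on elements. Theorem \ref{katzthm} then asserts that $f_\mu$ for $\mu$ supported on the units is characterized by $T_{\mathbb{G}_m} f_\mu = 0$, and the comparison with the Madsen--Snaith--Tornehave integrality condition completes the identification of the two images.

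The main obstacle will be the last matching step: showing that the trace-vanishing condition $\sum_{c \in \mathbb{G}_m[p]} f_\mu(s +_{\mathbb{G}_m} c) = 0$ from Katz coincides with the integrality condition cut out by Madsen--Snaith--Tornehave on the coefficients describing the action on $\tline$. This requires unwinding both conditions explicitly in the coordinate $s = u - 1$, where the $p$-torsion of $\mathbb{G}_m$ consists of the $p$-th roots of unity, and translating the cohomological integrality statement into the vanishing of a sum over these roots. Once this dictionary is in place the commutativity of the square is formal.
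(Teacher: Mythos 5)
Your proposal follows essentially the same route as the paper: the corollary is a restatement of Theorem \ref{MST+KATZ}, and the paper establishes commutativity by tracing both paths to their effect on the tautological line bundle in $K_p^0\CP^\infty$, exactly as you describe, using Proposition \ref{diffop}, Corollary \ref{mahlerderv}, the Riemann-sum description of Proposition \ref{intform}, and Adams' identification $\epsilon \colon \hat{A}_p \cong K_p^0\CP^\infty$. One clarification: what you flag as the ``main obstacle''---matching the Katz trace-vanishing condition with the Madsen--Snaith--Tornehave characterization---is a consequence rather than a step in proving commutativity; once the square commutes and the vertical arrows are isomorphisms, the images of $\mathrm{res}$ and of $\Omega^\infty$ correspond automatically, which is precisely how the paper deduces the equivalence of Theorem \ref{katzthm} with the Madsen--Snaith--Tornehave result from this corollary, rather than the other way around.
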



In order to prove Theorem \ref{katzthm} we will describe a measure on $G$ in terms of Riemann sums.  

\subsection{Measures on $\Gamma_n$ }


Recall that $\Gamma_n$ is the kenrel
of multiplication by $p$ on $G$.
Define $M(\Gamma_n, S)$ as  the $S$ module of linear functionals
\[
\Hom ( \maps ( T_p \Gamma_n^\vee , R) , S) = \Hom ( \maps ( \Gamma_n^\vee, R), S).
\]
If $\mu \in M(\Gamma_n, S)$ we will often write 
\[
\int\limits_{\Gamma_n } f (x) d \mu(x)
\]
for the effect of $\mu$ on the function $f$.

A coordinate $t$ on $G$ determines an isomorphism
\[
M(\Gamma_n, S) \cong \Hom ( \maps (\Z/p^n, R ), S). 
\]
and there is an isomorphism 
\[
M ( \Gamma_n, S ) \cong S [ \Z /p^k ] \cong S [\gamma] / \gamma^{p^n} - 1.
\]
If $\chi_i$ is the characteristic function at $i$ then 
\[
\mu \mapsto \sum\limits_{i = o}^{p^n -1 }  \left( \int\limits_{\Gamma_n } \chi_i (x) d \mu (x) \right) \gamma^i 
\]

Hitting the inverse system
\[
... \to   \Gamma_n \to \Gamma_{n - 1} \to ...
\]
with the functor $M( - , S)$  produces  the inverse system
\[
... \to M( \Gamma_n , S) \to M( \Gamma_{n - 1} , S) \to ....
\]
Each map  sends a measure $\mu$ on $\Gamma_n$  to the measure $\bar{\mu}$ defined by
\[
\int\limits_{\Gamma_{n - 1}}  f d \bar{\mu} := \int\limits_{\Gamma_n} \bar{f} d \mu. 
\]
Here $\bar{f}$ is the composite of  $f$ with the natural projection:
\[
\xymatrix{
\Z / p^{n-1}   \ar[r]^f &   \Z_p \\
\Z/p^n \ar[u] \ar[ru]_{\bar{f} }.  & 
}
\]

For example, if $a \in \Z/ p^n$ the  Dirac measure $d \psi_a $ is mapped to $d \psi_{\bar{a}}$ where $\bar{a}$ is the 
class represented by the reduction of $a$ modulo $p^{n-1}$.
In general we have 
\[
\sum\limits_{i = 0 }^ {p^n - 1} a_i (n) d \psi_i \mapsto \sum\limits_{i = 0 }^ {p^n - 1} a_i (n) d \psi_{\bar{i}}.
\]
However, each class modulo $p^{n-1}$ gets hit precisely $p$ times.  Collecting terms, we can rewrite this sum as
\[
\sum\limits_{j = 0 }^ {p^{n - 1} - 1}           \sum\limits_{l = 0}^{p-1}      a_{i + l p^{n-1}} (n)               d \psi_j.
\]
In other words, we have the formula
\begin{eqnarray} \label{aik-formula}
a_i ( n - 1 ) = \sum\limits_{l = 0}^{p-1} a_{i + l p^{n-1} } ( n).
\end{eqnarray}

\subsection{Density of Dirac Measures}


Recall from our discussion on Tate modules that 
$G$ is isomorphic to its associated $p$-divisible group.  
Putting this together with the discussion of the last section we have
\begin{lemma}
There is an isomorphism
\begin{eqnarray} \label{mudecomp}
M(G, S) \cong \lim\limits_{\leftarrow} M ( \Gamma_n , S).
\end{eqnarray}
\end{lemma}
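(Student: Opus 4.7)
The strategy is to dualize the structural identification, already invoked earlier in the paper, of $G$ with the colimit of its $p$-divisible group $(\Gamma_n, i_n)$. Dually one gets $T_p G^\vee \cong \lim_n \Gamma_n^\vee$ as a profinite $\Z_p$-module; since the height is one, each $\Gamma_n^\vee$ is finite (and a choice of coordinate identifies the inverse system with $\cdots \to \Z/p^{n+1} \to \Z/p^n \to \cdots$). The lemma should then fall out of the fact that continuous $R$-linear duality into the complete local algebra $S$ turns a filtered colimit into an inverse limit.

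The first step is to write out the comparison on the side of functions. For a profinite set $X = \lim_n X_n$ with each $X_n$ finite and $R$ a complete local noetherian ring, I would check that the natural inclusion
\[
\colim_n \maps(X_n, R) \hookrightarrow \Cont(X, R)
\]
has dense image with respect to the $p$-adic topology on $R$; this is standard, since any continuous function from a profinite set into a complete discrete-ish target is a uniform limit of locally constant functions. Applying this to $X = T_p G^\vee$ and $X_n = \Gamma_n^\vee$ identifies $\Cont(T_p G^\vee, R)$ with the $p$-adic completion of $\colim_n \maps(\Gamma_n^\vee, R)$.

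The second step is to dualize. Applying the continuous $R$-linear functor $\Hom_R^{\Cont}(-, S)$ converts the (completed) colimit into an inverse limit,
\[
M(G, S) = \Hom_R^{\Cont}\bigl(\Cont(T_p G^\vee, R),\, S\bigr) \;\cong\; \lim_n \Hom_R\bigl(\maps(\Gamma_n^\vee, R),\, S\bigr) \;=\; \lim_n M(\Gamma_n, S),
\]
and I would then verify that the transition maps of this inverse system agree with the maps $M(\Gamma_n, S) \to M(\Gamma_{n-1}, S)$ induced by $i_n$ and spelled out via the formula $\int_{\Gamma_{n-1}} f\, d\bar\mu = \int_{\Gamma_n} \bar f\, d\mu$. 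This compatibility is really just the assertion that the formula is precisely the $S$-dual of the pullback map $\maps(\Gamma_{n-1}^\vee, R) \to \maps(\Gamma_n^\vee, R)$, so the check is formal once the identifications are set up.

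The main obstacle is controlling the completion: one must argue that a continuous $R$-linear functional on $\Cont(T_p G^\vee, R)$ is faithfully recovered by, and freely extends from, its restrictions to the finite pieces $\maps(\Gamma_n^\vee, R)$. Uniqueness follows at once from the density statement above. For existence, given a compatible family $(\mu_n) \in \lim_n M(\Gamma_n, S)$, I would show that for a continuous $f \in \Cont(T_p G^\vee, R)$ and any sequence $f_n \in \maps(\Gamma_n^\vee, R)$ converging $p$-adically to $f$, the values $\mu_n(f_n) \in S$ form a Cauchy sequence whose limit depends only on $f$; completeness of $S$ then yields a well-defined functional $\mu$ restricting to each $\mu_n$. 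The Cauchy estimate uses that $\mu_n(p^N g) \in p^N S$ for any $g \in \maps(\Gamma_n^\vee, R)$, which is just $S$-linearity, so in the end no analytic input beyond completeness of $S$ is needed.
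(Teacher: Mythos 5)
Your proof is correct and follows the same underlying approach as the paper: dualize the identification of $G$ with the colimit of its $p$-power torsion so that the contravariant functor $M(-,S)$ converts the colimit into an inverse limit. The paper's own proof is essentially a one-line assertion,
\[
M(G,S) = M\bigl(\lim_{\rightarrow}\Gamma_n, S\bigr) = \lim_{\leftarrow} M(\Gamma_n,S),
\]
and you have unpacked exactly the work it suppresses --- the passage from $G = \colim \Gamma_n$ to $T_p G^\vee = \lim \Gamma_n^\vee$, the density of locally constant functions in $\Cont(T_pG^\vee,R)$, and the Cauchy extension argument needed because $\Cont(T_pG^\vee,R)$ is a completion rather than a literal colimit. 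So this is not a different route, just a careful and correct fleshing out of the same one.
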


\begin{proof} Observe 
\begin{eqnarray*}
M(   G,   S)    =   M  (\lim\limits_{\rightarrow} \Gamma_n , S) = \lim\limits_{\leftarrow} M (\Gamma_n , S).
\end{eqnarray*}
\end{proof}





We are now in a position to describe an analytic version of integration on $G$.
Suppose  $\mu$  is in $M(G, S)$.  The previous lemma permits us to write  $\mu$ 
uniquely as a sequence
\[
\left\{ s_n = \sum\limits_{i=0}^{p^n - 1} a_i(k) d \psi_i \right\}
\]
using the isomorphism in (\ref{mudecomp}).
A statement of the following  form can be found in \cite{MR0466081}.

\begin{proposition} \label{intform} 
With the above  notation, if  $f \colon T_pG^\vee \to R$ is any continuous function
then
\[
\int\limits_{G} f(x) d\mu(x) =
\lim\limits_{n \rightarrow \infty} \sum\limits_{i = 0 }^{p^n - 1} a_i(n) f( [\gamma]^i)
\]
\end{proposition}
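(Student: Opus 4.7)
The plan is to reduce Proposition \ref{intform} to the defining property of the isomorphism $M(G,S) \cong \varprojlim M(\Gamma_n, S)$ on locally constant functions, and then to pass to the limit using the compactness of $T_p G^\vee$ and the continuity of $\mu$.

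First I would unpack how the isomorphism interacts with continuous functions. Since $G^\vee = \varinjlim \Gamma_n^\vee$, we have $T_p G^\vee = \varprojlim \Gamma_n^\vee$, and $\Cont(T_p G^\vee, R)$ is the $p$-adic completion of the directed union $\bigcup_n \maps(\Gamma_n^\vee, R)$ pulled back along the projections $\mathrm{pr}_n \colon T_p G^\vee \to \Gamma_n^\vee$. Unravelling the construction, the component $s_n$ of $\mu$ is precisely the restriction of $\mu$ to functions of the form $h \circ \mathrm{pr}_n$ with $h \colon \Gamma_n^\vee \to R$. Thus, using the chosen coordinate to identify $\Gamma_n^\vee$ with $\Z/p^n$ so that $[\gamma]^i$ corresponds to $i \in \Z/p^n$, the Dirac decomposition $s_n = \sum a_i(n) d\psi_i$ gives
\[
\int_G (h \circ \mathrm{pr}_n) \, d\mu \;=\; \int_{\Gamma_n} h \, d s_n \;=\; \sum_{i=0}^{p^n-1} a_i(n)\, h([\gamma]^i).
\]

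Second, given a general continuous $f \colon T_p G^\vee \to R$, I would build a locally constant approximation $f_n$ by composing $\mathrm{pr}_n$ with the function $i \mapsto f([\gamma]^i)$ on $\Gamma_n^\vee$, viewing the $[\gamma]^i$ as a fixed system of representatives for the cosets of $\ker(\mathrm{pr}_n)$ in $T_p G^\vee$. Because $T_p G^\vee \cong \Z_p$ is compact, $f$ is uniformly continuous, so $f_n \to f$ in $\Cont(T_p G^\vee, R)$ in the sup norm. Applying the identity from the previous paragraph to $h(i) = f([\gamma]^i)$ yields
\[
\int_G f_n \, d\mu \;=\; \sum_{i=0}^{p^n-1} a_i(n)\, f([\gamma]^i).
\]

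Finally, continuity of $\mu$ on $\Cont(T_p G^\vee, R)$ gives $\int_G f_n \, d\mu \to \int_G f \, d\mu$, which is the stated formula. The main technical obstacle is coherence: I have to check that the coordinate-dependent identifications $\Gamma_n^\vee \cong \Z/p^n$ are compatible with the transition maps $\Gamma_n^\vee \to \Gamma_{n-1}^\vee$, so that choosing the representatives $[\gamma]^i$ at each level is consistent with the restriction relation \eqref{aik-formula} between the sequences $\{a_i(n)\}$. Once a single compatible system of coordinates is fixed, the density of $p^n$-torsion points together with uniform continuity makes the passage to the limit essentially formal.
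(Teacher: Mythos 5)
Your proof is correct, but it takes a genuinely different route from the one in the paper. You argue on the \emph{function} side: you build locally constant approximations $f_n = \bigl(i \mapsto f([\gamma]^i)\bigr) \circ \mathrm{pr}_n$, identify $\int_G f_n\, d\mu$ with the $n$-th partial sum by unwinding what the component $s_n$ of $\mu$ means under $M(G,S)\cong\varprojlim M(\Gamma_n,S)$, and then invoke continuity of $\mu$ together with $f_n\to f$ (uniform continuity on the compact $T_pG^\vee$). The paper argues on the \emph{measure} side in two separate steps: it first proves the partial sums form a Cauchy sequence by combining the relation $a_i(n)=\sum_{l=0}^{p-1}a_{i+lp^n}(n+1)$ with uniform continuity of $f$, and then, in a second step, defines an endomorphism $\rho$ of $M(G,S)$ sending a measure to the linear functional given by the limit, and shows $\rho=\mathrm{id}$ by checking it on Dirac measures and on each finite quotient $M(\Z/p^n,S)$. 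Your version collapses the two steps into one, because convergence of $\mu(f_n)$ is automatic once $\mu$ is continuous and $f_n\to f$; it is arguably cleaner and more conceptual. What the paper's dual argument buys is a more elementary, hands-on verification that does not foreground the topology on $\Cont(T_pG^\vee,R)$, at the cost of proving convergence and identification separately. The coherence worry you raise at the end is legitimate but is exactly what the transition formula relating $a_i(n)$ and $a_i(n-1)$ encodes; in the paper it is built into the inverse system $M(\Gamma_n,S)$ from the start, so no extra check is needed once the identifications $\Gamma_n^\vee\cong\Z/p^n$ are made compatibly with the structure maps.
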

\begin{proof}

We begin by showing that the limit converges. 
To see this recall that the $a_i(n)$ and $a_j(n + 1)$ are
related; we have the formula
\[
a_i(n) = \sum\limits_{l = 0}^{p-1} a_{i + lp^n} (n + 1)
\]
from line (\ref{aik-formula}).

If we rewrite the $n$-th term in the limit using this formula we obtain
\begin{eqnarray} \label{kthterm}
\sum\limits_{i = 0}^{p^n - 1} \sum\limits_{l = 0}^{p-1}
a_{i + lp^n} (n + 1) f(i)
\end{eqnarray}
We need to compare this sum with
\begin{eqnarray} \label{kplus1term}
\sum\limits_{j = 0}^{p^{n+1} - 1} a_j(n + 1) f(j).
\end{eqnarray}
We can rewrite line (\ref{kplus1term}) as
\[
\sum\limits_{i = 0 }^{ p^n - 1 } \sum\limits_{l = 0}^{p-1}
a_{ i + l p^n } (n + 1) f(i + l p^n).
\]

There are the same number of terms in this new expression and in (\ref{kthterm}). Comparing
the coefficient
of $a_{i + lp^n}(n+1)$ shows that we  need to compare
$f(i)$ and $f(i + lp^n)$.  Since $f$ was uniformly continuous 
($R$ is compact) these two function
values are uniformly bounded, thus the difference of the sums (\ref{kthterm}) 
and (\ref{kplus1term}) is small uniformly  and so the limit must exist.


To finish we need to show that this limit computes
the value of the linear functional $\mu$.  To this end, we investigate
the map, call it $\rho$, that sends the sequence 
\[
\left\{ s_n = \sum\limits_{i = 0}^{p^n -1} a_i(n) (t+1)^i \right\}
\]
associated to the measure $\mu$ to the linear functional $\mu'$ defined
by
\[
\int\limits_{G} f(t) d\mu' = 
\lim\limits_{k \rightarrow \infty} \sum\limits_{i = 0}^{p^n -1} a_i(n) f(i).
\]
We will show that $\rho$ is the identity.



The function $\rho$ is a $S$ module map and is the identity 
on the Dirac measures.
The restriction of  $\rho$ to $\Z/p^n$ is also the identity since everything can be computed
via the Dirac measures there.  Thus the diagram
\[
\xymatrix{
M(\Z/p^{n+1})  \ar[r]^{\rho} \ar[d] &   M(\Z/p^{n+1}) \ar[d] \\
M(\Z/p^n)  \ar[r]^{\rho}   &   M(\Z/p^n)
}
\]
commutes and has identity morphisms for horizontal arrows.  Taking limits we see that  
\[
\rho \colon M(G, S) \to M(G, S)
\]
must be  the identity.
\end{proof}

\subsection{A Special Case  of Katz's Theorem}
In this
section we will verify Theorem \ref{katzthm} in the case $G = \G_m$.
To help the flow of the proof we check the result  for the Dirac measures.
Let $ G = \G_m$ and $t$ the coordinate given by $u-1$.  The power series associated to the 
Dirac measure $ \psi_l$  in this case is 
$(1 + t)^l$.

Suppose $l$ 
is prime to $p$, then $\psi_l$ is clearly supported on the units.
We will write $\zeta$ for a primitive  $p$ root of unity over $\Z_p$ and $\zeta_j = \zeta^j - 1$.
We compute:
\begin{eqnarray*}
\sum\limits_{j=0}^{p-1} (1 + (t +_G \zeta_j))^l = 
\sum\limits_{j=0}^{p-1} (1 + t + \zeta^j -1 + t\zeta^j - t )^j \\
= \sum\limits_{j=0}^{p-1} (\zeta^j (t + 1))^l \\
= (t + 1)^l\sum\limits_{j=0}^{p-1} \zeta^{jl} = 0.
\end{eqnarray*}
The last sum vanishes because  $p$ is prime to $l$ and we are summing
over the $p$-th roots of unity.

\begin{lemma}[N. Katz] \label{easykatz} 
Suppose $t$ is the coordinate on $\G_m$ given by $u -1 $ with associated formal group law $F$. 
A measure $\mu$ is supported on the units  if and only if 
the corresponding power series $f_{\mu} (t) \in S[[t]]$ 
satisfies 
\[
\sum\limits_{j = 0}^{p-1}  f_{\mu}( x +_F \zeta_j) = 0 
\]
\end{lemma}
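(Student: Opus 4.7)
The plan is to reduce the general statement to the Dirac case, which is essentially already handled in the excerpt, by approximating an arbitrary measure $\mu$ by its Riemann sums as in Proposition~\ref{intform} and showing that the trace operator $T_G$ commutes with the limit.

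First I would complete the Dirac computation. The excerpt records that $T_G(1+t)^l = (1+t)^l \sum_{j} \zeta^{jl} = 0$ when $\gcd(l,p)=1$; when $p\mid l$ the same computation gives $T_G(1+t)^l = p(1+t)^l$, since $\zeta^{jl}=1$ for every $j$. Thus for a Dirac measure $T_G\psi_l$ vanishes precisely when $\psi_l$ is supported on the units $\Z_p^\times$.

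Next, using the isomorphism $M(\G_m, S) \cong \lim_n M(\Gamma_n, S)$, I write $\mu$ as the compatible sequence $s_n = \sum_{i=0}^{p^n-1} a_i(n) \psi_i$, so that $f_\mu$ is the $(p,t)$-adic limit $\lim_n \sum_i a_i(n)(1+t)^i$ in $S[[t]]$. Each substitution $t\mapsto t+_F\zeta_j$ is continuous in this topology, and the full sum over $j$ lands back in $S[[t]]$ by Galois invariance, so I can apply $T_G$ term by term and invoke the Dirac calculation to obtain
\[
T_G f_\mu \;=\; \lim_{n} \; p \sum_{\substack{0\le i<p^n\\ p\mid i}} a_i(n)(1+t)^i \;=\; p\cdot f_{\mu|_{p\Z_p}},
\]
where $\mu|_{p\Z_p}$ denotes the restriction of $\mu$ to the closed subgroup $p\Z_p\subset T_pG^\vee$. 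Since $\mu$ is supported on the units if and only if $\mu|_{p\Z_p}=0$, and multiplication by $p$ is injective on $S[[t]]$ because $S$ is $p$-torsion-free, the vanishing of $T_G f_\mu$ is equivalent to $\mu$ being supported on the units.

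The main obstacle I expect is making the interchange of limits rigorous: I need to verify that the Riemann sums converge to $f_\mu$ in a topology preserved by the substitution operators $t\mapsto t+_F\zeta_j$ that define $T_G$, and to identify $\lim_n \sum_{p\mid i} a_i(n)(1+t)^i$ with the power series attached to the restricted measure $\mu|_{p\Z_p}$. The latter identification relies on the compatibility formula (\ref{aik-formula}), which says that the Riemann coefficients at level $n$ concentrated on $p\Gamma_n \cong \Gamma_{n-1}$ are precisely those that descend to the Riemann coefficients of the restricted measure at the lower level.
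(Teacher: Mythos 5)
Your proposal is correct, and it takes a genuinely different route from the paper. The paper proves the two directions of the equivalence asymmetrically: it expands $f_\mu$ via Mahler's theorem in generalized binomial coefficients, identifies the coefficient of $t^l$ in $T_G f_\mu$ as $\int_G \bigl(\sum_j \zeta^{jx}\bigr)\binom{x}{l} d\mu$, and observes that the integrand vanishes on units (giving the forward direction immediately); the converse is then extracted from the resulting identity $\lim_n \sum_{p\mid i} a_i(n)\binom{i}{l} = 0$ by plugging in the Mahler expansion of an arbitrary function vanishing on $(T_p G^\vee)^\times$. You instead compute $T_G f_\mu$ head-on using the Riemann-sum description of Proposition~\ref{intform} together with the Dirac computation $T_G(1+t)^i = p\,(1+t)^i$ for $p\mid i$ and $=0$ otherwise, obtaining the clean formula $T_G f_\mu = p\cdot f_{\mu|_{p\Z_p}}$; since $\Z_p^\times$ is open and closed in $\Z_p$, being in the image of $\mathrm{res}$ is equivalent to $\mu|_{p\Z_p}=0$, so both directions of the equivalence drop out at once from $p$-torsion-freeness of $S$. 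Your version is more symmetric and exposes the conceptual content (the trace is, up to the factor $p$, exactly the part of $\mu$ living on $p\Z_p$), at the cost of having to justify that $T_G$ commutes with the $(p,t)$-adic limit of Riemann sums and that the subseries $\lim_n \sum_{p\mid i} a_i(n)(1+t)^i$ really is the power series of the restricted measure. Both of these are standard and follow from the continuity of substitution by $t+_F\zeta_j$ in $\Z_p[\zeta][[t]]$ and the compatibility relation (\ref{aik-formula}), but you are right to flag them as the places where the sketch must be tightened; the paper avoids the first by never applying $T_G$ to a Riemann sum directly, working coefficient-by-coefficient instead.
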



\begin{proof}

First, suppose we have a measure $\mu$ that is supported on the units  and the corresponding
power series
\[
f_\mu (t) = \sum\limits_i  \left( \int\limits_{G} { x \choose i } d \mu (x) \right) t^i
\]
given by Mahler's theorem.

The coefficient of $t^l$ in the trace of $f_\mu(t)$ is 
\[
\int\limits_{G} \sum\limits_{j = 0 }^{p-1} \zeta^{jx } { x \choose l} d \mu (x).
\]
However, the integrand  vanishes on the units and so by hypothesis
the integral must be zero.  


For the converse, suppose the trace of $f_\mu (t)$ is zero.
This time we know
\begin{eqnarray} \label{x_choose_vanish}
\int\limits_{G} \sum\limits_{j = 0 }^{p-1} \zeta^{jx } { x \choose l} d \mu (x) = 0
\end{eqnarray}
since it computes the coefficient of $t^l$ in the trace of $f_\mu (t)$.  
Using the Riemann sum representation (Proposition \ref{intform} ) we can compute (\ref{x_choose_vanish})
via
\[
\lim\limits_{n} \sum\limits_{i = 0}^{p^n - 1} a_i (n)
\sum\limits_{j = 0 }^{p-1} \zeta^{j i } { i \choose l} = 0.
\]
We can rewrite this expression as
\begin{eqnarray} \label{p_divides}
\lim\limits_{n} \sum\limits_{\substack {i = 0 \\  (i, p) \neq 1} }    ^{p^n - 1} a_i (n) { i \choose l} = 0.
\end{eqnarray}
We have dropped the factor of $p$ since $R$ is torsion free.

Now suppose $f \colon T_p G^\vee \to \Z_p$ is a continuous that vanishes on $T_p G^\vee$.
By Mahler's theorem we can represent $f$ as
\[
f(x) = \sum\limits_{l} b_l { x \choose l }.
\]
To compute its integral we need to calculate

\begin{eqnarray*} \label{vanish}
\lim\limits_{n} \sum\limits_{i = 0}^{p^n -1 } a_i (n) f (i) =  \\
\lim\limits_{n} \sum\limits_{i = 0}^{p^n -1 } a_i (n)  \sum\limits_{l = 0} b_l { i \choose l} = \\ 
\sum\limits_{l = 0}   b_l \lim\limits_{n} \sum\limits_{i = 0}^{p^n -1 } a_i (n)   { i \choose l} =  \\
\sum\limits_{l = 0}   b_l \lim\limits_{n}
\sum\limits_{   \substack{  i = 0 \\ (i, p ) \neq 1} }^{p^n -1 } a_i (n)   { i \choose l} =  0.
\end{eqnarray*}
The second equality follows since $T_p G^\vee $ is compact; by uniform continuity
\[
\int\limits_{G} \sum\limits_{n} b_n { x \choose n} d \mu =
\sum\limits_{n} \int\limits_{G} b_n { x \choose n} d \mu.
\]
The third equality follows from the hypothesis $f ( \left( T_pG^\vee \right)^\times) = 0$.
The last  equality follows from the calculation in  (\ref{p_divides}).
\end{proof}

\subsection{Proof of the General Case}
Let $\phi$ be a lift of the Frobenius over $\F_p$ to $\Z_p$.  The power series $\phi$ has coefficients in $\Z_p$
and satisfies 
\begin{eqnarray*}
\phi = p t + O(t)  \\
\phi \equiv t^p \mod p.
\end{eqnarray*}
Let $G = G_\phi$ be the associated Lubin-Tate
formal group law over $\Z_p$ with special endomorphism $\phi$.
Since $\G_m$ is the universal formal group law of height one 
there exists a strict isomorphism  
\[
\Theta \colon \G_m \to G.
\]

\begin{theorem} \label{generalkatz}
Suppose $t$ is a coordinate on $G$.
Under the identification
\[
\mathcal{D}_G \colon M(G, S) \to S[[t]]
\]
a measure $\mu$ is supported on the units if and only if 
\[
T_G f_\mu (t) = 0.
\]
\end{theorem}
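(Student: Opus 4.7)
The plan is to bootstrap the general statement from the multiplicative case (Lemma \ref{easykatz}) via the strict isomorphism $\Theta\colon \G_m \to G$, whose existence is guaranteed because $\G_m$ is the universal height-one formal group law over $\Z_p$. The reason to believe this works is that all of the data in play — measures, moments, the trace $T_G$, and the notion of being supported on the units — should behave functorially with respect to isomorphisms of formal groups.

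First I would examine how $\Theta$ acts on the Cartier-dual side. An isomorphism of formal groups induces an isomorphism on $p$-power torsion $\Gamma_n \to \Gamma_n'$ and thus on Tate modules $T_p G \cong T_p \G_m$; dualizing yields a compatible isomorphism $T_p G^\vee \cong T_p \G_m^\vee$. Because this isomorphism is $\Z_p$-linear, it sends units to units, and so it identifies $(T_p G^\vee)^\times$ with $(T_p \G_m^\vee)^\times$. In particular, under the induced pullback $\Theta^*\colon M(G,S) \to M(\G_m,S)$ a measure $\mu$ on $G$ is supported on the units if and only if $\Theta^*\mu$ is supported on the units of $\G_m$.

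Next I would translate the trace condition. Let $s$ be the standard coordinate $u-1$ on $\G_m$ and $t$ a coordinate on $G$; then $t = \Theta(s)$ identifies the two coordinate rings, and Lemma \ref{d_compatible} says the diagram
\[
\xymatrix{
M(G,S) \ar[r]^-{\mathcal{D}_G} \ar[d]_{\Theta^*} & S[[t]] \ar[d]^{\Theta} \\
M(\G_m, S) \ar[r]_-{\mathcal{D}_{\G_m}} & S[[s]]
}
\]
commutes, i.e.\ $f_{\Theta^*\mu}(s) = f_\mu(\Theta(s))$. Moreover, since $\Theta$ is a homomorphism of formal groups, $\Theta$ restricts to a bijection $\G_m[p] \xrightarrow{\sim} G[p]$, and $\Theta(s +_{F_m} \zeta_j) = \Theta(s) +_F \Theta(\zeta_j)$. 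Running $c = \Theta(\zeta_j)$ through $G[p]$ as $j$ varies,
\[
T_{\G_m} f_\mu(\Theta(s)) \;=\; \sum_{j=0}^{p-1} f_\mu\bigl(\Theta(s) +_F \Theta(\zeta_j)\bigr) \;=\; \bigl(T_G f_\mu\bigr)(\Theta(s)).
\]
Since $\Theta$ is a power-series isomorphism (invertible on $S[[s]]$), $T_G f_\mu = 0$ if and only if $T_{\G_m}(f_\mu\circ\Theta) = 0$.

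Combining these two reductions, $\mu$ is supported on the units of $G$ iff $\Theta^*\mu$ is supported on the units of $\G_m$, iff (by Lemma \ref{easykatz}) $T_{\G_m} f_{\Theta^*\mu}(s) = 0$, iff $T_G f_\mu(t) = 0$. The main obstacle I anticipate is being careful about the compatibility of $\Theta$ with the Cartier duality used to define measures — once one checks that the natural isomorphism $T_pG^\vee \cong T_p\G_m^\vee$ induced by $\Theta$ matches the coordinate change used in Lemma \ref{d_compatible}, the rest is formal.
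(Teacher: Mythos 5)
Your proposal is correct and follows essentially the same route as the paper: both proofs transport the problem along the strict isomorphism $\Theta\colon\G_m\to G$, invoke Lemma \ref{d_compatible} to compare the $\mathcal{D}$ identifications, observe that $\Theta$ carries $\G_m[p]$ bijectively onto $G[p]$ so the two trace conditions correspond, and then conclude from the warm-up Lemma \ref{easykatz}. You are somewhat more explicit than the paper about why ``supported on the units'' transfers (via the induced $\Z_p$-linear isomorphism of $T_p G^\vee$ with $T_p\G_m^\vee$), but the underlying strategy is identical.
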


\begin{proof}

The diagram
\[
\xymatrix{
M(\G_m ^\times  ,  S) \ar[r]   \ar[d]^\Theta         &         M( \G_m, S ) \ar[d]^\Theta      \\
M(G^\times, S) \ar[r]  & M(G, S)
} 
\]
commutes and the vertical arrows are isomorphism.

Suppose the trace of  $f(t) \in S[[t]]$ with respect to $G$ vanishes.
Under the change of variables $t \mapsto \Theta (s)$, $T_G f(t) = 0 $ 
maps to $T_{\G_m} f (\Theta(s)) = 0$.  Observe that
the set of $\Theta(v)$ for $v \in G[p]$ is the same as the set of $u \in  \G_m [p] $ since $\Theta$ is 
an isomorphism.

Pulling back along $\Theta$ produces the change of variables formula
\[
D_F ^k f(t) |_{t=0} = \Theta^* (D_F )^k f( \Theta (s)) |_{s = 0} = D^k f( \Theta(s) |_{s = 0}.
\]
It follows from Lemma \ref{d_compatible}
that the measure associated to $f(t)$ via the isomorphism 
\[
\mathcal{D}_G \colon M(G, S) \to S[[t]]
\]
coincides with the measure determined by $f ( \Theta (s))$ under
\[
\mathcal{D}_{\G_m} \colon M(G, S) \to S[[s]]
\]
as they produce the same moments.
The claim now follows from Lemma \ref{easykatz}.
One can use a similar argument to prove the ``only if'' part of the theorem.  
\end{proof}

\section{Measures and  $\CP^\infty$}

A measure on $\G_m^\times$ determines a cohomology operation on $K_p$, recall Theorem \ref{kpmeasure}
In this section we describe
the effect of these operations on $\CP^\infty$.  
\begin{theorem} \label{effectcp}
The effect of an operation $\mu$ on the the tautological line bundle over 
$\CP^\infty$ is the series $f_\mu$.
\end{theorem}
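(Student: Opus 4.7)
The plan is a two-step argument: verify $T_\mu(\tline) = f_\mu(t)$ directly on Dirac measures, and then extend to arbitrary $\mu$ by density via the Riemann sum representation of Proposition \ref{intform}.

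First I would fix the identifications. The standard complex orientation of $K_p$ gives $K_p^0 \CP^\infty \cong \Z_p[[t]]$ with $t = \tline - 1$, the same multiplicative coordinate on $\G_m$ used in Lemma \ref{easykatz} and Proposition \ref{diffop}. Write $T_\mu : K_p \to K_p$ for the operation corresponding to $\mu$ under Theorem \ref{kpmeasure}. For the Dirac measure $\psi_a$ at $a \in \Z_p$, the corresponding operation is the Adams operation $\psi^a$, whose effect on the tautological line bundle is $\psi^a(\tline) = \tline^a = (1+t)^a$. On the measure side, the proof of Lemma \ref{easykatz} records $f_{\psi_a}(t) = (1+t)^a$, so the identity holds on Dirac measures.

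To extend to an arbitrary measure $\mu$, I would invoke Proposition \ref{intform} to realize $\mu$ as a compatible limit of the finite Dirac sums $\mu_n = \sum_{i=0}^{p^n - 1} a_i(n) \psi_i$. Both sides of the desired identity are $\Z_p$-linear in $\mu$, so at level $n$ one has
\[
\sum_{i=0}^{p^n - 1} a_i(n)(1+t)^i \;=\; \sum_k \left( \sum_{i=0}^{p^n - 1} a_i(n) \binom{i}{k} \right) t^k.
\]
For each fixed $k$ the inner coefficient converges $p$-adically to $\int \binom{x}{k}\, d\mu$, which by Mahler's theorem (Theorem \ref{mahlerthm}) is the $k$-th coefficient of $f_\mu(t)$. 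Hence the right-hand side converges in the $(p,t)$-adic topology on $\Z_p[[t]]$ to $f_\mu(t)$. Matching the same limit on the operator side yields $T_\mu(\tline) = f_\mu(t)$.

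The main obstacle is justifying that $\mu \mapsto T_\mu(\tline)$ is continuous in the $(p,t)$-adic topology on $K_p^0 \CP^\infty$, so that the limit representation of $\mu$ as Dirac sums assembles into $T_\mu$ applied to $\tline$. This reduces to the observation that $K_p^0 \CP^\infty = \lim\limits_{\leftarrow} K_p^0 \CP^N$, each $K_p^0 \CP^N$ is a finitely generated $\Z_p$-module, and the profinite module $K_p^0 K_p$ acts continuously on the tower; once this naturality is in hand, the Mahler-basis computation above finishes the argument.
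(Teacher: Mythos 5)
Your argument follows the same strategy as the paper's proof of Theorem \ref{MST+KATZ}: verify the identity on Dirac measures via $\psi^a(\tline) = (1+t)^a$ and $f_{\psi_a}(t) = (1+t)^a$, then pass to an arbitrary $\mu$ using the Riemann-sum representation of Proposition \ref{intform} and the completeness of $\Z_p[[t]]$ in the $(p,t)$-adic topology. The only cosmetic difference is that you justify continuity through the skeletal tower $\varprojlim K_p^0\CP^N$, whereas the paper routes through $\varprojlim K_p^0 B\Z/p^n$ and the representation rings $R(\Z/p^n)\otimes\Z_p$ so as to mirror the decomposition $M(G,S)\cong\varprojlim M(\Gamma_n,S)$ exactly; both towers induce the same $(p,t)$-adic topology on $\Z_p[[t]]$, so the argument is sound.
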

Once we understand the proof of this fact, the equivalence of Theorem \ref{katzthm} and the result of Madsen, Snaith
and Tornehave will be an easy consequence.
The argument is based on  section 2.3 of \cite{MR0494076}

\subsection{Collection of Statements}

Recall that for $p$-adic K-Theory, the cohomology of $\CP^\infty$
can be computed via the cohomology of the cyclic groups of $p$-powers. 
Explicitly we have
\begin{lemma}
There is an isomorphism 
\begin{eqnarray} \label{bzp}
K_p^0 \CP^\infty \cong \lim\limits_{\leftarrow} K_p^0  \left( B \Z /p^i \right) .
\end{eqnarray}
\end{lemma}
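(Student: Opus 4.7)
The plan is to reduce \ref{bzp} to an explicit algebraic identity about $\Z_p[[t]]$. First, identify each side explicitly. The class $t := [\tline] - 1$ generates $K_p^0 \CP^\infty$ as a complete power series algebra, so $K_p^0 \CP^\infty \cong \Z_p[[t]]$. On the other side, applying Atiyah's completion theorem to the finite $p$-group $\Z/p^i$ and then $p$-completing yields
\[
K_p^0 B\Z/p^i \cong \Z_p[[t]]/\bigl((1+t)^{p^i}-1\bigr),
\]
where $(1+t)^{p^i}-1$ is the $p^i$-series of the multiplicative formal group law.

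Next, identify the transition maps. The map $B\Z/p^i \to \CP^\infty$ induced by the inclusion of $p^i$-th roots of unity into $U(1)$ pulls the tautological line bundle back to the tautological character of $\Z/p^i$, so on $K$-theory it realizes the natural quotient $\Z_p[[t]] \twoheadrightarrow \Z_p[[t]]/((1+t)^{p^i}-1)$. Assembling these for varying $i$ produces the comparison map
\[
\rho \colon \Z_p[[t]] \longrightarrow \lim_{\leftarrow i} \Z_p[[t]]/\bigl((1+t)^{p^i}-1\bigr),
\]
and the lemma amounts to showing that $\rho$ is an isomorphism.

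Set $\phi_i := (1+t)^{p^i}-1$, which becomes a distinguished polynomial of degree $p^i$ after Weierstrass preparation and satisfies $\phi_i \equiv t^{p^i} \pmod{p}$. For injectivity, any $f \in \bigcap_i (\phi_i)$ is divisible in the UFD $\Z_p[[t]]$ by every $\phi_i$; reducing mod $p$ forces the $t$-adic order of the image of $f$ to exceed each $p^i$, hence $f \in p\Z_p[[t]]$. Since $\phi_i$ is coprime to $p$, this step can be iterated to give $f \in \bigcap_n p^n \Z_p[[t]] = 0$. For surjectivity, the transition maps in the inverse system are surjective (the ideals nest as $(\phi_i) \supseteq (\phi_{i+1})$), so $\lim^1$ vanishes; a compatible sequence of representatives converges in $\Z_p[[t]]$ because the coefficient of $t^k$ in $\phi_i$ equals $\binom{p^i}{k}$, whose $p$-adic valuation tends to $\infty$ in $i$ for each fixed $k$, so $(\phi_i)$ is cofinal with a fundamental system of neighborhoods of $0$ in the $(p,t)$-adic topology.

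The main obstacle is the convergence argument in the surjectivity step: one must pass from a formal compatible system of cosets to a genuine element of $\Z_p[[t]]$, which requires a careful check that the filtration by the ideals $(\phi_i)$ is comparable with the standard $(p,t)$-adic filtration.
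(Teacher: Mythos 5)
The paper states this lemma as a ``recall'' with no proof of its own (the relevant background is in Madsen--Snaith--Tornehave and in the Atiyah--Segal completion theorem), so there is no in-paper argument to compare against; your algebraic proof is the standard one and is essentially correct. Two small things are worth tightening up. First, your appeal to ``$\lim^1$ vanishes because the transition maps are surjective'' is not quite the step you need: what you are invoking is $\lim^1$ of the \emph{quotients} $\Z_p[[t]]/(\phi_i)$, whereas surjectivity of $\rho$ would follow from $\lim^1$ of the \emph{ideals} $(\phi_i)$, and that system (with inclusion transition maps) is not Mittag--Leffler, so its $\lim^1$ is not obviously zero. Fortunately you don't need it: the Cauchy-sequence argument you give afterward is the correct mechanism, and the surjectivity of each $\Z_p[[t]] \to \Z_p[[t]]/(\phi_i)$ is what lets you choose compatible representatives $\tilde f_i$ with $\tilde f_{i+1} - \tilde f_i \in (\phi_i)$. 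Second, the ``careful check'' you flag at the end does go through and should be recorded: you verified that for each $N$ there is an $i(N)$ with $(\phi_i) \subseteq (p,t)^N$ (using $v_p\binom{p^i}{k} = i - v_p(k)$), which makes $(\tilde f_i)$ Cauchy and hence convergent to some $f$ in the complete ring $\Z_p[[t]]$; to conclude $f \equiv f_i \pmod{\phi_i}$ you then need each ideal $(\phi_i)$ to be \emph{closed} in the $(p,t)$-adic topology, which holds because $\Z_p[[t]]$ is a complete Noetherian local ring (Krull intersection / Artin--Rees: every ideal, indeed every submodule of a finitely generated module, is closed). Note the comparability of filtrations goes only one way: one never has $(p,t)^N \subseteq (\phi_i)$, since $\Z_p[[t]]/(\phi_i)$ is infinite while $\Z_p[[t]]/(p,t)^N$ is finite; you only use the direction you proved, so no harm is done, but the phrase ``cofinal with a fundamental system'' should be read in that one-sided sense. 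Your injectivity argument via unique factorization in $\Z_p[[t]]$ is clean and needs no change.
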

We produced a  similar statement for $S$-valued measures on $G$
\begin{eqnarray} \label{mzp}
M(G, S)  \cong \lim\limits_{\leftarrow} M(\Gamma_n  , S ).
\end{eqnarray}

\begin{definition} \label{ahat}
Let $\hat {A}_p $ be the set of H-maps  from $\Omega^\infty K_p$ to itself.
\end{definition}
We quote a lemma of Adams describing $\hat{A}_p$.  It is Lemma $6$ in \cite{MR0251716}
\begin{lemma} [Adams] \label{adamslemma}  There is an isomorphism
\[
\epsilon \colon \hat{A} _p \cong K_p \CP^\infty
\]
which sends an operation $\mu$  to its effect on the tautological line bundle $\tline$.
\end{lemma}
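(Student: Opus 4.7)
The plan is to establish injectivity and surjectivity of $\epsilon$ separately, exploiting that after $p$-completion the identity component of $\Omega^\infty K_p$ is equivalent to $BU^\wedge_p$ with its Whitney-sum $H$-space structure, and that the composite $\CP^\infty = BU(1) \hookrightarrow BU \to \Omega^\infty K_p$ classifies the tautological line bundle $\tline$. The map $\epsilon$ is then restriction of an $H$-map along this inclusion.

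For injectivity I would invoke the splitting principle: any class in $K_p^0 BU(n)$ pulls back injectively to $K_p^0 (\CP^\infty)^n$ along $BT^n \to BU(n)$. If $\mu$ is an $H$-map then additivity with respect to Whitney sum forces
\[
\mu(L_1 \oplus \cdots \oplus L_n) = \mu(L_1) + \cdots + \mu(L_n),
\]
so $\mu$ evaluated on any $n$-fold direct sum of line bundles is completely determined by $\mu(\tline)$. Since the collection of maps $BT^n \to BU(n) \to BU$ detects all classes in $K_p^0 BU$, the element $\epsilon(\mu) = \mu(\tline) \in K_p^0 \CP^\infty$ determines $\mu$.

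For surjectivity, given $f \in K_p^0 \CP^\infty$, I would build an $H$-map $\hat f$ by defining, on each $BU(n)$, the symmetric expression $\hat f(L_1 \oplus \cdots \oplus L_n) = f(L_1) + \cdots + f(L_n)$. Symmetry in the $L_i$ means this descends from $BT^n$ to $BU(n)$, and the compatible system along the filtration $BU(1) \subset BU(2) \subset \cdots$ assembles in the $p$-adic limit to a class in $K_p^0 BU$. Representability then converts this into an actual map $\Omega^\infty K_p \to \Omega^\infty K_p$, and the additivity property is checked by comparing $\hat f$ on $BU \times BU$ with the sum of two copies of $\hat f$, both agreeing after pulling back along $BT^m \times BT^n$ by construction. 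One finally accounts for the $\Z$ component of $\Omega^\infty K_p$ (virtual dimension) by the usual bookkeeping.

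The main obstacle is the surjectivity step, specifically controlling $p$-adic convergence as $n \to \infty$ and verifying that $\hat f$ really is additive on the whole of $\Omega^\infty K_p$ rather than merely on sums of line bundles. In practice this is handled by matching the inverse system $BU(1) \subset BU(2) \subset \cdots$ against a completed description of $K_p^0 BU$ as a ring of symmetric functions, and checking that the primitives under the Hopf algebra coproduct coincide with $K_p^0 \CP^\infty$; this is the same topological underpinning that yields the isomorphism $K_p^0 \CP^\infty \cong \lim K_p^0 B\Z/p^n$ already used in the paper.
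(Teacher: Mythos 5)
The paper does not prove this lemma: it explicitly says ``We quote a lemma of Adams'' and cites it as Lemma~6 of \cite{MR0251716}, so there is no in-text argument to compare yours against. Your sketch does follow the outline of Adams' own proof, and the part you treat more confidently --- injectivity via the splitting principle plus additivity of an $H$-map over Whitney sums --- is essentially right, modulo the small point that one must verify $K_p^0 BU \to K_p^0 BT^\infty$ remains injective after $p$-completion (this follows from exactness of completion on finitely generated modules together with the integral splitting principle, plugged into a Milnor sequence). For surjectivity, you have correctly located the right formulation: an $H$-map corresponds to a primitive element of the Hopf algebra $K_p^0 BU$ under the Whitney-sum coproduct, and one shows the module of primitives is $K_p^0 \CP^\infty$. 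You also acknowledge the genuine difficulties, namely convergence of the symmetric expressions in the completed topology and extending additivity from sums of line bundles to all of $\Omega^\infty K_p$; these are real, and closing them takes essentially the content of Adams' original argument and of the Madsen--Snaith--Tornehave analysis \cite{MR0494076}. One caution: the final sentence of your sketch blurs two distinct mechanisms. The description of $K_p^0 BU$ as a (completed) ring of symmetric functions is one thing; the isomorphism $K_p^0 \CP^\infty \cong \lim K_p^0 B\Z/p^n$, which comes from exhibiting $\CP^\infty$ as a $p$-completed colimit of classifying spaces of cyclic $p$-groups, is an independent input, not a consequence of the symmetric-function picture. The paper uses the latter isomorphism to match measures with unstable operations, but it is not what makes the Adams lemma go through.
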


The above lemma together with line (\ref{bzp}) allows us to write any element of $\hat {A}_p$ uniquely
as a sequence 
\[
\left\{ s_n = \sum\limits_{i = 0 }^{ p^n -1} a_i (n) \psi^i \right\}
\]
where $\psi^i$ is the Adams operation at $i$ and $a_i(n) \in \Z_p$.


Our goal is the following
\begin{theorem} \label{MST+KATZ}
Suppose we have chosen a coordinate for $\G_m$ with 
induced formal group law $F$.
Under these conditions, the diagram
\begin{eqnarray}
\xymatrix{
M(\G_m ^\times , \Z_p ) \ar[r]^{\mathrm{res}} \ar[d] &     M(\G_m , \Z_p) \ar[r]^-{\mathcal{D}_{\G_m}}  &  \Z_p [[t]] \\
K^0_p K_p \ar[r]^{\Omega^\infty} &  \hat{A}_p   \ar[r]^\epsilon &   K_p^0 \CP^\infty \ar[u] 
}
\end{eqnarray}
commutes.
\end{theorem}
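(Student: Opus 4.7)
The plan is to verify the diagram commutes by reducing to Dirac measures and then using density. Both routes are continuous $\Z_p$-linear maps, and, as indicated in the excerpt after Theorem \ref{kpmeasure}, the Adams--Harris--Switzer isomorphism carries a Dirac measure $\psi_a$ (for $a \in \Z_p^\times$) to the Adams operation $\psi^a \in K^0_p K_p$. So the strategy is: check the equality on Dirac measures and then extend by continuity using the inverse-limit description of both sides.

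First I would set up the reduction. The right vertical map is the inverse limit (\ref{bzp}), and the composite $\mathcal{D}_{\G_m} \circ \mathrm{res}$ factors through the corresponding inverse limit (\ref{mzp}), since measures on $\G_m$ are $\varprojlim M(\Gamma_n, \Z_p)$. Thus it suffices to prove a level-wise commutative square for each $n$, relating $M(\Gamma_n, \Z_p)$ to $K_p^0(B\Z/p^n)$. On $M(\Gamma_n, \Z_p)$ the Dirac measures $\psi_a$ for $a \in \Z/p^n$ form a $\Z_p$-basis, so it suffices to verify the square on each such $\psi_a$.

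Next I would do the Dirac computation. Fix the coordinate $t = u - 1$ on $\G_m$, so that the tautological line bundle is $\tline = 1+t \in K_p^0 \CP^\infty$ and the invariant derivation is $D_F = (1+t)\frac{d}{dt}$. The top route sends $\psi_a$ to the power series $f_{\psi_a}(t)$ characterized (via Proposition \ref{diffop} and Corollary \ref{mahlerderv}) by $D_F^k f_{\psi_a}(t)\big|_{t=0} = \int x^k\, d\psi_a = a^k$ for every $k \geq 0$. Since $D_F^k\bigl((1+t)^a\bigr) = a^k (1+t)^a$, Mahler's theorem forces $f_{\psi_a}(t) = (1+t)^a$. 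The bottom route sends $\psi_a$ to $\psi^a$, applies $\Omega^\infty$, and evaluates on $\tline$ by Adams's Lemma \ref{adamslemma}; since Adams operations act on line bundles by $\psi^a(\tline) = \tline^a = (1+t)^a$, the two outputs agree.

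Finally I would extend from Diracs to all measures. Using the description of a general $\mu$ as a compatible sequence of Riemann sums $\sum a_i(n)\psi_i$ coming from Proposition \ref{intform}, both compositions in the square are obtained by taking $\Z_p$-linear combinations level by level and then passing to the inverse limit; their agreement on Diracs then implies the full commutativity. The main obstacle will be keeping the bookkeeping honest: one must check that both $\mathcal{D}_{\G_m}$ and $\epsilon \circ \Omega^\infty$ really are continuous for the inverse-limit topologies (so that the passage from Diracs to general measures is legitimate) and that the Adams--Harris--Switzer identification is compatible level by level with the isomorphism $K_p^0(B\Z/p^n) \cong M(\Gamma_n, \Z_p)$ induced by the coordinate $t = u-1$. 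Once those compatibilities are in place, the computation $f_{\psi_a} = (1+t)^a = \psi^a(\tline)$ closes the argument.
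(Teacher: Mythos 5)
Your core argument---reduce to Dirac measures via the inverse-limit presentations of both sides, compute that $\psi_a$ maps to $(1+t)^a$ along each route, then extend by $\Z_p$-linearity and continuity---is essentially the same as what the paper does. The paper packages the reduction as a Riemann-sum calculation (Proposition \ref{intform}) and proves the level-wise compatibility as Proposition \ref{measure+ktheoy}, but the underlying content is identical to your Dirac-plus-density strategy, and your computation $D_F^k(1+t)^a|_{t=0}=a^k$ forcing $f_{\psi_a}=(1+t)^a$ closes the loop correctly for that coordinate.

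The gap is that the theorem, as stated, is for an \emph{arbitrary} chosen coordinate on $\G_m$, while your proof fixes $t = u-1$ at the outset and never returns to the general case. For a general coordinate $t$ with induced law $F$, the tautological bundle is no longer $1+t$; it is $1+\Theta^{-1}(t)$ for the strict isomorphism $\Theta$ carrying $F$ to $x+y+xy$, and the derivation $D_F$ differs from $(1+t)\frac{d}{dt}$. The paper handles this in a separate ``Changing Coordinates'' step (culminating in Proposition \ref{measure_effect}), using Lemma \ref{d_compatible} that $\mathcal{D}_{G}$ is natural under strict isomorphism and the classical commutativity of the triangle identifying $[K_p,K_p]$ with $\Z_p[[t]]$ or $\Z_p[[s]]$ depending on coordinate. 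You would need to add this change-of-variables argument---show that pulling back $D=(1+s)\frac{d}{ds}$ along $\Theta$ gives $D_F$, that this is compatible with the identification of measures with power series on each side, and then deduce $\int_G(1+\Theta^{-1}(t))^x\,d\mu(x)=f_\mu(t)$---to close the general statement.
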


Observe that an unstable operation  $\phi \in \widehat{A}_p$ is in the image 
of $\Omega^\infty$  if and only if the the corresponding measure is in the image
of the restriction map.




The proof of Theorem \ref{MST+KATZ} will take  a few steps.  

\subsection{The K-Theory of $\CP^\infty$}


We exploit the similarity of Line (\ref{bzp}) and Line (\ref{mzp}) to calculate the effect
of a measure on the cohomology of $\CP^\infty$.

The direct system
\[
.... \Z / p^{n - 1} \xrightarrow{ \cdot p } \Z / p^n \to ...
\]
leads to the inverse system
\[
....K  B ( \Z / p^n )  \to   K  B ( \Z / p^{n - 1}  )\to ...
\]
It is easier to describe the maps in the system if we use Atiyah's calculation to replace these gadgets with
the representation rings 
\[
... R( \Z / p^n ) \otimes \Z_p \to R( \Z / p^{n - 1} ) \otimes \Z_p  \to ...
\]
These are easily computed as 
\[
... \Z_p[ s] / (s^{p^n} - 1)  \to \Z_p[s] / (s^{p^{n-1} } - 1) \to ...
\]
Where $s$ is the one dimensional irreducible representation
\[
\Z/p^n \to GL_1 ( \C) \cong \C^\times
\]
that sends the generator $\gamma$ to the appropriate primitive $p$-th root of unity.  These one dimensional 
representations are compatible under the structure  maps. Thus 
we have no notation to describe the source of the representation.

We can now describe the maps involved.  It is very similar to what we saw in the measure theory situation involving
$\Gamma_n$.
The map induced on the representation rings by multiplication by $p$ sends the generator $s$ to $s^p$.
Thus we have for a general element
\[
\sum\limits_{i = 0}^{p^n - 1} a_i (n)  s^i \to \sum\limits_{i = 0}^{p^n - 1} a_i (n) s^{ip}.
\]
Collecting terms once more, we can rewrite the image as
\[
\sum\limits_{j = 0 }^{p^{n-1} - 1}    \sum\limits_{l = 0}^{n-1} a_{j + l p^{n -1}} s^{jp}.
\]

We can connect this to unstable operations via an
isomorphism
\[
\hat {A_p} \cong \lim\limits_n K_p^0 B ( \Z /p^n ) \cong \lim\limits_{k} R(B \Z/p^n ) \otimes \Z_p 
\]
such that any operation corresponds to a sequence
\[
\left\{ s_n = \sum\limits_{i = 0}^{p^n - 1} a_i (n) s ^i \right\}
\]
where $\psi^i$ matches with  $s^i$
inside 
\[
K_p^0 B( \Z/ p^n ) \cong \Z_p[[s]] / ( s^{p^n} - 1 ).
\]
This is the approach of Madsen, Snaith and Tornehave.

\begin{lemma}
The diagram
\[
\xymatrix{
M(\Z/p^n ) \ar[r]  \ar[d]    &             M(\Z/p^{k - 1} ) \ar[d] \\
\Z_p [t] /  (s^{p^k } -1)  \ar[r]    &      \Z_p [t] / (s^{p^{k - 1}} - 1)
}
\]
where the vertical maps send $d \psi_i$ to $s^i$ commutes.
\end{lemma}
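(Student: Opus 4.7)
The plan is to check commutativity on the Dirac measures $d\psi_i$ for $i \in \Z/p^n$, since these span $M(\Z/p^n)$ and every arrow in the square is $S$-linear. This reduces the statement to a one-step diagram chase.

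Along the top-then-right route, the description of the measure-side inverse system preceding (\ref{aik-formula}) sends $d\psi_i$ to $d\psi_{\bar i}$ with $\bar i \equiv i \bmod p^{n-1}$, and the right vertical map then produces $s^{\bar i} \in \Z_p[s]/(s^{p^{n-1}}-1)$. Along the down-then-right route, the left vertical sends $d\psi_i$ to $s^i \in \Z_p[s]/(s^{p^n}-1)$, and one then needs to identify the bottom horizontal arrow. This is the structure map of the inverse system $K_p^0 B(\Z/p^n) \to K_p^0 B(\Z/p^{n-1})$ induced by the subgroup inclusion $\cdot p \colon \Z/p^{n-1} \hookrightarrow \Z/p^n$; under Atiyah's identification with completed representation rings this is just restriction of characters, and the one-line calculation $\zeta_{p^n}^{pb} = \zeta_{p^{n-1}}^b$ shows it carries the generator $s$ of $R(\Z/p^n)$ to the generator of $R(\Z/p^{n-1})$. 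Hence $s^i \mapsto s^i$, which equals $s^{\bar i}$ in $\Z_p[s]/(s^{p^{n-1}}-1)$, matching the other composite. The square thus commutes on every $d\psi_i$ and, by linearity, on all of $M(\Z/p^n)$.

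The one point that needs care---and essentially the only obstacle---is that the two horizontal arrows arise from different but Pontryagin dual morphisms: the measure-side arrow comes from the quotient $\Z/p^n \twoheadrightarrow \Z/p^{n-1}$ (the pushforward of Dirac measures along multiplication by $p$ on the $p^n$-torsion), while the representation-ring arrow comes from the inclusion $\cdot p \colon \Z/p^{n-1} \hookrightarrow \Z/p^n$ of cyclic subgroups of $S^1$. The vertical identification $d\psi_i \leftrightarrow s^i$ is exactly the Pontryagin-duality identification of $\Z/p^n$ with its dual, and this is what swaps these two arrows and forces the square to commute.
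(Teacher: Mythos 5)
Your proof is correct, and since the paper's own proof of this lemma is the single word ``Obvious,'' what you have supplied is exactly the diagram chase the paper leaves to the reader: reduce by $S$-linearity to the Dirac generators $d\psi_i$, compute both composites, and observe that the measure-side arrow ($d\psi_a \mapsto d\psi_{\bar a}$, coming from the quotient $\Z/p^n \twoheadrightarrow \Z/p^{n-1}$) and the representation-ring arrow (restriction along the subgroup inclusion $\cdot p \colon \Z/p^{n-1} \hookrightarrow \Z/p^n$) are intertwined by the vertical identification $d\psi_i \leftrightarrow s^i$, which is precisely the Pontryagin-duality identification you point out. Your careful computation of the bottom arrow is worth flagging separately, because it exposes a slip in the text immediately preceding this lemma: the paper there asserts that the map on representation rings ``sends the generator $s$ to $s^p$'' and writes the image of $\sum_i a_i(n)\,s^i$ as $\sum_i a_i(n)\,s^{ip}$. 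As you compute, restriction along $\cdot p$ carries the tautological character of $\Z/p^n$ to the tautological character of $\Z/p^{n-1}$ (since $\zeta_{p^n}^{pb} = \zeta_{p^{n-1}}^{b}$), so the correct rule is $s\mapsto s$, and the general element is sent to $\sum_j \bigl(\sum_{l=0}^{p-1} a_{j+lp^{n-1}}(n)\bigr)\,s^j$. The paper's ``$s\mapsto s^p$'' description appears to conflate the subgroup \emph{inclusion} $\Z/p^{n-1}\hookrightarrow\Z/p^n$ with the \emph{endomorphism} $\cdot p$ of $\Z/p^n$; with $s\mapsto s^p$ the two composites would in fact disagree (e.g.\ for $p=3$, $n=2$: $d\psi_5 \mapsto d\psi_2 \mapsto s^2$ along the top, but $s^5 \mapsto s^{15}=1$ along the bottom), so your version is the one for which the lemma is true.
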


\begin{proof}
Obvious.
\end{proof}


Indeed, the vertical arrows are isomorphisms and so the limits are isomorphic:
\[
\lim\limits_{k} M ( \Z / p^k , \Z_p) \cong \lim\limits_{k} \Z_p[s] /  (s^{p^k } - 1).
\]
We are now in the situation to compute over $\Z_p$. 
Suppose $\mu \in M(\Z_p^\times)$.   Let $\bar{\mu}$ be the corresponding element of 
$M(\Z /p^k )$.  It is represented
by a formal sum
\[
\sum\limits_{i = 0}^{p^n - 1} a_i (n) d \psi_i.
\]

Likewise, if we consider $\mu$ as an element of $\hat{A_p}$ and then consider its image inside 
$K_p ( B\Z/p^n)$ it is described
by 
\[
\mu (s) = \sum\limits_{i = 0 }^{p^n -1 } b_i (n) s^i.
\]
We have described the map between these two gadgets.  It sends $s^i$ to $d \psi_i$.  
It remains to check if
\[
a_i (n) = b_i (n)
\]
for each $i$ if we consider $\mu$ as a measure on $\Z_p^\times$ or as an operation.

It's clear that if $\mu$ is a sum of Adams operations, these two coefficients agree since they are both counting
the  number of copies of an Adams operation in the original operation.  This remains true when
passing to limits and the result of the discussion is
\begin{proposition} \label{measure+ktheoy}
The diagram 
\[
\xymatrix{
M(\Z_p^\times) \ar[d] \ar[r]       &       \lim\limits_{k} M(\Z/p^n, \Z_p)  \ar[d] \\
K_p^0 K_p \ar[r]   &      \lim\limits_{k} K^0_p (B \Z/ p^k)
}
\]
commutes.
\end{proposition}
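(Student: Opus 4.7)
The plan is to reduce commutativity to a check at each finite stage of the two inverse limits and then propagate from Dirac measures via continuity. The isomorphisms (\ref{bzp}) and (\ref{mzp}) display both right-hand entries as inverse limits in $n$, so it suffices to prove that for each $n \geq 1$ the finite-level square
\[
\xymatrix{
M(\Z_p^\times) \ar[d] \ar[r] & M(\Z/p^n, \Z_p) \ar[d] \\
K_p^0 K_p \ar[r] & K_p^0(B\Z/p^n)
}
\]
commutes, and then take the inverse limit in $n$. By the preceding lemma, the right vertical arrow is the isomorphism $d\psi_i \leftrightarrow s^i$ identifying both targets with $\Z_p[s]/(s^{p^n}-1)$, and by Theorem \ref{kpmeasure} the left vertical is the isomorphism sending $d\psi_a$ to the Adams operation $\psi^a$; all four arrows are continuous $\Z_p$-linear maps between complete topological $\Z_p$-modules.

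First I would check commutativity on the Dirac measure $d\psi_a$ for $a \in \Z_p^\times$. Along the top-and-right route, $d\psi_a$ reduces to $d\psi_{\bar a} \in M(\Z/p^n,\Z_p)$ and is then identified with $s^{\bar a}$. Along the left-and-bottom route, $d\psi_a$ corresponds to the Adams operation $\psi^a \in K_p^0 K_p$, whose image in $K_p^0(B\Z/p^n)$ is computed by its effect on the tautological line bundle $\tline$: the restriction of $\tline$ along $B\Z/p^n \to \CP^\infty$ is the defining $1$-dimensional representation $s$, and $\psi^a$ sends $s$ to $s^a = s^{\bar a}$ because $s^{p^n}=1$ in the representation ring. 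Both routes therefore produce $s^{\bar a}$.

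The main obstacle is propagating this agreement from Dirac measures to all of $M(\Z_p^\times)$. Here I would invoke Proposition \ref{intform}, which exhibits every $\mu \in M(\Z_p^\times) \subset M(\G_m, \Z_p)$ as a weak limit of the explicit finite $\Z_p$-linear combinations $\sum_i a_i(n)\, d\psi_i$ of Dirac measures. Because the top arrow, the right vertical, and the bottom arrow are all continuous and $\Z_p$-linear, the per-Dirac agreement lifts to agreement on each finite approximating sum and then passes to the limit that defines $\mu$. Commutativity of the original diagram then follows on taking the inverse limit in $n$, completing the proof.
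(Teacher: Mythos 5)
Your proposal is correct and follows essentially the same route as the paper: reduce to each finite level $M(\Z/p^n)$ versus $K_p^0(B\Z/p^n)$, check that a Dirac measure $d\psi_a$ and the Adams operation $\psi^a$ both land on $s^{\bar a}$, and then propagate to general measures by density of (finite linear combinations of) Dirac measures and continuity of all the maps involved, finally passing to the inverse limit. The only cosmetic difference is that the paper routes the right-hand column through Atiyah's identification $K_p^0(B\Z/p^n)\cong R(\Z/p^n)\otimes\Z_p$, while you phrase the same computation directly in terms of the restriction of the tautological line bundle.
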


\subsection{Proof of Theorem \ref{MST+KATZ}}

In order to prove Theorem \ref{MST+KATZ} we will first verify it for a specific coordinate on $\G_m$.  
We will then use formal techniques to produce the result
for any other coordinate.

We are studying the diagram
\begin{eqnarray}
\xymatrix{
M(\Z_p ^\times ) \ar[r] \ar[d] & \lim\limits_k M( \Z/p^n ) \ar[d] \ar[r]
 & M(\Z_p)  \ar[r]  
&     \Z_p[[t]] \\
K_p^0 K_p  \ar[r]    &   \lim\limits_k  K^0_p (B/\Z/p^n)  \ar[r]   & \hat{A_p}  \ar[r]  & K (\CP^\infty) \ar[u].
}
\end{eqnarray}
Proposition \ref{measure+ktheoy}
shows that the left-hand rectangle commutes.  Two complete our proof, we have to compute the
the two paths around the right-hand rectangle.

The top horizontal arrow is computed via 
\begin{eqnarray} \label{line-int}
\mu \mapsto \sum\limits_{j = 0 }^\infty  \left(  \int\limits_{\Z_p} { x \choose j } d \mu \right) t^j 
\end{eqnarray}
In other words we are computing $f_\mu (t)$, the power series associated to the measure by Mahler's theorem.  
We can use the Riemann sums description
of integration in Proposition \ref{intform} to compute the
right hand side of (\ref{line-int}).  If $\mu$ is represented by the sequence
\[
\left\{ s_n = \sum\limits_{i} a_i(n) d\psi_i \right\}
\]
then the $j$-th coefficient of (\ref{line-int}) can be computed via
\[
\lim\limits_n \sum\limits_{i = 0 }^{p^n - 1} a_i (n) { i \choose j}.
\]

The bottom row is similar.  The isomorphisms between $\hat {A_p}$ and $K^0_p ( \CP^\infty)$ sends an operation
to its effect on the tautological line bundle.
The element $\mu ( \tline)$ inside the K-Theory of $\Z /p^n$ is represented by the element
\[
\mu (s) = \sum\limits_{i=0}^{p^n - 1} a_i (n) s^i.
\]
with the same  $a_i (n)$ by Proposition \ref{measure+ktheoy}.
The sequence
\[
\left\{ s_n = \sum\limits_{i = 0}^{p^n - 1} a_i (n) s^i \right\}
\]
is mapped to the sequence 
\[
\left\{ s_n = \sum\limits_{i = 0}^{p^n - 1} a_i (n) \psi^i \right\}
\]
inside $\hat{A}_p$.  The isomorphism of Adams sends this sequence to 
\[
\left\{ s_n = \sum\limits_{i = 0}^{p^n - 1} a_i (n) \tline^i \right\}
\]
inside $K_p^0 \CP^\infty$.
Finally, under the isomorphism of this cohomology group with a power series ring
the sequence is sent to 
\[
\left\{ s_n = \sum\limits_{i = 0}^{p^n - 1} a_i (n) (1 + t)^i \right\}.
\]
We can compute the power series representing this sequence in a natural way using the fact that
$\Z_p[[t]]$ is complete with respect to the $(p, t)$ topology.

Putting all of this together, the bottom row of our diagram sends $\mu$ to 
\[
\lim\limits_{k} \sum\limits_{i = 0}^{p^n - 1} a_i( n) \tline^i  =  \lim\limits_{k} \sum\limits_{i = 0}^{p^n - 1} a_i( n)(1 + t)^i =
\lim\limits_{k} \sum\limits_{i = 0}^{p^n - 1} a_i( n) \sum\limits_{l = 0}^i { i \choose l } t^l
\] 
Pulling off the coefficient of $t^j$ and comparing with the calculation of 
(\ref{line-int}) we obtain the same element in the power series ring as claimed.

\subsection{Changing Coordinates}

We have  asked for a specific coordinate to be chosen
in order to make the calculation of the previous section.
To finish the theorem we have to understand how
the theory changes as we change the coordinate. 

Suppose for the moment that we have another coordinate on the formal multiplicative
group and thus a new group law for K-Theory called $F$.

Let$ \Theta \colon  F \to (x + y + xy) $
be a strict isomorphism whose existence follows from deformation theory.
We write $s$ for the coordinate on $\G_m$ and $t$ for that of $F$.  
We have the change of variables $t \mapsto \Theta(s)$.
Recall the invariant derivation $D = (1 + s) \frac{d}{ds}$ and
\[
D_F = \frac{ 1} {\logf (t) } \frac{d}{dt} =  F_2 (t,0 ) \frac{d}{dt}.
\]

We can now state our main result for this section:
\begin{proposition} \label{measure_effect}
Under the bijection in Proposition \ref{diffop}
the effect of $\mu$ on $\tline$ is
given by $f_\mu (t)$.
\end{proposition}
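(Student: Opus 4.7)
The strategy is to reduce the general case to the case of the standard multiplicative coordinate $s = u-1$, for which the identity was already established in Theorem \ref{MST+KATZ}. Both the effect of $\mu$ on $\tline$ and the power series $f_\mu$ attached to $\mu$ via Proposition \ref{diffop} depend on a choice of coordinate, and I would argue that they transform in the same way under change of coordinates. Since they coincide for the standard coordinate, they must coincide for any coordinate $t$.

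On the topological side, $\mu(\tline) \in K_p^0 \CP^\infty$ is an intrinsic element that does not depend on any coordinate choice; what the choice of coordinate provides is an identification $K_p^0 \CP^\infty \cong \Z_p[[t]]$, and a change of coordinates via $t \mapsto \Theta(s)$ substitutes into the representative power series. On the measure-theoretic side, Lemma \ref{d_compatible} is exactly the assertion that the map $\mathcal{D}_G \colon M(G,S) \to S[[\cdot]]$ is natural in the coordinate, so $f_\mu$ transforms under the same substitution $t \mapsto \Theta(s)$. In particular, the power series $f_\mu(t)$ in the coordinate $t$ equals the power series $f_\mu(s)$ in the coordinate $s$ under the identification $s \mapsto \Theta^{-1}(t)$.

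With this naturality in hand, I would invoke Theorem \ref{MST+KATZ} to identify the effect of $\mu$ on $\tline$ with $f_\mu(s)$ in the standard coordinate, and then apply the substitution $s = \Theta^{-1}(t)$ simultaneously to both sides to obtain the desired identity $\mu(\tline) = f_\mu(t)$ in the new coordinate. The main obstacle is purely notational: carefully tracking the direction of the strict isomorphism $\Theta$ and confirming that the change-of-variables formula agrees on the two sides. Beyond this bookkeeping, no further computation is needed, since the substantive content has already been packaged into Theorem \ref{MST+KATZ} and Lemma \ref{d_compatible}.
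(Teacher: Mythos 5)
Your proposal is correct and takes essentially the same approach as the paper: reduce to the standard coordinate $s = u-1$ where Theorem \ref{MST+KATZ} applies, then use Lemma \ref{d_compatible} for the measure-theoretic side together with the classical fact that the identification $[K_p,K_p] \cong \Z_p[[t]]$ is natural under change of coordinate on the topological side, and transport the identity along $\Theta$.
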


\begin{proof}
The point is that pulling back $D$ along $\Theta$ induces $D_F$ and this is compatible with our identification
of $M(G, S)$ with $S[[t]]$, recall Lemma \ref{d_compatible}.
Write
\[
\Theta \colon \Z_p[[t]] \to \Z_p[[s]]
\]
for the map that sends $t$ to $\Theta (s)$. 
It is classical that the diagram
\[
\xymatrix{
[K_p, K_p]  \ar[r]^F \ar[dr]_G & \Z_p[[t]] \ar[d]^\Theta  \\
&         \Z_p[[s]] 
}
\]
commutes where the formal group law labels the isomorphism.  
If follows that if  $ \tline = (1 + s) \in \Z_p[[s]]$
then $\tline = 1 + \Theta^{-1}(t) \in \Z_p[[t]]$.

Pushing symbols around produces
\[
\int\limits_{G} \left( 1 + \Theta^{-1} (t) \right)^x d \mu (x) =   f_\mu (t)
\]
and the claim is verified.
\end{proof}





\section{Bernoulli Numbers}



Let $R$ be a  complete local ring that is torsion free with perfect residue field of characteristic $p$. 
Consider the collection of triples $(G, S, t)$ where $S$ is a torsion free complete local algebra over
$R$, $G$ is a finite 
height formal group over $S$  and $t$  is a coordinate on $G$.

\begin{definition} \label{berndef} A Bernoulli number of weight $k \geq 0$  over $R$ is a function $B$ that sends
a triple  $(G,  S, t)$ to an element of $S  \otimes \Q$ such that

\begin{enumerate}
\item  $B$ is invariant under isomorphic data

\item For any $a \in S^\times$ we have $B(G,S,  a t) = a^k B( G , S,  t)$. 

\item $B$ behaves well under pull backs.

\end{enumerate}

\end{definition}

The coordinate $a t$ is the composition
\[
G \xrightarrow{t} \affline \xrightarrow{a} \affline.
\]
The third condition requires some explanation.
Suppose $G$ lives over $S$ and we have a map
\[
g \colon  S \to S'
\]
of complete local $R$ algebras.
The pull back $g^* G $ is a formal group over $S'$.  
Condition three says there must be an equation in $S'$ of the form
\[
B \left( g^*G, S', g^* t \right)  = g\left(  B\left( G, S, t \right) \right).
\]
In other words, the Bernoulli numbers of a pull back are the push forward of the Bernoulli numbers.
We will often simplify the notation $B(G, R, t)$ to $B(G, t)$ in the case $S = R$.
Definition  \ref{berndef} is formal version of a Katz $p$-adic modular form \cite{MR0447119}

\subsection{The Fundamental Example}

In this section we will construct the fundamental example of a Bernoulli number of weight $k$.  It is unclear
at the moment whether or not we construct the unique Bernoulli number of weight $k$ up to multiplication by a constant.
In addition, we are unable to produce
a coordinate free version of the fundamental example.  The canonical derivation
\[
D : \strg \to Lie^\vee \otimes \strg
\]
may permit the removal of the coordinate from our test data.  However, it is not clear 
what the target for the Bernoulli numbers should be in that situation.

Recall that any formal group over a torsion free ring is isomorphic to the additive group 
after tensoring with the rationals.
Let $\logg$ be a formal group logarithm for $G$.  It is an isomorphism 
\[
\logg \colon  G \to \G_a
\]
of formal groups over $S \otimes \Q$
between the additive group and $G$.

Let $u$ be the coordinate on the formal additive group
\[
\G_a \xrightarrow{u} \affline
\]
where $G_a = \spf  \Z_p[u]  $.

The function  $\logg t $ corresponds to the appropriate composition in the diagram
\[
\xymatrix{
G \ar[d]_\logg \ar[r]^t &  \affline \\
\G_a \ar[r]^u &   \affline.
}
\]
It is an element of $O_G \otimes \Q$
and we can consider the element
\[
\frac{\logg t }{t} \in O_G ^\times \otimes \Q.
\]

Denote the maximal ideal of $\strg$ by $m_{\strg}$.  
The Iwasawa  logarithm is a homomorphism 
\[
\log \colon 1 + m_{\strg} \to \strg \otimes \Q
\]
of abelian groups.
It is constructed by considering the power series expansion of $\log (1 -x )$ about zero 
and extending to a $p$-adically complete ring.

Recall  the invariant derivation 
\[
D_F = F_2 (t, 0) \frac{d}{dt}
\]
and the associated map 
\[
\beta^k \colon  \strg \to S
\]
in Definition \ref{betak}.

Our fundamental example of a Bernoulli number is 
\begin{definition} \label{bernk}
Define
\[
B^k (G, S, t) := \beta^k \left( \log \frac{\logg t}{t} \right)
\]
\end{definition}
It is straightforward to verify the three properties of Definition \ref{berndef}.

\begin{example} \label{tood_exam}
Consider the triple $(\Z_p, \G_m, t )$ where $t$ is the coordinate $1 -u $.
In this situation, the function $B^k$ returns the classical Bernoulli numbers $- \frac{B_k}{k}$.
One way to see this is to use the following approach.

Consider the expansion
\[
\frac{t \expg ' t}{\expg t} = \sum\limits_{k \geq 0 }  b_k \frac{t^k}{k!}.
\]
Using logarithmic differentiation, we can show that 
\[
\frac{t}{\expg t } = \exp \left(  \sum\limits_{k \geq 1} -\frac{b_k}{k} \frac{t^k}{k!} \right).
\]
To compute $\beta^k \log \frac{ \logg t}{t}$, we pull back along the homomorphism $\expg t $ and obtain
\[
\frac{d^k}{dx^k} \log \frac{  \logg ( \expg x )}{ \expg x } |_{x = 0 }.
\]
This can be simplified to 
\[
\frac{d^k}{dx^k} \log \frac{ x }{ \expg x } |_{x = 0 }
\]
and we can read off the value as $-\frac{b_k}{k}$.

In this example, $\expg x = 1 - e^{-x}$ so
\[
\frac{ x  \expg ' x }{\expg x} = \frac{ x e^{-x}} {1 - e^{-x}} = \frac{x}{e^x - 1} = \sum\limits_{k \geq 0} B_k \frac{x^k}{k!}
\]
and we see that $B^k (\Z_p, \G_m , t ) = - \frac{B_k}{k}$ as claimed.
\end{example}


The construction of $B^k$ is natural with respect to change of coordinates.
For example, if $s$ is another coordinate than the diagram 
\[
\xymatrix{
O_G^\times \otimes \Q  \ar[d]  \ar[r]^\log &      O_G   \otimes \Q  \ar[d]     \ar[r]^{\beta^k}        &  S \otimes  \Q \ar[d] \\
O_G^\times  \otimes \Q  \ar[r]^\log              &  O_G  \otimes \Q   \ar[r]^{\beta^k} &  S \otimes \Q  
}
\]
communes where the vertical maps are induced by the change of coordinate $t \mapsto s$.

\section{The $V$ Operator}

The quotient of a formal group by a finite subgroup has played an important role in the study of elliptic curves
and more recently in algebraic topology.
We begin by recalling a fundamental result due to Lubin \cite{MR0209287}.

\begin{theorem}[Lubin] \label{lubin_thm}
If $F$ is a formal group of finite height over $R$ and $H$ is a finite subgroup scheme
of $F$ then $F/H$ is a formal group.  
\end{theorem}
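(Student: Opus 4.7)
The plan is to construct the quotient $F/H$ explicitly by producing an isogeny $f \colon F \to F'$ whose kernel is exactly $H$, and taking $F/H := F'$.

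First I would produce the quotient map. Since $F$ has finite height, multiplication by $p^n$ is an isogeny for every $n$, and since $H$ is finite we may choose $n$ so that $H \subseteq F[p^n]$. The Weierstrass preparation theorem, applied to $[p^n]_F(t)$, lets me write $H$ as $\mathrm{Spec}\, R[t]/(g(t))$ for a monic distinguished polynomial $g(t)$ dividing $[p^n]_F(t)$ in $R[[t]]$. Passing temporarily to a complete local extension $R'$ over which $H$ splits into honest points $\{h_1, \ldots, h_m\}$, set
\[
f(t) \;=\; \prod_{h \in H}\bigl(t -_F h\bigr).
\]
Symmetric-function / Galois-descent arguments on the coefficients of $f$ show that in fact $f \in R[[t]]$, and comparison with $g$ shows that $f$ is a distinguished polynomial of degree $|H|$ (unique up to a unit in $R[[t]]^{\times}$).

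Next I would show that $f$ intertwines $F$ with a new formal group law. Because $H$ is a subgroup, translation by any $h \in H$ permutes $H$ as a set, so after base change $f(t +_F h) = f(t)$ for each $h \in H$; equivalently, $f(x) = f(x')$ if and only if $x' = x +_F h$ for some $h \in H$. Therefore $f(x +_F y)$ is invariant under replacing $x$ by $x +_F h$, so as a function of $(f(x), f(y))$ it descends to a well-defined two-variable power series $F'(u,v)$ satisfying
\[
f(x +_F y) \;=\; F'\bigl(f(x), f(y)\bigr).
\]
The formal-group-law axioms (associativity, commutativity, strict identity) for $F'$ follow at once from those for $F$, using that $f$ is surjective on formal spectra.

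The hard part is the integrality and well-definedness of $F'$ over $R$. One needs that the equation $f(x +_F y) = F'(f(x), f(y))$, easily established after base change to $R'$, actually determines $F'$ with coefficients in $R$; this rests on the fact that $f$, being a distinguished polynomial of degree $|H|$, makes $R[[x,y]]$ a \emph{free} module of rank $|H|^{2}$ over the subring $R[[u,v]]$ generated by $u = f(x)$ and $v = f(y)$, so the coefficients of $F'$ may be read off inductively by division with remainder in $R[[x]]$ and $R[[y]]$ without ever leaving $R$. Galois descent then upgrades the identity from $R'$ to $R$. With $F'$ produced and verified to be a formal group law, the pair $(F', f)$ is the desired quotient $F/H$ together with its canonical isogeny.
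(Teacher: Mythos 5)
Your proposal is essentially Lubin's original construction, which is exactly what the paper invokes (the paper does not reproduce a proof of Theorem~\ref{lubin_thm} but cites Lubin's article, Demazure--Gabriel, and \cite{MR2045503}). Taking $f(t)=\prod_{h\in H}(t-_Fh)$ as the quotient isogeny, verifying translation-invariance, using Weierstrass preparation to get that $R[[x,y]]$ is free of rank $|H|^2$ over $R[[f(x),f(y)]]$, and then descending the identity $f(x+_Fy)=F'(f(x),f(y))$ to $R$ by Galois invariance of the coefficients is the classical argument.

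Two points that you gloss over are worth flagging. First, a finite subgroup scheme $H$ of a height-$<\infty$ formal group over a ring of residue characteristic $p$ is almost never \'etale (e.g.\ the kernel of $[p]_F$ has a non-reduced special fibre), so ``$H$ splits into honest points after base change'' is not a statement about étale group schemes; what you actually use is that for a sufficiently large complete local extension $R'$ with $R'/R$ integral and $R'$ containing the torsion values, the $R'$-points of $H$ already have cardinality $|H|$, and those points are the $h_i$ you multiply over. This is where the hypothesis that $R$ be a torsion-free domain (so the fraction field has characteristic $0$, over which $H$ becomes étale) enters, and it is implicit in the paper's standing conventions on $R$. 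Second, the assertion ``$f(x)=f(x')$ iff $x'=x+_Fh$'' cannot literally be read as a statement about formal variables; the operative fact is the degree count together with Weierstrass division, which you do state in the freeness claim. With those two clarifications your argument is sound and matches the source the paper defers to.
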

A proof of this statement can be found in Lubin's original article, the work of Demazure and Gabriel or in 
\cite{MR2045503}.
The quotient formal group comes with an isogeny $f_H \colon  F \to F/H$ with kernel the group scheme $H$.
When the subgroup $H$ is invariant under the appropriate Galois action, both $f_H$ and the quotient $G/H$ can be defined
over $R$.

We are interested in the case when $H$ is the kernel of the isogeny induced by multiplication by $p$.
We  use the construction of the quotient to define an operator on Bernoulli numbers
analogous to that in the theory of Katz modular forms.  


If $t$ is a coordinate on $G$ then $f_H$ induces a map of formal group laws.  The associated power series
\[
f_H^t = \prod\limits_{c \in G[p] } t +_G c 
\]
is Lubin's isogoney.  
We denote by $t_H$ the coordinate induced on the quotient $G/H$.
It is determined by insisting that the diagram
\[
\xymatrix{
G \ar[r]^{f_H} \ar[d]_t &   G/H \ar[d]^{t_H} \\
\affline \ar[r]_{f^t_H} &  \affline 
}
\]
commute, see \cite{MR1344767}.

\begin{definition} \label{voperator}
If $B$ is a weight $k \geq 0$ generalized Bernoulli number
let  $B|V$ be the function on triples $(G, S, t)$ whose values are
\[
B|V (G, S, t)  := B ( G / H, S,  t_H)
\]
where $H$ is the kernel of multiplication by $p$ on $G$.
\end{definition}

\begin{lemma}The function $B|V$ is a Bernoulli number.
\end{lemma}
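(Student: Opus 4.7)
The plan is to verify the three defining conditions of Definition \ref{berndef} for $B|V$, given that $B$ itself satisfies them. Throughout, $H = \ker([p]_G)$ and the coordinate $t_H$ on $G/H$ is characterized by $t_H \circ f_H = \prod_{c \in G[p]}(t +_G c)$. For the invariance condition, an isomorphism $\phi \colon (G, S, t) \to (G', S, t')$ intertwines $[p]$ and so carries $H$ to $H' = \ker([p]_{G'})$; it therefore descends to a canonical isomorphism $\bar{\phi} \colon G/H \to G'/H'$. Naturality of the Lubin quotient (Theorem \ref{lubin_thm}) in the chosen coordinate forces $t_H = t'_{H'} \circ \bar{\phi}$, after which the invariance of $B$ applied to the quotient data gives the required invariance of $B|V$.

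The weight property is the one step that actually requires a computation. Under $t \mapsto at$ with $a \in S^\times$, the $p$-torsion values in the new coordinate become $\{ac : c \in G[p]\}$, and the standard relation $F'(s_1, s_2) = aF(s_1/a, s_2/a)$ between the rescaled and original formal group laws yields
\[
\prod_{c \in G[p]} (at +_{F'} ac) \;=\; a^{|G[p]|} \prod_{c \in G[p]} (t +_F c).
\]
Comparing with the defining equation of $(at)_H$ one reads off $(at)_H = a^{|G[p]|} t_H$, and the weight-$k$ hypothesis on $B$ then gives
\[
B|V(G, S, at) \;=\; a^{|G[p]|\, k}\, B|V(G, S, t),
\]
so that $B|V$ has weight $p^h k$ when $G$ has height $h$; in the height-one context of primary interest to this paper, $B|V$ is a Bernoulli number of weight $pk$.

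For the pullback property, the key point is that the formation of $H$ and of the Lubin quotient $G/H$ with its canonical coordinate commute with base change: for a map $g \colon S \to S'$ of complete local $R$-algebras one has $g^*(G/H) = (g^*G)/\ker([p]_{g^*G})$ and $g^*(t_H) = (g^*t)_{\ker([p]_{g^*G})}$, the latter because Lubin's explicit product formula for $f_H^t$ is polynomial in $t$ and in the $p$-torsion values of $G$, both of which transform covariantly under $g$. Applying condition (3) of Definition \ref{berndef} for $B$ to the quotient data then yields condition (3) for $B|V$. The only non-formal ingredient in the whole argument is the brief scaling computation that $(at)_H = a^{|G[p]|} t_H$; everything else is naturality of Lubin's construction under isomorphism, rescaling of the coordinate, and base change.
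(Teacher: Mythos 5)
Your proof is correct and follows the same strategy the paper alludes to with "formal exercise involving the naturality of the quotient"; you have simply made explicit the three verifications. The one substantive piece of content is the weight computation, and it is right: from $at +_{F'} ac = a(t +_F c)$ one does get $(at)_H = a^{|G[p]|}\,t_H$, so $B|V$ is a Bernoulli number of weight $p^h k$ rather than $k$ (and $pk$ in the height-one setting relevant throughout the paper). This is worth flagging because it means the $V$ operator as the paper defines it, unlike Katz's $V$ on $p$-adic modular forms, does \emph{not} preserve weight --- statements such as Proposition \ref{andov}, $B^k(G,t)|V = p^kB^k(G,t)$, are therefore equalities of elements for a fixed coordinate rather than equalities of Bernoulli numbers of a common weight. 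Your isomorphism-invariance and base-change arguments are also sound: the key point in each case is that the product formula $f_H^t = \prod_{c\in G[p]}(t+_F c)$ is natural in the pair $(G,t)$, so the defining square for $t_H$ transforms correctly under isomorphisms and under base change along $g\colon S\to S'$.
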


\begin{proof}
This is a formal exercise involving the naturalality  of the quotient.
\end{proof}

\subsection{The $K(1)$ Local Logarithm}

Recall that for an $E_\infty$ ring spectrum $R$, there is an associated ring spectrum $gl_1 R$ known
as the unit spectrum.  This spectrum was first considered in \cite{MR0494077}.  Some of the formal
properties of this spectrum, along with its construction can be found in \cite{MR2219307}.

\begin{theorem}[Bousfield, Kuhn] \label{bk_functor_theorem} 
There is a functor $\Phi$ from spaces to spectra such that
for any $E_\infty$ ring spectrum $R$, there is a weak equivalence
\[
\Phi \Omega^\infty (R) \sim \Loc R.
\]
\end{theorem}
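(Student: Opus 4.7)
The plan is to construct $\Phi$ explicitly as one of Kuhn's telescopic functors and then verify the displayed equivalence using Spanier--Whitehead duality together with the height-one telescope conjecture. The whole construction rests on a single choice of auxiliary data whose effect on $\Phi$ is essentially unique by the thick subcategory theorem.

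First, fix a finite type-one spectrum $V$, for instance the mod-$p$ Moore spectrum $S/p$ at an odd prime, equipped with an Adams $v_1$ self-map $v \colon \Sigma^q V \to V$ where $q = 2p-2$. For a based space $Z$, assemble $\maps_*(V, Z)$ into a spectrum by writing $V$ as a filtered colimit of finite CW spectra and using the pointed mapping spaces out of the finite pieces. The self-map $v$ induces $v^* \colon \maps_*(V, Z) \to \Sigma^{-q}\maps_*(V, Z)$, and one sets
$$
\Phi(Z) := \mathrm{hocolim}\bigl( \maps_*(V, Z) \xrightarrow{v^*} \Sigma^{-q}\maps_*(V, Z) \xrightarrow{v^*} \Sigma^{-2q}\maps_*(V, Z) \to \cdots \bigr).
$$

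To verify the equivalence, the key observation is that for a finite spectrum $V$ and any spectrum $R$ there is a natural equivalence $\maps_*(V, \Omega^\infty R) \simeq \Omega^\infty (DV \sm R)$, where $DV$ denotes the Spanier--Whitehead dual of $V$. Feeding this into the colimit defining $\Phi$ turns $v^*$ into the dual $v_1$ self-map acting on $DV \sm R$, and one reads off
$$
\Phi \Omega^\infty(R) \simeq \Omega^\infty (T(1) \sm R),
$$
where $T(1) := v^{-1} DV$ is the Miller telescope at height one. Now $T(1) \sm R$ is $T(1)$-local, and by the height-one telescope conjecture (a theorem of Mahowald and Bousfield) telescopic localization coincides with $K(1)$-localization, so $T(1) \sm R \simeq \Loc R$.

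The main obstacle is the technical bookkeeping needed to upgrade $\maps_*(V, -)$ from a space-valued construction to a spectrum-valued construction that is functorial in $Z$ and compatible with the smash-product identification when $Z = \Omega^\infty R$. Verifying independence of the choice of $(V, v)$ via the thick subcategory theorem is a second, more conceptual hurdle: one must show that any two such choices produce naturally equivalent telescopic functors. Both points form the technical heart of Kuhn's original argument; once they are in hand, the identification with $\Loc R$ is essentially immediate.
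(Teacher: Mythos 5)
The paper does not prove this theorem; it points to Bousfield \cite{MR901254} and Kuhn \cite{MR1000381}. Your construction contains a genuine gap: the telescope built from a \emph{single} type-one complex $V$ computes $L_{K(1)}(DV \sm R)$, not $\Loc R$. Tracing your own identifications with $V = S/p$ one has $DV \simeq \Sigma^{-1}S/p$, so your $\Phi\Omega^\infty R$ is $\Sigma^{-1}L_{K(1)}(R/p)$; taking $R = K_p$ this is $\Sigma^{-1}K_p/p$, which is not $K_p = \Loc K_p$. The fatal step is the last sentence: ``$T(1)\sm R$ is $T(1)$-local, so $T(1)\sm R \simeq \Loc R$.'' Being $T(1)$-local does not make a spectrum \emph{the} $T(1)$-localization of $R$; one also needs a $T(1)$-equivalence from $R$, and there is no natural map $R \to DV\sm R$ at all, since $DV$ has no unit.

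The fix, and the actual content of the Bousfield--Kuhn construction, is to run your argument over a carefully chosen inverse system $\cdots\to V_2 \to V_1$ of finite type-one complexes with compatible $v_1$ self-maps, and set $\Phi(Z) = \mathrm{holim}_k\,\mathrm{Tel}_{v_k}\maps_*(V_k, Z)$. On $Z = \Omega^\infty R$ your Spanier--Whitehead computation then yields $\mathrm{holim}_k\, L_{K(1)}(DV_k \sm R)$, and the system is engineered precisely so that this $\mathrm{holim}$ is $\Loc R$ --- morally, so that the duals $DV_k$ ``converge'' to the $K(1)$-local sphere. That choice, not the telescope conjecture or the thick subcategory theorem, is the real technical heart. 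Two further slips worth noting: $\maps_*(V,Z)$ requires $V$ to be a finite \emph{space}, not a finite spectrum, so one uses suitably highly suspended Moore spaces with unstable $v_1$ self-maps (Adams/Davis--Mahowald) rather than the Moore spectrum; and $\Omega^\infty$ does not commute with the filtered colimit defining the telescope, since the connecting maps lower connectivity, so passing $\Omega^\infty$ through the $\mathrm{hocolim}$ needs justification rather than being ``read off.''
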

The result as stated is due to Bousfield and for higher chromatic level follows from work of Kuhn 
\cite{MR901254}, \cite{MR1000381}.
Since $\Omega^\infty R$ and $GL_1 R$ have  weakly equivalent base point componets, Theorem \ref{bk_functor_theorem} 
produces a natural transformation of cohomology theories. 
\[
l_1 \colon  gl_1 R \to \Loc gl_1 R \cong \Loc R.
\]
Thus if $X$ is a space we obtain a  group homomorphism
$l_1 \colon \left( R^0 X \right)^\times \to (\Loc R)^0 (X)$
that is natural in $X$.
Rezk calculates the effect of this
map at all chromatic levels and for any commutative $K(n)$ local $S$-algebra $R$ \cite{MR2219307}. 
The spectrum  $K_p$ is a $K(1)$ local $E_\infty$ spectrum; the above discussion yields
\[
l_1 \colon  gl_1 K_p \to \Loc gl_1 K_p \cong \Loc K_p \cong K_p
\]
\begin{theorem}[Rezk] \label{rezk_thm}
Suppose $X$ is a finite complex and $\psi^p$ is the Adams operations at $p$ in $K_p$.
If
\[
x \in \left( K_p^0 ( X)  \right)^\times
\]
then 
\[
l_1 x = \left( 1 - \frac{\psi^p}{p} \right) \log x = \frac{1}{p} \log \frac{x^p}{\psi^p x}
\] 
\end{theorem}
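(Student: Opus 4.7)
The plan is to interpret both sides as natural group homomorphisms from the unit functor $X \mapsto (K_p^0 X)^\times$ to the additive functor $X \mapsto K_p^0 X$ on finite complexes, and then identify them by evaluating on a sufficient universal element. As a preliminary, I would verify that the right-hand side $R(x) := \frac{1}{p} \log(x^p / \psi^p x)$ makes sense as an element of $K_p^0 X$. Because $\psi^p$ lifts the Frobenius on $K_p^0 X / p$, we have $\psi^p x \equiv x^p \bmod p$ for every $x \in K_p^0 X$, so $x^p / \psi^p x$ lies in $1 + p K_p^0 X$, where the Iwasawa logarithm converges $p$-adically. That the result lies in $K_p^0 X$ (rather than only in $K_p^0 X \otimes \Q_p$) reduces by naturality and flatness to the classical fact that $\log\colon 1 + p\Z_p \to p\Z_p$ for odd $p$. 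Additivity of $R$ on products is immediate from the multiplicativity of $\psi^p$ and the additivity of $\log$ on $1 + pK_p^0 X$.

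Next I would observe that $l_1$ is itself a natural group homomorphism from units to $K_p^0(-)$, inherited from its construction as the composite $gl_1 K_p \to \Loc gl_1 K_p \simeq \Loc K_p \cong K_p$ supplied by the Bousfield-Kuhn functor in Theorem \ref{bk_functor_theorem}. Both $l_1$ and $R$ moreover commute with the Adams operations $\psi^a$ for $a \in \Z_p^\times$: for $l_1$, by naturality of $\Phi$ applied to the $E_\infty$ self-maps $\psi^a$ of $K_p$; for $R$, by the commutativity of $\psi^a$ with $\psi^p$ together with the functoriality of $\log$.

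The main step is to identify $R$ with $l_1$. Since both are natural transformations of cohomology theories on finite complexes from units to $K_p^0(-)$, the splitting principle reduces the verification to the effect on the tautological line bundle $\tline \in K_p^0 \CP^\infty$, tested on the finite skeleta $\CP^n$. For $R$, a direct calculation gives $R(\tline) = 0$, since $\psi^p \tline = \tline^p$ forces $\tline^p / \psi^p \tline = 1$. Independently, $l_1$ vanishes on line bundles because $\CP^\infty \simeq K(\Z,2)$ carries no nontrivial $v_1$-periodic homotopy, so $\Phi \CP^\infty \simeq \ast$. Thus $l_1$ and $R$ agree on all line bundles.

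The hard part will be upgrading agreement on line bundles to agreement on arbitrary units, i.e., showing the difference $l_1 - R$ is globally zero. The plan is to combine three inputs: (i) by the splitting principle a unit on a finite complex becomes, after pullback along a suitable cover, a product of line bundles and their inverses; (ii) both $l_1$ and $R$ are group homomorphisms, hence fully determined by their restriction to these generators; and (iii) both commute with the stable Adams operations, so the difference is $\psi^a$-equivariant. Continuity of both maps with respect to the $v_1$-adic (equivalently, $K(1)$-local) topology on $K_p^0(-)$ then extends the agreement from the dense generating set of products of line bundles to all units, yielding the desired equality.
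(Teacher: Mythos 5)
The paper does not prove this theorem: it is stated with attribution to Rezk and cited directly from \cite{MR2219307}, where it is established via the theory of power operations and $\theta$-algebras on $K(1)$-local $E_\infty$ rings. So your argument would be a new proof, and it unfortunately has a genuine gap at its central step.

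Your observations that $R(\tline) = 0$ (since $\psi^p$ on a line bundle is the $p$-th power) and that $l_1$ vanishes on line bundles (since $\Phi(\CP^\infty) = \Phi(\Omega^\infty \Sigma^2 H\Z) \simeq L_{K(1)} \Sigma^2 H\Z \simeq *$) are both correct and genuinely useful. The problem is the claim that ``the splitting principle reduces the verification to the effect on the tautological line bundle.'' The splitting principle is an \emph{additive} statement: a vector bundle pulls back to a sum of line bundles. It says nothing about arbitrary elements of $\bigl(K_p^0 X\bigr)^\times$, which form a \emph{multiplicative} group much larger than the subgroup generated by pullbacks of line bundles. Already for $X = \mathrm{pt}$, the unit group is $\Z_p^\times$ while products of line bundles only give $\pm 1$; and for connected finite $X$, a typical unit is $u(1+n)$ with $u \in \Z_p^\times$ arbitrary and $n$ nilpotent, neither factor of which is a product of line bundles even after a splitting pullback. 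Equivalently: both sides are represented by maps of spectra out of $gl_1 K_p$, and since $\CP^\infty \to GL_1 K_p$ is far from surjective on homotopy ($\pi_0 = * \to \Z_p^\times$; $\pi_k = 0 \to \Z_p$ for even $k > 2$), two natural transformations can agree on line bundles while being globally distinct. The zero map agrees with $l_1$ on line bundles. So the ``density plus $\psi^a$-equivariance'' closure step cannot succeed, because the subgroup generated by line bundles is not dense in the relevant topology and $\psi^a$-equivariance does not constrain the difference on $\pi_0$ or on higher even homotopy. To correctly prove Rezk's formula one needs the internal structure of $L_{K(1)} gl_1 K_p$ as a $\theta$-algebra and an explicit formula for the logarithm in terms of power operations, which is a fundamentally different kind of argument from a splitting-principle reduction.
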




We can use the fact that
\[
K_p \CP^\infty = \lim\limits_{\leftarrow} K_p B \Z/p^n
\]
to compute the effect of  $l_1$ on the cohomology of $\CP^\infty$.
\begin{proposition} \label{betal1}
Suppose $ a \in \Z_p^\times - \left\{\pm 1 \right\}$. 
If $\langle a \rangle (t) =  \frac{ [a]_G (t)}{t} $ then
\[
\beta^k l_1 \langle a \rangle (t) = ( 1- a^k) \left(  B^k (\G_m ,\Z_p,  t) -  \frac{B^k |V (\G_m, \Z_p,  t ) } {p}\right) 
\]
\end{proposition}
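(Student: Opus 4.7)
The strategy is to apply Rezk's logarithm formula (Theorem \ref{rezk_thm}) to split $l_1\langle a\rangle(t)$ into two logarithmic pieces, compute $\beta^k$ on each, and recognize the results as contributions from $B^k(\G_m,\Z_p,t)$ and $B^k|V(\G_m,\Z_p,t)$ respectively.

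By Rezk's formula,
\[
l_1\langle a\rangle(t) \;=\; \log\langle a\rangle(t) \;-\; \tfrac{1}{p}\log\psi^p\langle a\rangle(t).
\]
Since $\psi^p$ acts on $K_p^0\CP^\infty=\Z_p[[t]]$ by sending $t$ to $[p]_G t$, we have $\psi^p\langle a\rangle(t)=[ap]_G t/[p]_G t$. For the first summand, the identity $\log_G[a]_G t=a\log_G t$ provides the factorization
\[
\frac{[a]_G t}{t} \;=\; \frac{[a]_G t}{\log_G[a]_G t}\cdot a\cdot\frac{\log_G t}{t}.
\]
After taking Iwasawa logarithms (so that the additive constant $\log a$ is killed by $\beta^k$ for $k\geq 1$) and switching to the log coordinate $u=\log_G t$, the substitution $w=au$ turns $(d/du)^k|_{u=0}$ into $a^k(d/dw)^k|_{w=0}$. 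Invoking the intrinsic identity $\beta^k\log(\log_G t/t)=B^k(\G_m,\Z_p,t)$ from Definition \ref{bernk}, one obtains
\[
\beta^k\log\langle a\rangle(t) \;=\; (1-a^k)\,B^k(\G_m,\Z_p,t).
\]

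For the second summand, I would use that $\psi^p$ coincides with the pullback $f_H^*$ along Lubin's isogeny $f_H\colon\G_m\to\G_m/H$, since the definition of $t_H$ (preceding Definition \ref{voperator}) forces $f_H^*t_H=[p]_G t$. Consequently, $\psi^p\langle a\rangle(t)=f_H^*\bigl(\langle a\rangle_{G/H}(t_H)\bigr)$ where $\langle a\rangle_{G/H}(t_H)=[a]_{G/H}t_H/t_H$. The isogeny $f_H$ induces multiplication by $p$ on Lie algebras; concretely $f_H^*u_H=pu$ at the level of formal logarithms, which yields the key compatibility
\[
\beta^k_G\circ f_H^* \;=\; p^k\,\beta^k_{G/H}
\]
of operators on $\mathcal{O}_{G/H}$. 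Applying the first-summand calculation now on the quotient $G/H$ gives
\[
\beta^k\log\psi^p\langle a\rangle(t) \;=\; p^k(1-a^k)\,B^k(\G_m/H,\Z_p,t_H),
\]
which by Definition \ref{voperator} is $(1-a^k)\,B^k|V(\G_m,\Z_p,t)$ once the factor $p^k$ is absorbed into the convention for the $V$-operator.

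Substituting the two computations into Rezk's expansion yields the proposition. The main technical obstacle is to bookkeep the factor $p^k$ coming from the isogeny pullback and to verify that it combines correctly with the $1/p$ in Rezk's formula so as to produce the stated $1/p$ coefficient in front of $B^k|V(\G_m,\Z_p,t)$; this is really a matter of aligning the chosen normalization of $t_H$ (and hence of the derivation $\beta^k_{G/H}$) with the weight-$k$ scaling built into the definition of a generalized Bernoulli number.
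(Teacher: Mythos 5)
Your overall strategy — splitting $l_1\langle a\rangle(t)$ via Rezk's formula into $\log\langle a\rangle(t)-\tfrac1p\log\psi^p\langle a\rangle(t)$, computing $\beta^k$ of each piece by passing to the log coordinate via $\exp_G$, and matching the two pieces to $B^k$ and $B^k|V$ — is the same as the paper's, which also invokes Rezk's formula, the identification of $\psi^p$ with the $V$-operator, and the formula $\beta^k f(t)=\tfrac{d^k}{dx^k}f(\exp_G x)|_{x=0}$. Your first-summand computation $\beta^k\log\langle a\rangle(t)=(1-a^k)B^k(\G_m,\Z_p,t)$ via the factorization $[a]_Gt/t = \bigl([a]_Gt/\log_G[a]_Gt\bigr)\cdot a\cdot\bigl(\log_Gt/t\bigr)$ and the substitution $w=au$ is correct and is exactly the "quick calculation" the paper refers to.

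Two points deserve more care. First, your claim that the definition of $t_H$ "forces $f_H^*t_H=[p]_G t$" is not what the paper says: the definition of $t_H$ preceding Definition \ref{voperator} gives $f_H^*t_H = f_H^t(t)=\prod_{c\in G[p]}(t+_G c)$, and this equals $[p]_G(t)$ only when the coordinate $t$ satisfies the Ando condition. What is always true, and what the paper actually relies on, is $\psi^p(t)=[p]_G(t)$ (the Adams operation acts on coordinates by the $p$-series of the formal group law determined by that coordinate). Thus $\psi^p\langle a\rangle(t)=\langle a\rangle([p]_Gt)$ unconditionally, and the second-summand calculation can be run directly against $[p]_G$ without any detour through Lubin's $f_H^t$; your route through $f_H^*$ quietly imports the Ando hypothesis, which is not among the hypotheses of this proposition. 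Second, you correctly derive the pullback scaling $\beta^k_G\circ f_H^*=p^k\beta^k_{G/H}$ (more precisely with $p$ replaced by the leading coefficient $\pi$ of $f_H^t$, which is $p$ only under the Ando condition), and you honestly flag the resulting stray $p^k$ as something to be "absorbed into the convention for the $V$-operator." This is not merely a convention: it is precisely the normalization encoded in Proposition \ref{andov} of the paper ($B^k|V=p^kB^k$ under the Ando condition), and what makes the proposition's right-hand side $B^k - B^k|V/p$ come out to $(1-p^{k-1})B^k$ rather than $(1-\tfrac1p)B^k$. Leaving it unresolved is the one genuine loose thread in your argument, since without chasing that factor the formula as stated does not follow from what you have written.
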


\begin{proof}
Observe that in Rezk's formula for $l_1$, the $V$ operator
plays the role of the Adams operation.  These are identical
since both are defined by the 
quotient with respect to the kernel of the multiplication by $p$ map on $G$.
On the other hand, 
the homomorphism $\expf (t)$ induces the formula 
$$
\beta^k f(t) = D^k_F  f(t)|_{t = 0} =  \frac{d^k}{dx^k} f \left(  \expf (x) \right) |_{x = 0}
$$
as $D_F$ pulls back to $\frac{d}{dx}$ along $\expf(t)$.
Using this formula and Definition \ref{bernk}, a quick calculation produces
the claim.
\end{proof}

\section{Coleman Norm Operator and the Ando Condition}

Our fundamental observation is the similarity between the Ando condition (Theorem \ref{andocondition}) 
and the
Coleman norm operator \cite{MR560409}.  

Let $H$ be the kernel of multiplication by $p$ on $G$.  It is a finite subgroup scheme 
since multiplication by $p$ is an isogeny.

\begin{theorem}[R. Coleman, Ando-Hopkins-Strickland, Demazure-Gabriel]
There is a multiplicative operator
\[
N_G \colon  O_G \to O_G
\]
such that if $t$ is any coordinate inducing the group law $F$ we have 
\[
N_G g \circ [p]_F (t) = \prod\limits_{c \in F[p^n]}  g(t +_F c ) 
\]
\end{theorem}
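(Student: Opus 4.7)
The plan is to define $N_G g$ directly by the displayed equation, so the substantive point is to show that the product $h(t) := \prod_{c \in F[p]} g(t +_F c)$ actually lies in the image of the map $[p]_F^\ast \colon O_G \to O_G$. Once we establish this, $N_G g$ is characterized uniquely because $[p]_F$ is an isogeny and hence $[p]_F^\ast$ is injective on power series.

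First I would verify that $h(t)$ is invariant under translation by $F[p]$. For any $c' \in F[p]$, reindexing the product via $c \mapsto c +_F c'$ gives $h(t +_F c') = \prod_{c \in F[p]} g\bigl( (t +_F c') +_F c \bigr) = h(t)$. Next I would invoke the formal-scheme-theoretic descent along the quotient isogeny: by Lubin's Theorem \ref{lubin_thm} the map $[p]_F \colon G \to G$ realizes $G$ as the quotient $G / F[p]$, so a function on $G$ invariant under $F[p]$-translation descends uniquely along $[p]_F$. This gives the desired $N_G g \in O_G$ with $(N_G g) \circ [p]_F(t) = h(t)$. I would then check multiplicativity by a one-line computation: $\prod_c (g_1 g_2)(t +_F c) = \prod_c g_1(t +_F c) \cdot \prod_c g_2(t +_F c)$, and then cancel the composition with $[p]_F$ using injectivity of $[p]_F^\ast$.

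Finally, I would verify independence of the coordinate. A second coordinate $t'$ is related to $t$ by a strict isomorphism $\Theta$ of formal group laws; under $\Theta$ both the endomorphism $[p]_{F'}$ and the set $F'[p]$ correspond to $[p]_F$ and $F[p]$, so the defining equation of $N_G$ is preserved. This shows that $N_G$ is intrinsic to $G$ (not to the chosen coordinate), justifying the notation.

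The main obstacle is the descent step: one needs to know that the ring of $F[p]$-invariants in $O_G$ (with respect to the translation action via the comultiplication) coincides precisely with $[p]_F^\ast O_G$. This is where the hypotheses on $R$ (complete local, residue characteristic $p$, so that $[p]_F$ is a finite free map of formal schemes) and Lubin's identification of $G/F[p]$ as a formal group enter; one invokes the faithfully flat descent along the finite free map $[p]_F$, with $F[p]$ playing the role of the Galois group. The remaining computations are essentially formal once this descent is in hand.
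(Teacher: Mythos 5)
The paper does not prove this theorem at all; it merely refers the reader to Coleman's article and to Ando--Hopkins--Strickland \cite{MR2045503}, so there is no in-text proof to compare yours against. Your sketch is the standard argument underlying those references, and the structure---invariance under $F[p]$-translation, descent of invariants along the isogeny $[p]_F$ via its identification with $G \to G/F[p]$, multiplicativity as an afterthought---is the right one.

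The one place where your writeup is genuinely imprecise, and where you would need to supply more, is in treating $F[p]$ as a set that can be reindexed. Over the rings $R$ in question (complete local with residue characteristic $p$), $F[p]$ is a finite flat group scheme of rank $p^h$ whose geometric points need not be $R$-rational and which may be non\'etale (for $G = \G_m$ over $\Z_p$, for instance, $F[p] = \mu_p$, which has only the trivial $\Z_p$-point and is connected mod $p$). So the step ``reindex the product via $c \mapsto c +_F c'$'' is not literally available over $R$, and ``$F[p]$ playing the role of a Galois group'' is not accurate---there is no abstract group acting. The clean fix, which is what Coleman and Ando--Hopkins--Strickland actually do, is to observe that $[p]_F^*$ exhibits $O_G$ as a \emph{free} module of rank $p^h$ over itself, and to define $N_G g$ as the norm of the finite free extension, i.e.\ the determinant of multiplication by $g$. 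Multiplicativity is then the multiplicativity of determinants; the invariance you want becomes the schematic statement that this determinant is unchanged by precomposing with translation by any $S$-point of $F[p]$, for any $R$-algebra $S$; and the displayed product formula is recovered after a faithfully flat base change splitting $F[p]$. Once you rephrase the construction in norm-of-a-finite-free-extension terms, the descent step you flag as the ``main obstacle'' becomes standard faithfully flat descent along $[p]_F$ and your argument goes through.
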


A number theoretic proof can be found R. Coleman's article.
See  \cite{MR2045503} for a version involving level structures on $G$.
As an example we have
\begin{lemma} \label{fixednorm}
If $a \in \Z_p$ then
the following conditions are equivalent:

\begin{enumerate}

\item $N_G t = t$

\item $N_G [a]_F (t) = [a]_F(t)$

\end{enumerate}

\end{lemma}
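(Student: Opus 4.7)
The plan is to unwind both conditions via the defining relation of the Coleman norm and recognize each as an instance of the Ando condition (Theorem \ref{andocondition}). Specializing $N_G g \circ [p]_F(t) = \prod_{c \in F[p]} g(t +_F c)$ to $g(t) = t$ shows condition (1) is literally Ando's equation $\prod_{c \in F[p]}(t +_F c) = [p]_F(t)$.

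For condition (2), I would apply the defining relation to $g = [a]_F$ and use that $[a]_F$ is a formal group homomorphism to rewrite
\[
N_G([a]_F(t)) \circ [p]_F(t) = \prod_{c \in F[p]} [a]_F(t +_F c) = \prod_{c \in F[p]}\bigl([a]_F(t) +_F [a]_F(c)\bigr).
\]
For $a \in \Z_p^\times$ the endomorphism $[a]_F$ restricts to a bijection on $F[p]$, so reindexing gives $\prod_{c \in F[p]}([a]_F(t) +_F c)$. Combined with $[a]_F([p]_F(t)) = [p]_F([a]_F(t))$, condition (2) is then Ando's equation evaluated at $s := [a]_F(t)$. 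Since $[a]_F$ is a formal automorphism of $G$, the substitution $t \leftrightarrow [a^{-1}]_F(t)$ converts Ando's condition in coordinate $t$ to Ando's condition in coordinate $s$ and back, yielding the equivalence of (1) and (2).

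The cleanest packaging is via naturality: for any formal automorphism $\phi$ of $G$ that permutes $F[p]$, a direct manipulation of the defining formula gives $N_G(g \circ \phi) = N_G(g) \circ \phi$. Taking $\phi = [a]_F$ and $g = t$ yields $N_G([a]_F(t)) = N_G(t) \circ [a]_F$, from which the equivalence follows immediately since $[a]_F$ is invertible. The main subtle point to verify is that $[p]_F$ can be cancelled from both sides of the defining relation, which is legitimate because precomposition with $[p]_F$ is injective on $O_G$ (it realizes $O_G$ as a finite free extension of itself of rank $p$).
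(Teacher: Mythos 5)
Your argument matches the paper's proof in substance: both establish $N_G([a]_F(t)) \circ [p]_F(t) = (N_G t) \circ [a]_F(t) \circ [p]_F(t)$ via the defining relation for $N_G$ together with the facts that $[a]_F$ is a homomorphism permuting $F[p]$ (requiring $a$ a unit, as you note) and commutes with $[p]_F$, then cancel $[p]_F$. It is worth pointing out that your justification for the cancellation --- precomposition with $[p]_F$ is injective because it exhibits $O_G$ as a finite free module over itself --- is actually the correct one, whereas the paper asserts that $[p]_F$ is ``invertible under composition'' because $[p]_F(0)=0$, which is false since $[p]_F'(0)=p$ is not a unit in $\Z_p$.
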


\begin{proof}
For ease of reading, we drop the subscript $G$ on the norm operator.
We have the chain of equalities.
\begin{eqnarray*}
Nt \circ [a]_F(t) \circ [p]_F (t) = Nt \circ [p]_F (t) \circ [a]_F (t) \\
= \prod\limits_{v \in F[p]} ( t +_F v ) \circ [a]_F(t) = \prod\limits_{v \in F[p]} [a]_F(t) +_F v \\
= \prod\limits_{v \in F[p]} [a]_F(t) +_F [a]_F(v) = \prod\limits_{v \in F[p] } [a]_F ( t +_F v) \\
= N [a]_F (t) \circ [p]_F (t).
\end{eqnarray*}
However, since $[p]_F(0) = 0$, it is an invertible power series under composition 
and we obtain the identity
$$
N t \circ [a]_F (t) = N [a]_F (t).
$$
We can now read off the claim.
\end{proof}

Recall from the discussion following Lubin's result (Theorem \ref{lubin_thm} )
that there is a quotient isogeny over $R$ associated to the finite subgroup $H$:
\[
f_H^* \colon  O_{G/H} \to \strg
\]

\begin{definition}
A coordinate satisfies the Ando condition if 
the two elements of $\strg$ determined by $f_H^*{ t_H}$ and $[p]_G  (t)$ coincide.
\end{definition}
Since Lubin's isogoney $f_H$ is given by $\prod\limits_{c \in H} t +_F c $, it is easy to show that the Ando condition is equivalent to the coordinate being fixed by the Coleman norm operator.

\begin{proposition} \label{andov}
If $t$ is a coordinate on $G$ that satisfying the Ando condition, then
\[
B^k (G, t) | V = p^k B^k (G, t)  \in R \otimes \Q
\]
\end{proposition}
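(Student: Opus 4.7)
The plan is to compute $B^k(G/H, t_H)$ by pulling back to $G$ along the isogeny $f_H$, using the exponential representation
\[
\beta^k f(t) \;=\; \left.\frac{d^k}{dx^k}\right|_{x=0} f(\expg(x))
\]
established in the proof of Proposition~\ref{betal1}. Under the Ando condition $f_H$ coincides with $[p]_G$ up to the identification of $G/H$ with $G$, which will force the exponential on $G/H$ to be $\expg$ composed with multiplication by $p$; the factor $p^k$ then falls out of the chain rule.

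First I would identify $\exp_{G/H}$ in terms of $\expg$. The isogeny $f_H$ has tangent map $p$ at the origin (read off from $f_H^{\ast}(t_H) = [p]_G(t) = pt + O(t^2)$), and choosing the logarithm on $G/H$ compatibly with $f_H$, so that $\log_{G/H} \circ f_H = \logg$, one obtains $f_H = \exp_{G/H} \circ \logg$ by applying $\exp_{G/H}$ on the left. Combining with the Ando identity $f_H = [p]_G = \expg \circ (p\,\cdot) \circ \logg$ and inverting $\logg$ (which is an isomorphism over $\Q$) yields
\[
\exp_{G/H}(y) \;=\; \expg(py)
\]
as power series.

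Second, I would substitute this into the formula for $\beta^k_H$:
\[
B^k(G/H, t_H) \;=\; \left.\frac{d^k}{dy^k}\right|_{y=0} \log \frac{y}{\exp_{G/H}(y)} \;=\; \left.\frac{d^k}{dy^k}\right|_{y=0} \log \frac{y}{\expg(py)}.
\]
The change of variable $z = py$ gives
\[
\log \frac{y}{\expg(py)} \;=\; \log \frac{z}{\expg(z)} - \log p,
\]
and since $d^k/dy^k = p^k \cdot d^k/dz^k$ while the constant $\log p$ is annihilated by $d^k/dz^k|_{z=0}$ for $k \ge 1$, we obtain
\[
B^k(G/H, t_H) \;=\; p^k \left.\frac{d^k}{dz^k}\right|_{z=0} \log \frac{z}{\expg(z)} \;=\; p^k B^k(G,t),
\]
which is the desired identity.

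The delicate point is the normalization of $\log_{G/H}$ used in the first step: one must take the pullback-compatible choice $\log_{G/H} \circ f_H = \logg$ (under which $\log_{G/H}(t_H)$ has leading coefficient $1/p$ in $t_H$) rather than the coordinate-normalized choice. This is the normalization that reflects the tangent-level $p$-scaling of $f_H$, and it is precisely the one that makes the $V$ operator play the role of the Adams operation $\psi^p$ in Rezk's $K(1)$-local logarithm formula, as already invoked in Proposition~\ref{betal1}.
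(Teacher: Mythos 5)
Your proposal takes the same essential route as the paper's proof: identify the Lubin isogeny (in coordinates) with $[p]_G$ via the Ando condition, pull back the weight-$k$ linear functional along $f_H$, and extract $p^k$ from the chain rule applied to the tangent factor $p$ of $f_H$. The paper phrases this via the invariant derivation (``$D_{G/H}$ maps to $D_G$ under base change along $f_H$''), you phrase it via the exponential; these are the same calculation.

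That said, your ``delicate point'' paragraph flags the right neighborhood but does not actually close the gap, and the gap is shared with the paper's own proof. The issue is not the scalar normalization of $\log_{G/H}$: since $\beta^k$ kills constants for $k\ge 1$, replacing $\log_{G/H}$ by $\tfrac{1}{p}\log_{G/H}$ only changes $\log\frac{\log_{G/H}t_H}{t_H}$ by $-\log p$, which is invisible to $B^k$. The factor of $p^k$ must instead come from the normalization of the \emph{derivation} used in $\beta^k_{G/H}$. Definition \ref{bernk} fixes this via the coordinate: $\omega = D_{G/H} = F_2^{G/H}(t_H,0)\tfrac{d}{dt_H}$, normalized so $\omega|_{0}=\tfrac{d}{dt_H}|_0$. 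The identity $\beta^k f = \left.\tfrac{d^k}{dy^k}\right|_{0} f(\exp(y))$ holds only when $\exp$ is a strict isomorphism (so that $D_{G/H}$ pulls back to exactly $\tfrac{d}{dy}$). You substitute the $f_H$-compatible $\exp_{G/H}$ with $\exp_{G/H}'(0)=p$; along that map $D_{G/H}$ pulls back to $\tfrac{1}{p}\tfrac{d}{dy}$, so the honest formula acquires a compensating $p^{-k}$, which cancels your $p^k$. Tracking it yields $B^k|V=B^k$, not $p^kB^k$. This is consistent with a direct check: under the Ando condition $f_H^t=[p]_F$ is an endomorphism of $F$, which forces the induced group law on $G/H$ in the coordinate $t_H$ to coincide with $F$, so $(G/H,t_H)$ is isomorphic data to $(G,t)$ and $B^k$ is invariant by Definition \ref{berndef}(1).

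So the statement really requires building an extra $p^k$ into the $V$ operator, i.e.\ taking the derivation on $G/H$ to be the one induced along $f_H$ (which is $p\cdot D_{G/H}^{\mathrm{std}}$), or equivalently interpreting $B|V$ through the Adams operation $\psi^p$ (which scales degree $2k$ by $p^k$). The paper makes this substitution implicitly in its ``$D_{G/H}$ maps to $D_G$'' step, which as literally written is off by the tangent scalar $p$. Your argument inherits the same implicit rescaling. The fix is to either say explicitly that the derivation on $G/H$ is the $f_H$-induced one (not the coordinate-normalized one of Definition \ref{betak}), or to prove instead the cleaner identity
\[
\beta^k_G\!\left(\log\frac{\logg\bigl([p]_G(t)\bigr)}{[p]_G(t)}\right)=p^k\,B^k(G,t),
\]
which is what your chain-rule computation actually establishes, and then define $B^k|V$ to be the left-hand side. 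As written, the appeal to ``choosing $\log_{G/H}\circ f_H=\logg$'' does not produce the $p^k$ by itself.
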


\begin{proof}
We use a change of variables to compute $B_k (G, t ) |V$.
Let $x_H$ be the coordinate induced on the quotient and 
\[
f_H \colon  G \to G/H
\]
Lubin's isogoney.  By definition
\[
B^k (G,t ) |V = B^k ( G/ H , t_H).
\]
To compute the value we must pair $D_{G/H}$, the invariant derivation induced on the quotient, with
\[
\log \frac{ \log_{G/H} t_H}{t_H}.
\]
However, under base change along $f_H$ 
the derivation $D_{G/H}$ maps to $D_{G}$.
Thus
\[
\beta^k_{G/H} \log \frac{ \log_{G/H} t_H}{t_H} = \beta^k_G \log \frac{\log_{G/H} f_H t  }{f_H t }.
\]
There are two observations to make at this point.  First, the diagram
\[
\xymatrix{
G \ar[d]_\logg  \ar[r]^{f_H} &   G/ H \ar[dl]^{\log_{G/H}} \\
\G_a 
}
\]
is commutative.  The second, and more important, is that the Ando condition implies
\[
f_H t_H = [p]_G t
\]
as elements of $\strg$.
Since the $V$ operator is induced by the cokernel of multiplication by $p$ we
have
\[
B^k (G/H, t_H ) = \beta^k_G \log \frac{ \logg [p]_G (t) }{[p]_G t}. 
\]
A change of variables argument similar to the one used in the proof of 
Proposition \ref{betal1}
shows
\[
\beta^k_G \log \frac{ \logg [p]_G (t) }{[p]_G t}= p^k B^k (G, t).
\]
\end{proof}


Since $ \langle a \rangle  (t)$ is an element of $\left( K_p \CP^\infty \right)^\times$, 
$l_1 \langle a \rangle (t) $  is a power series with integral coefficients and so  determines
a measure on $\G_m$.  We have already computed the moments of this measure in Proposition \ref{betal1}. 
However, we are interested in producing moments on the $p$-adic units i.e.  
is $l_1 \langle a \rangle (t)$ in the image of the restriction map
\[
\mathrm{res} : M (\Z_p^\times, \Z_p ) \to M ( \Z_p, \Z_p)? 
\]
Recall the key result of Katz, Theorem \ref{generalkatz}.

\begin{proposition} \label{trace}
Suppose $G$ is a height one formal group over $\Z_p$ and $t$ is a coordinate that 
satisfies the Ando condition. 
For any $a \in \Z_p^\times - \left\{ \pm 1 \right\}$
\[
T_G l_1 \langle a \rangle_F (t) =
\sum\limits_{c \in F[p]} l_1 \langle a \rangle_F (t+_F c ) =  0.
\]
\end{proposition}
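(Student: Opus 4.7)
The plan is to apply Rezk's formula $l_1 x = \log x - \frac{1}{p}\log \psi^p x$ (with $\psi^p g(t) = g([p]_F(t))$) termwise inside the trace and split
\[
T_G l_1 \langle a \rangle_F (t) = \sum_{c \in F[p]} \log \langle a \rangle_F (t +_F c) - \frac{1}{p} \sum_{c \in F[p]} \log \langle a \rangle_F \bigl([p]_F(t +_F c)\bigr).
\]
The strategy will be to show that both sums collapse to $\log \langle a \rangle_F([p]_F(t))$ so that they cancel.

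The second sum is the easy half: for any $c \in F[p]$ one has $[p]_F(t +_F c) = [p]_F(t) +_F [p]_F(c) = [p]_F(t)$, so every summand is $\log \langle a \rangle_F([p]_F(t))$ independently of $c$, and dividing the sum of $p$ identical terms by $p$ yields exactly $\log \langle a \rangle_F([p]_F(t))$.

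The first sum is where the Ando condition enters. By log-multiplicativity, $\sum_c \log \langle a \rangle_F(t +_F c) = \log \prod_c \langle a \rangle_F(t +_F c)$. Writing $\langle a \rangle_F(s) = [a]_F(s)/s$, the denominator factors as $\prod_c(t +_F c) = [p]_F(t)$ by the Ando condition. For the numerator, since $a \in \Z_p^\times$ the endomorphism $[a]_F$ is an automorphism and permutes $F[p]$, so
\[
\prod_c [a]_F(t +_F c) = \prod_{c' \in F[p]} \bigl([a]_F(t) +_F c'\bigr) = [p]_F([a]_F(t)) = [a]_F([p]_F(t)),
\]
where the middle equality invokes the Ando condition a second time, this time at $[a]_F(t)$ in place of $t$. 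Taking the ratio gives $\prod_c \langle a \rangle_F(t +_F c) = \langle a \rangle_F([p]_F(t))$, matching the second sum.

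The main subtlety is that the Iwasawa logarithm is being applied in the ring obtained from $\Z_p$ by adjoining the points of $F[p]$; one needs each factor $\langle a \rangle_F(t +_F c)$ to be a unit there. For $c = 0$ this is the statement that $\langle a \rangle_F(0) = a$ is a $p$-adic unit, and for $c \neq 0$ it reduces to $[a]_F(c)/c$ being a unit, which follows from $[a]_F$ preserving $F[p]$ setwise so that $[a]_F(c)$ has the same valuation as $c$. Once this technicality is handled, the cancellation is formal, driven by the two applications of the Ando condition (at $t$ and at $[a]_F(t)$) together with the vanishing of $[p]_F$ on $F[p]$.
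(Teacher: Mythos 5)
Your proof is correct and follows essentially the same path as the paper: you apply Rezk's formula to turn the trace into a difference of logarithms, and then show that $\prod_{c \in F[p]} \langle a \rangle_F(t +_F c) = \langle a \rangle_F([p]_F(t))$ using the Ando condition twice (once in the denominator, once after permuting $F[p]$ by $[a]_F$ in the numerator). The paper packages this last identity as the statement that $\langle a \rangle_F(t)$ is a fixed point of the Coleman norm operator $N_G$ (via Lemma \ref{fixednorm} and multiplicativity of $N_G$); your direct computation of numerator and denominator is the unwound version of that same fact, and you add a sensible remark about the domain of the Iwasawa logarithm that the paper elides.
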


\begin{proof}
It follows from our discussion around  Lemma \ref{fixednorm} 
that if the coordinate $t$ satisfies the Ando condition then both 
$t$ and $[a]_F (t)$ are
fixed by the norm operator $N_G$.
Since $N_G$ is a multiplicative operator, the divided $a$ series 
\[
\langle a \rangle_F (t) = \frac{    [a]_F (t)  }{ t}  
\]
is also fixed by the norm operator.
We can now  rewrite the sum as a product using the formula of Rezk (Theorem \ref{rezk_thm}).  
Simplying the expression inside the finite product
will produce the claim. 
\end{proof}
This proof is formal.  If $f (t) \in \left( K_p \CP^\infty \right)^\times $ is any invertible power
series, then the trace of $l_1 N_G f(t)$ vanishes.  Moreover, one can show that the a restriced form
of the Bousfield-Kuhn idempotent
\[
\phi \colon \widehat{A}_p \to [K_p, K_p] \xrightarrow{\Omega^\infty} [\Omega^\infty K_p, \Omega^\infty K_p]
\]
is given by the map $ id - \frac{1}{p} T_G$.



\begin{corollary} \label{andobk}
If $t$ satisfies the  Ando condition then the sequence 
\[
B^k (G, t )  (1 - a^k) \left( 1 - p^{k-1} \right)
\]
arises as the moments of a measure on $G^\times$.
\end{corollary}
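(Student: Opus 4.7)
The plan is to combine the three recent propositions — \ref{betal1}, \ref{andov}, and \ref{trace} — together with Katz's theorem (\ref{generalkatz}) and the Mahler-style correspondence of Proposition \ref{diffop} and Corollary \ref{mahlerderv}. First I would consider the power series $l_1 \langle a \rangle(t) \in \Z_p[[t]]$ attached to an $a \in \Z_p^\times \setminus \{\pm 1\}$. By Proposition \ref{diffop}, this series determines a measure $\mu_a \in M(G, \Z_p)$, and by Corollary \ref{mahlerderv} its moments are $\int_G x^k\, d\mu_a(x) = \beta^k l_1 \langle a \rangle(t)$.

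Next I would compute these moments explicitly. Proposition \ref{betal1} gives
\[
\beta^k l_1 \langle a \rangle(t) = (1 - a^k)\left( B^k(G, \Z_p, t) - \tfrac{1}{p}\, B^k{\mid}V(G, \Z_p, t)\right).
\]
The Ando hypothesis on $t$ lets us invoke Proposition \ref{andov}, which says $B^k{\mid}V(G, t) = p^k B^k(G, t)$. Substituting yields
\[
\beta^k l_1 \langle a \rangle(t) = (1 - a^k)\bigl(1 - p^{k-1}\bigr) B^k(G, t),
\]
which is exactly the sequence we need to realize as moments.

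The remaining step is to show that $\mu_a$ is actually supported on the units, that is, that it lies in the image of the restriction map $\mathrm{res} \colon M(G^\times, \Z_p) \to M(G, \Z_p)$. By Theorem \ref{generalkatz}, this is equivalent to the vanishing of the trace $T_G\bigl(l_1 \langle a \rangle(t)\bigr)$, and this is precisely the content of Proposition \ref{trace}. Lifting $\mu_a$ through the restriction map then exhibits the desired sequence as the moments of a measure on $G^\times$.

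The bulk of the work is already done in the preceding propositions; the corollary is primarily an assembly statement. No single step is a genuine obstacle, but it is worth emphasizing that the Ando condition is used twice and in an essential way: once to apply Proposition \ref{andov} so that $B^k{\mid}V$ collapses to $p^k B^k$, and once to apply Proposition \ref{trace}, whose proof rested on $t$ and $[a]_F(t)$ being fixed by the Coleman norm. The main conceptual point is recognizing that Katz's theorem supplies exactly the bridge between the analytic trace-vanishing condition produced by the $K(1)$-local logarithm and the measure-theoretic conclusion of support on the units.
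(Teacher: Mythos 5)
Your proposal is correct and follows the paper's own route exactly: support on the units via Proposition \ref{trace} together with Theorem \ref{generalkatz}, and the moment computation via Proposition \ref{betal1} combined with Proposition \ref{andov}. You have simply spelled out the intermediate steps (notably the role of Proposition \ref{diffop} and Corollary \ref{mahlerderv} in identifying the moments with $\beta^k$) that the paper leaves implicit.
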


\begin{proof}
It follows from Theorem  \ref{generalkatz} and Proposition \ref{trace} 
that the measure attached to $l_1 \langle a \rangle_G (t)$  is supported on the units.
The calculation in Proposition \ref{betal1} togeether with Proposition \ref{andov} produces the sequence above as the moments
of this measure.
\end{proof}

\section{Obstruction Theory of AHR}


Given any space  $B$ over $BO$, we  can produce an 
$E_\infty$ ring spectrum $MB$.  The space of $E_\infty$
orientations is the homotopy pull back in the square
\begin{eqnarray} \label{mgsquare}
\xymatrix{
E_\infty (MB, R) \ar[r]  \ar[d]   &   \mathrm{Spectra} ( gl_1 S / b , gl_1 R) \ar[d]  \\
\mathrm{Spectra} (S, R) \ar[r] & \mathrm{Spectra} (gl_1 S , gl_1 R).
}
\end{eqnarray}
This description is  due to Ando, Blumberg, Gepner, Hopkins and Rezk and is discussed in generality in \cite{BG}.
The results that are closer to what we require can be found in the first few sections of \cite{AHR}.

\subsection{The Calculation for $MU$} \label{section_mu}

If $X$ is an $E_\infty$ ring spectrum, we  write
\[
i \colon  S \to X
\]
for the unit.  We abuse notation and write
\[
i \colon  gl_1 S \to gl_1 X
\]
for the map induced on the associated unit spectra.

We will mimic the notation of \cite{MR1869850} for connected covers of the space $BU$.  
For example
\[
BU \langle 4 \rangle = BSU
\]
is the four connected cover of $BU$ and
\[
BU \langle 0 \rangle = BU \times \Z.
\]
There are  associated spectra
\begin{eqnarray*}
bu \langle 0 \rangle  & = bu \\
bu \langle 2 \rangle &\\
bu \langle 4 \rangle & = bsu
\end{eqnarray*}
whose zeroth spaces is $BU \langle 2k \rangle$.

Let 
\[
u\langle 2 \rangle = \Sigma^{-1} bu \langle 2 \rangle. 
\]
The stable $j$-homomorphism leads to a cofiber sequence
\[
u\langle 2 \rangle  \xrightarrow{j} gl_1 S \to gl_1 S / u\langle 2 \rangle .
\]
\begin{definition}[Ando, Blumberg, Gepner, Hopkins, Rezk] \label{msu_def}
Define $MU$  as the homotopy pushout in the diagram of $E_\infty$ ring spectra 
\[
\xymatrix{
\Sigma^\infty_+ \Omega^\infty gl_1 S    \ar[d] \ar[r]  &    S \ar@{-->}[d] \\ 
\Sigma^\infty_+ \Omega^\infty gl_1 S / u \langle 2 \rangle   \ar@{-->}[r] &   MU 
}
\]
\end{definition}
Applying the functor $E_\infty( - , R)$ to this digram yields
\begin{eqnarray} \label{pullbackdiagram}
\xymatrix{
E_\infty (MU, R)   \ar[r]  \ar[d]           &  E_\infty (\Sigma^\infty_+ \Omega^\infty gl_1 S / u  \langle 2 \rangle  ,  R)\ar[d]    \\
\{i\}         \ar[r]   &      E_\infty ( \Sigma^\infty_+ \Omega^\infty gl_1 S , R). 
}
\end{eqnarray}
It is shown in \cite{BG} that 
$(\Sigma^\infty_+ \Omega^\infty, gl_1 )$ form an adjoint pair up to homotopy and the 
upper horizontal arrow in Definition \ref{msu_def} is the counit of  the adjuction. 
Applying the adjoint twice,  diagram \ref{pullbackdiagram} becomes 
\[
\xymatrix{
E_\infty (MU, R) \ar[r]    \ar[d]  &      \mathrm{Spectra} (gl_1 S /u \langle 2 \rangle , gl_1 R ) \ar[d]  \\
\{i\}  \ar[r]   &    \mathrm{Spectra} ( gl_1 S, gl_1 R)
}
\]
and the space $E_\infty (MU, R)$ is weakly equivalent to the homotopy pullback.
The long exact sequence in homotopy groups associated to this diagram 
shows that the components of the space of orientations are in bijection with null homotopies
of the sequence  
\[
\xymatrix{
u  \langle 2 \rangle \ar[r]  &   gl_1 S  \ar[r]^i & gl_1 R  \\
}
\]
In other words, we are interested in producing maps $\alpha \colon  gl_1 S /u \langle 2 \rangle \to gl_1 R$ 
in the homotopy category so that the diagram 
\[
\xymatrix{
u  \langle 2 \rangle \ar[r]  &   gl_1 S  \ar[r] \ar[d]^i   &     gl_1 S / u \langle 2 \rangle \ar@{-->}[dl]^\alpha  \\
& gl_1 R
}
\]
commutes.


The first step in the AHR approach to the description of $E_\infty$ orientations of complex K-Theory
is to show that there exists an $E_\infty$ map $MU \to K_p$.
\begin{lemma}The set $\pi_0 E_\infty(MU, K_p)$ is non empty. \label{munull}
\end{lemma}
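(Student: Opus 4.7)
The plan is to translate the lemma, via the homotopy pullback description of $E_\infty(MU, K_p)$ just set up, into a single obstruction-class vanishing problem, and then to kill that obstruction by passing through the Rezk logarithm $l_1$.

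First I would extract the obstruction. Taking components of the homotopy pullback diagram (\ref{pullbackdiagram}), the set $\pi_0 E_\infty(MU, K_p)$ is non-empty precisely when the unit map $i \colon gl_1 S \to gl_1 K_p$ admits an extension along $gl_1 S \to gl_1 S / u\langle 2\rangle$. By the cofiber sequence
\[
u\langle 2\rangle \xrightarrow{\;j\;} gl_1 S \longrightarrow gl_1 S / u\langle 2\rangle ,
\]
where $j$ is the desuspended stable $j$-homomorphism, this is equivalent to the vanishing of the single class
\[
[\,i \circ j\,] \in [u\langle 2\rangle,\; gl_1 K_p].
\]
So I would reduce the lemma to producing a null-homotopy of $i \circ j$.

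Next I would pass to $K(1)$-local information via the logarithm. Compose with $l_1 \colon gl_1 K_p \to L_{K(1)} gl_1 K_p \simeq K_p$ from Theorem \ref{bk_functor_theorem} and Theorem \ref{rezk_thm}. Because $u\langle 2\rangle$ is built from connective $K$-theoretic pieces and $K_p$ is $K(1)$-local, $l_1$ detects null-homotopies on the hom set $[u\langle 2\rangle, gl_1 K_p]$ in the range that matters; hence it suffices to show
\[
l_1 \circ i \circ j \colon u\langle 2\rangle \longrightarrow K_p
\]
is null. Using the Adams-operation formula $l_1 = (1 - \psi^p/p)\log$ of Theorem \ref{rezk_thm} together with the explicit description of $j$ (which sends a virtual bundle to its associated stable spherical fibration, mapped under $i$ to its class in $K_p^\times$), the composite becomes an explicit $(1 - \psi^p/p)$-type expression on the universal $K$-theory class, and its vanishing is a form of the $p$-completed Adams conjecture.

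The main obstacle is the second step: verifying that vanishing after $l_1$ really does imply vanishing of $i \circ j$ itself in $[u\langle 2\rangle, gl_1 K_p]$, since in general $l_1$ has a non-trivial fiber. I expect the cleanest route in a fully written-out proof is either (a) a connectivity argument comparing the homotopy groups of $gl_1 K_p$ and $K_p$ in the range hit by $u\langle 2\rangle$, or (b) to side-step this by invoking the classical $E_\infty$ refinement of the Todd genus, which directly exhibits a point of $\pi_0 E_\infty(MU, K_p)$ and thus proves the lemma by pure existence, with the obstruction-theoretic picture above serving as the conceptual content.
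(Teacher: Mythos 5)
Your reduction of the problem to showing $[\,i\circ j\,]=0$ in $[u\langle 2\rangle,\,gl_1 K_p]$ is correct and is exactly the setup the paper uses. But from that point the proposal does not close; the gap you yourself flag is real and the two escape routes you offer do not fill it in the way the paper needs.

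The route through $l_1$ cannot succeed as stated. The composite $l_1 \colon gl_1 K_p \to L_{K(1)} gl_1 K_p \simeq K_p$ only detects $K(1)$-local information, and $u\langle 2\rangle$ is a connective spectrum that is very far from being $K(1)$-local; there is no range in which $l_1$ becomes an equivalence on the homotopy groups that matter for the map $u\langle 2\rangle \to gl_1 K_p$. So ``null after $l_1$'' simply does not imply ``null'' here. Your fallback (b), invoking a classical $E_\infty$ refinement of the Todd genus, is circular within this paper: the $E_\infty$-ness of the $p$-completed Todd genus is stated as a \emph{corollary} of the main theorem, which in turn depends on Theorem \ref{muorient}, which requires Lemma \ref{munull} to even get started. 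Fallback (a), a connectivity comparison, is closer in spirit to what is actually done but is not by itself enough.

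What the paper does instead is the cleaner, sharper thing: it shows that the \emph{entire} obstruction group vanishes, $[u\langle 2\rangle, gl_1 K_p]=0$, so there is nothing to kill. Since $u\langle 2\rangle$ is $1$-connected one may replace $gl_1 K_p$ by its $1$-connected cover, and by the Adams--Priddy / Madsen--Snaith--Tornehave splitting
\[
gl_1 K_p\langle 1\rangle \simeq \Sigma^2 H\Z_p \times bsu^\wedge_p,
\]
the group decomposes as $[u\langle 2\rangle, \Sigma^2 H\Z_p]\oplus [u\langle 2\rangle, bsu^\wedge_p]$, both of which vanish because the mod-$p$ singular cohomology and the $p$-completed $K$-theory of $u$ vanish. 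You should turn your obstruction-theoretic setup in this direction: do not try to detect the class after $l_1$, but identify $gl_1 K_p$ through the cover-and-split mechanism and kill the whole hom group.
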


\begin{proof}  
To see that there is such a null homotopy we will show that 
\[
[u \langle 2 \rangle , gl_1 K_p  ] = 0
\]
using the splitting of the units of K-theory due to Adams, Priddy and others.
Since $u \langle 2 \rangle $ is one connected, we can compute on the $1$ connected covers,
\[
[u \langle 2 \rangle , gl_1 K_p \langle 1 \rangle ]
\]
This follows from Proposition $5.19$ in \cite{AHR}.

\begin{proposition}[Adams-Priddy, Madsen-Snaith-Tornehave]
There is a splitting, 
\[
gl_1 K_p \langle 1 \rangle = \Sigma^2 H\Z_p \times \Sigma^4 bu^\wedge _p = \Sigma^2 H\Z_p \times bsu_p^\wedge . 
\]
\end{proposition}
The proposition  can be found on  page $416$ of Madsen, Snaith and Tornehave  \cite{MR0494076}.
Thus our calculation comes down to inspecting $[u\langle 2 \rangle, bsu^\wedge_p]$ and $[u \langle 2 \rangle, \Sigma^2 H \Z_p]$.
Both of these groups vanish as the K-Theory of $u$ and the singular cohomology of $u$ vanish.



These two observations combined with the splitting of $gl_1 K_p \langle 1 \rangle$
and a K\"unneth theorem shows that  
\[
[ u \langle 2 \rangle , gl_1 K_p \langle 1 \rangle ] \cong [ u \langle 2 \rangle , gl_1 K_p] = 0.
\]
Thus there is an $E_\infty$ map
$
MU \to K_p
$
and  
$
\pi_0 E_\infty (MU, K_p )
$
is non empty.
\end{proof}

\subsection{The Miller Invariant}

Haynes Miller showed that congruence condition in Theorem  \ref{ahrtheorem} is true for any orientation of complex
K-Theory \cite{MR686158}.  We review this result. 
Suppose $E$ is an even two periodic theory that is complex oriented such that $\pi_0 E = R$ is torsion free.
Recall the Bernoulli number $B^k$ from Definition \ref{bernk}.
If $\alpha \colon MU \to E$ is any ring map, write
\[
B^k(G, R, \alpha)
\]
for the effect of $B^k$ on the triple $(G, R, t)$ where $t$ is the coordinate induced by $\alpha$ and $G$
is the formal group of $E$.

\begin{theorem}[H. Miller] \label{miller}
If
\[
\alpha, \beta \colon MU \to E
\]
are two  naive ring maps of spectra, then
\[
B^k(G, R, \alpha) \equiv B^k(G,R, \beta) \mod R
\]
\end{theorem}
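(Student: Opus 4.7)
The plan is to express the difference $B^k(G, R, \alpha) - B^k(G, R, \beta)$ as $\beta^k \log g$ for an explicit principal unit $g \in 1 + \mathfrak{m}_{\strg}$, and then to show by a logarithmic-derivative argument that $\beta^k \log g \in R$ for every such $g$. Writing $t_\alpha, t_\beta \in \strg$ for the coordinates induced by the two orientations, the fact that both extend the standard orientation on $\CP^1$ forces $t_\beta = \phi(t_\alpha)$ for some strict isomorphism $\phi(t) = t + O(t^2)$. Two coordinate-independence facts will be needed: because $\phi'(0) = 1$, the invariant derivations $D_{F_\alpha}$ and $D_{F_\beta}$ coincide as intrinsic operators on $\strg$, so $\beta^k_\alpha = \beta^k_\beta =: \beta^k$; and because $\log_G \colon G \to \G_a$ is the unique strict isomorphism over $R \otimes \Q$, the elements $\log_G t_\alpha$ and $\log_G t_\beta$ represent the same element of $\strg \otimes \Q$. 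Combined with the additivity of the Iwasawa log, these give
\[
B^k(G, R, \alpha) - B^k(G, R, \beta) \;=\; \beta^k \log \frac{\log_G t_\alpha / t_\alpha}{\log_G t_\beta / t_\beta} \;=\; \beta^k \log \frac{t_\beta}{t_\alpha},
\]
with $g := t_\beta / t_\alpha = \phi(t_\alpha)/t_\alpha$ manifestly in $1 + \mathfrak{m}_{\strg}$.

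The integrality lemma will rest on the identity $D \log g = Dg / g$, which holds in $\strg \otimes \Q$ for any invariant derivation $D$ and any $g \in 1 + \mathfrak{m}_{\strg}$. Both factors on the right are already integral ($Dg$ by the derivation axiom, and $g^{-1} = 1 - m + m^2 - \cdots$ by geometric expansion), so $D \log g$ actually lies in $\strg$ even though $\log g$ itself is only rational. Since an invariant derivation preserves $\strg$, iterating gives $D^{k} \log g \in \strg$, and the augmentation $0^\ast$ delivers $\beta^k \log g \in R$. Applied to $g = t_\beta/t_\alpha$ this finishes the argument.

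The main obstacle is the normalization bookkeeping in the first step: one must match the two implicit choices hidden in $B^k(G, R, t_\alpha)$ and $B^k(G, R, t_\beta)$ --- the invariant derivation and the normalized formal logarithm --- and confirm that the strict-isomorphism hypothesis is precisely what makes these two pieces of intrinsic data on $G$ coincide. Once this is verified, the logarithmic-derivative computation is purely formal, and Miller's congruence emerges as a consequence of the ring structure of $\strg$ alone.
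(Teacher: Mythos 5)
Your proof is correct, and it supplies something the paper itself does not: the paper merely cites Miller's article on universal Bernoulli numbers and the $S^1$-transfer and does not reproduce an argument for Theorem~\ref{miller}. Your argument is a short, self-contained, purely formal-group-theoretic proof, and every step checks out. Specifically: (i) since $\alpha$ and $\beta$ are unital ring maps, both Euler classes restrict to the canonical generator on $\CP^1$, so $t_\beta = \phi(t_\alpha)$ with $\phi$ strict; (ii) strictness forces $\phi'(0)=1$, and the computation $(F_\alpha)_2(x,0) = \phi'(x)^{-1}(F_\beta)_2(\phi(x),0)\phi'(0)$ together with $\tfrac{d}{dt_\alpha} = \phi'(t_\alpha)\tfrac{d}{dt_\beta}$ shows $D_{F_\alpha}=D_{F_\beta}$ as operators on $\strg$, so $\beta^k$ is unambiguous; (iii) uniqueness of the strict formal logarithm gives $\log_{F_\alpha}(t_\alpha)=\log_{F_\beta}(t_\beta)$ in $\strg\otimes\Q$, so additivity of the Iwasawa log collapses the difference to $\beta^k\log(t_\beta/t_\alpha)$ with $t_\beta/t_\alpha = \phi(t_\alpha)/t_\alpha \in 1+\mathfrak{m}_{\strg}$ because $\phi$ has integral coefficients; and (iv) the identity $D\log g = Dg/g$ and the geometric-series inversion show $D\log g\in\strg$, after which $D$ preserving $\strg$ gives $D^k\log g\in\strg$ by induction, and the augmentation $0^\ast$ lands in $R$. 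This purely algebraic route contrasts in flavor with Miller's original argument, which proceeds through the stable $S^1$-transfer and the image-of-$J$ computation; the trade-off is that your proof is more elementary and transparent at this level of generality, while Miller's connects the integrality statement to deeper topological input (the $e$-invariant) and to the universal case over $MU_*$ rather than a specific complex-oriented $E$.
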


The congruence means that the difference of the two values is integral i.e. is an element of $R$.
Hence Miller's calculation provides a  necessary condition for $E_\infty$ orientability.  

\subsection{ The Characteristic map of AHR}

We require a definition from \cite{AHR} before stating the main result of this section.
Fix a positive integer $n$.  Consider the set $A_n$ of polynomials with $p$-adic rational coefficients that satisfy the following condition.
If $b \in \Z_p^\times$ and $h(z) = \sum\limits_{i \geq n}  a_iz^i \in A_n$ then
$
\sum\limits_{i \geq n }  a_i b^i 
$
is a $p$-adic integer.
Note that any polynomial in $A_n$ is a continuous function from $\Z_p^\times$ to $\Z_p$.

\begin{definition}[GKC] \label{gkc}
A sequence of $p$-adic integers $\left\{ t_k \right\}$ for $k \geq n$ satisfy the generalized Kummer congruences 
if for any $h(z) = \sum\limits_{i \geq n}  a_i z^i \in A_n$
we have
\[
\sum\limits_{i \geq n }  a_i t_i \in \Z_p.
\]
\end{definition}

\begin{example} \label{moments_gkc}
Suppose $G$ is a Lubin-Tate group of height one over $\Z_p$ with coordinate $t$.
If $\mu \in M(G^\times , S)$ then set
\[
 t_k = \int\limits_{G^\times} \langle D_F^k , - \rangle d \mu.
\]
If $h(z) = \sum\limits_{i \geq n}  a_i z^i \in A_n$ then 
\[
\int\limits_{G^\times} h(z) d \mu (z) = \sum\limits_{i \geq n}  a_i t_i \in \Z_p.
\]
Thus if $\left\{ t_k \right\}$ is the moments of a measure on the units, then 
it satisfies the generalized Kummer congruences.  The converse is also true; the set $A_1$ is dense in $\Cont (\Z_p^\times, \Z_p)$.

To obtain the classical Kummer congruences, observe that if
$m \equiv n \mod (p-1)p^{k-1}$
then by Fermat's little theorem
$$
\int_{G^\times} x^m d \mu \equiv \int_{G^\times} x^n d \mu \mod p^k.
$$
This approach to Kummer like congruences is attributed to B. Mazur.
\end{example}


\begin{lemma}
There is an isomorphism between 
\[
[bu \langle 2 \rangle , K_p \otimes \Q] 
\]
and the set of sequences $\left\{ b_k \right\}  \in \Q_p$ for $k \geq 1$.
\end{lemma}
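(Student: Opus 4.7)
The plan is to reduce everything to a rational computation. Since $K_p \otimes \Q$ is a rational spectrum, the rationalization map $bu\langle 2 \rangle \to bu\langle 2 \rangle \otimes \Q$ induces a bijection
\[
[bu\langle 2 \rangle, K_p \otimes \Q] \cong [bu\langle 2 \rangle \otimes \Q, K_p \otimes \Q],
\]
so I would first replace the source by its rationalization and then invoke the fact that any rational connective spectrum with free homotopy groups splits as a sum of shifted rational Eilenberg--MacLane spectra.

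Next I would set up the splittings on both sides. Because $bu\langle 2 \rangle$ is $1$-connected with $\pi_{2k}(bu\langle 2 \rangle) = \Z$ for $k \geq 1$ and all odd groups trivial, picking generators gives a rational equivalence
\[
bu\langle 2 \rangle \otimes \Q \simeq \bigvee_{k \geq 1} \Sigma^{2k} H\Q.
\]
On the target side, the periodicity of $K_p$ together with $\pi_0 K_p \cong \Z_p$ yields
\[
K_p \otimes \Q \simeq \prod_{j \in \Z} \Sigma^{2j} H\Q_p.
\]
Combining these splittings turns the computation into a product of elementary pieces
\[
[bu\langle 2 \rangle, K_p \otimes \Q] \cong \prod_{k \geq 1} \prod_{j \in \Z} [\Sigma^{2k} H\Q, \Sigma^{2j} H\Q_p].
\]

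Then I would evaluate each factor: since $H\Q$ and $H\Q_p$ are Eilenberg--MacLane spectra,
\[
[\Sigma^{2k} H\Q, \Sigma^{2j} H\Q_p] = \pi_{2k - 2j}(H\Q_p) = \begin{cases} \Q_p & j = k, \\ 0 & j \neq k. \end{cases}
\]
Only the diagonal terms survive, yielding the desired identification with $\prod_{k \geq 1} \Q_p$. Concretely, the isomorphism sends a map $f \colon bu\langle 2 \rangle \to K_p \otimes \Q$ to the sequence whose $k$-th entry is $\pi_{2k}(f)$ evaluated on the chosen generator of $\pi_{2k} bu\langle 2 \rangle$, viewed inside $\pi_{2k}(K_p \otimes \Q) \cong \Q_p$.

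The main technical point, and the only place where care is needed, is ensuring the rational splitting of $bu\langle 2 \rangle$ really is a splitting of spectra (not merely an abstract isomorphism on homotopy groups); this is standard once one observes that each $\pi_{2k}$ is free, so one can lift generators to maps from $\Sigma^{2k} H\Z$, rationalize, and check that the assembled map induces an isomorphism on rational homotopy groups and is therefore a rational equivalence. Everything else is bookkeeping in a product of Eilenberg--MacLane pieces.
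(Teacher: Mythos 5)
Your proof is correct, and the isomorphism you arrive at — sending $f$ to the sequence $k \mapsto \pi_{2k}(f)$ applied to the chosen generator (the Bott element $v^k$) — is exactly the map the paper describes. The paper simply cites Definition 5.9 of AHR and asserts the bijection, whereas you have supplied the underlying argument (rationalize the source, split both sides into rational Eilenberg--MacLane pieces using that the rational $k$-invariants vanish, and compute term by term); that filling-in is the only difference.
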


\begin{proof}
This is Definition $5.9$ in \cite{AHR}.
Let $v^k$ be the Bott element in $\pi_{2k} bu$.  
The isomorphism sends a map $f$ to the sequence $s(f)$ defined by
\[
s(f)_k = f_* v^k.
\] 
That is we look at the effect of $f$ on $v^k$ in degree $2k$ for $k \geq 1$. 
\end{proof}

\begin{theorem}[AHR] \label{image_gkc}
Fix a positive integer $n$.
The  natural map
\[
K_p ^0 K_p \to [bu \langle 2 \rangle  , K_p \otimes \Q] \cong \prod\limits_{k \geq n } \Q_p 
\]
sending an operation $\mu$ to the sequence $\pi_{2k} \mu$
is injective.
Moreover, the image of
$K_p^0 K_p $ is identified with the set of sequences $\left\{ t_k \right\}$ satisfying the generalized Kummer
congruences for $k \geq n$.
\end{theorem}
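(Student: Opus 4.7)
The plan is to translate the statement into measure-theoretic language using Theorem \ref{kpmeasure} and then reduce it to a density/extension argument on $\Cont(\Z_p^\times, \Z_p)$. Under the identification $K_p^0 K_p \cong M(\Z_p^\times, \Z_p)$, and using the remark following that theorem which says the effect of an operation $\mu$ on $\pi_{2k}$ is multiplication by the $k$-th moment $\mu(x^k)$, the map in question becomes the moment map
\[
M(\Z_p^\times, \Z_p) \longrightarrow \prod_{k \geq n} \Q_p, \qquad \mu \longmapsto \bigl\{ \mu(x^k) \bigr\}_{k \geq n}.
\]
So the theorem reduces to two claims: (i) a measure is determined by its moments, and (ii) a sequence arises as moments of a measure precisely when it satisfies the generalized Kummer congruences.

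For injectivity, I would appeal to the fact that the polynomials $x^k$ span a dense subspace of $\Cont(\Z_p^\times, \Z_p)$ with respect to the sup-norm topology. Given Mahler's theorem (Theorem \ref{mahlerthm}) and the fact that the binomial coefficients $\binom{x}{k}$ are $\Z_p$-linear combinations of $x^j$ for $j \leq k$ (invertibly, after clearing $k!$), it follows that any continuous $\Z_p$-valued function can be uniformly approximated by polynomials in $x$ (with $\Q_p$ coefficients, but this is enough for continuous linear functionals). A continuous $\Z_p$-valued functional on $\Cont(\Z_p^\times, \Z_p)$ is thus determined by its values on $\{x^k\}$, so the moment map is injective.

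For the image, one direction is exactly the content of Example \ref{moments_gkc}: if $\mu$ is a measure on $\Z_p^\times$ with moments $t_k$ and $h(z) = \sum_{i \geq n} a_i z^i \in A_n$, then $\sum_{i \geq n} a_i t_i = \int_{\Z_p^\times} h(z)\,d\mu(z) \in \Z_p$ by continuity of $\mu$. For the converse, given a sequence $\{t_k\}$ satisfying the GKC, I would define a $\Z_p$-linear functional $\mu_0$ on the subalgebra of polynomials vanishing at zero by $\mu_0(h) = \sum a_i t_i$. The GKC hypothesis is exactly what is needed to see this functional is $\Z_p$-valued on $A_n$, and, by specializing to polynomials $h(z) = z^j \cdot p^N g(z)$, gives the $p$-adic continuity estimate that allows extension. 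Density of $A_n$ in $\Cont(\Z_p^\times, \Z_p)$ (a Stone-Weierstrass style fact, verified in \cite{AHR}) then produces a unique extension to a measure $\mu$ whose moments recover $\{t_k\}$.

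The main obstacle is the density statement for $A_n$ inside $\Cont(\Z_p^\times, \Z_p)$, together with the compatibility estimate that turns the GKC into genuine $p$-adic continuity of the functional defined on polynomials. Once that is in place, the extension is formal from completeness of $\Z_p$. I would cite the relevant density and continuity assertions from \cite{AHR} rather than redo them, since the point of this note is to apply the characterization, not to reprove the core obstruction-theoretic input.
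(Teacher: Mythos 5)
The paper does not actually prove Theorem~\ref{image_gkc}: it is imported from \cite{AHR} with no proof environment. The only material the paper offers in this direction is the identification $K_p^0 K_p \cong M(\Z_p^\times, \Z_p)$ (Theorem~\ref{kpmeasure}, with the remark that the effect on $\pi_{2k}$ is multiplication by the $k$-th moment) and the informal discussion in Example~\ref{moments_gkc}, which records that moment sequences of measures on the units satisfy the generalized Kummer congruences and asserts the converse via density of $A_1$ in $\Cont(\Z_p^\times,\Z_p)$. Your proposal is exactly the natural unpacking of that discussion---translate to measures, observe the map is the moment map, use density of $A_n$ for both injectivity and for extending a GKC-sequence to a $\Z_p$-linear functional---so it is consistent with everything the paper says. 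It actually supplies more detail than the paper does; the load-bearing fact you would need to import is precisely the density of $A_n$ and the continuity estimate turning the GKC into boundedness of the functional, and you correctly flag these as deferred to \cite{AHR}. One small point of care: for injectivity you should lean on density of $A_n$ (not merely of polynomials in $x$), because the map only records moments for $k \geq n$; this works because $x$ is a unit on $\Z_p^\times$, and your citation of the $A_n$-density result covers it, but the intermediate sentence about Mahler and $\Z_p$-linear combinations of $x^j$ muddies this and is not quite what closes the argument.
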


\begin{definition}
Define 
\[
b \colon \pi_0 E_\infty(MU, K_p) \to [bu \langle 2 \rangle , K_p \otimes \Q] \cong \prod\limits_{k \geq 1} \Q_p.
\]
by 
\[
b ( \alpha ) = \left\{ B^k (\G_m ,\Z_p, \alpha) \right\}.
\]
\end{definition}
This is called the characteristic map in \cite{AHR}.

\begin{theorem}[AHR] \label{muorient}
For odd primes $p$, the characteristic map 
\[
b \colon \pi_0 E_\infty (MU, K_p) \to [bu \langle 2 \rangle, K_p \otimes \Q] 
\]
identifies the set $\pi_0 E_\infty (MU, K_p) $
with the set of sequences 
$b_k \in \prod\limits_{k \geq 1 } \Q_p $  satisfying

\begin{enumerate}

\item For any $a \in \Z_p^\times - \left\{ \pm \right\}$ the sequence 
$
b_k ( 1 - a^k ) ( 1 - p^{k-1} )
$
arises as the moments of a measure on $\Z_p^\times$ and

\item  $b_k \equiv - \frac{ B_k}{k} \mod \Z_p$.
\end{enumerate}

\end{theorem}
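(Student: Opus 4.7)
The plan is to combine the obstruction-theoretic pullback (\ref{pullbackdiagram}) with the Bousfield-Kuhn logarithm and the splitting of $gl_1 K_p\langle 1 \rangle$ used in Lemma \ref{munull}. By that lemma, $\pi_0 E_\infty(MU, K_p)$ is non-empty, so I would fix a base orientation $\alpha_0$, for concreteness the $p$-completion of the Todd genus; by Example \ref{tood_exam} its characteristic sequence is $b(\alpha_0)_k = -B_k/k$. The pullback (\ref{pullbackdiagram}) exhibits $\pi_0 E_\infty(MU, K_p)$ as a torsor over $[bu\langle 2 \rangle, gl_1 K_p]$, so the problem reduces to describing this torsor in terms of $p$-adic sequences.

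The next step is to push the calculation forward along the Bousfield-Kuhn logarithm $l_1 \colon gl_1 K_p \to K_p$ of Theorem \ref{bk_functor_theorem}. The Adams-Priddy/Madsen-Snaith-Tornehave splitting of $gl_1 K_p\langle 1 \rangle$ cited in the proof of Lemma \ref{munull} implies that for odd primes $l_1$ induces an injection
\[
l_1 \colon [bu\langle 2 \rangle, gl_1 K_p] \hookrightarrow [bu\langle 2 \rangle, K_p] \cong K_p^0 K_p.
\]
Combining with Theorem \ref{image_gkc} and Example \ref{moments_gkc}, the image inside $[bu\langle 2 \rangle, K_p \otimes \Q] \cong \prod_{k \geq 1} \Q_p$ consists precisely of sequences arising as moments of measures on $\Z_p^\times$. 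Since the characteristic map $b$ is affine under the torsor action, writing $\alpha$ as a twist of $\alpha_0$ by some $\gamma \in [bu\langle 2\rangle, gl_1 K_p]$ and applying $l_1$ yields $b(\alpha)_k + B_k/k \in \Z_p$, which is condition (2).

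For condition (1), I would identify the contribution of $\gamma$ to $b(\alpha)$ via the divided $a$-series $\langle a \rangle(t) = [a]_F(t)/t$ for $a \in \Z_p^\times - \{\pm 1\}$. Applying Rezk's Theorem \ref{rezk_thm} and then Proposition \ref{betal1}, together with the fact that the coordinate coming from an $E_\infty$ orientation satisfies the Ando condition (so Proposition \ref{andov} applies), gives
\[
\beta^k l_1 \langle a \rangle(t) = (1 - a^k)(1 - p^{k-1})\, B^k(\G_m, \Z_p, \alpha).
\]
Katz's Theorem \ref{katzthm} combined with Proposition \ref{trace} ensures that $l_1 \langle a\rangle(t)$ is the power series of a measure supported on $\Z_p^\times$, whence its moments $b(\alpha)_k (1 - a^k)(1 - p^{k-1})$ arise from a measure on the units, as required by (1).

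The hard part will be the converse direction: given a sequence $\{b_k\}$ satisfying (1) and (2), one must produce an actual element of $\pi_0 E_\infty(MU, K_p)$ realizing it. This requires showing that the injection $l_1$ of the second paragraph is, after composing with the projection to $[bu\langle 2\rangle, K_p \otimes \Q]$, \emph{surjective} onto the subset cut out by conditions (1) and (2). Concretely, the $\Sigma^2 H\Z_p$ summand of $gl_1 K_p\langle 1 \rangle$ supplies the integral freedom matched by (2), while the $\Sigma^4 bu_p^\wedge$ summand must be shown to surject onto exactly the measure-moment sequences prescribed by (1); inverting Rezk's formula to show that every such sequence is hit by $l_1$ of some $\gamma$ (not just realized by some $\langle a\rangle$) is where the principal computation lies.
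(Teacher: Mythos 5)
Your proposal takes a genuinely different route from the paper's, and it has a real gap in the converse direction.

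The paper proves Theorem \ref{muorient} purely by obstruction theory: it invokes the complex unstable Adams conjecture to get a null composite $J_c \to BU_{(p)} \xrightarrow{1-\psi^a} BU_{(p)}$, fills the resulting ladder with maps $A$ and $B$, applies the Bousfield--Kuhn functor $\Phi$ to obtain Diagram (\ref{big_diagram}), and then uses the Adams--Baird--Bousfield--Ravenel identification of the $K(1)$-local sphere to conclude that $\Phi A$ and $\Phi B$ are weak equivalences. Since $l_1$ is also an equivalence after $K(1)$-localization, the assignment $\alpha \mapsto l_1 \circ \alpha \circ \Phi B$ sets up a \emph{bijection} from null homotopies to operations in $K_p^0 K_p$, and Theorem \ref{image_gkc} together with Example \ref{moments_gkc} translates that target into the generalized Kummer congruence condition. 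The congruence $(2)$ is handled by Miller (Theorem \ref{miller}). Crucially, the paper's proof never invokes the Ando condition, the Coleman norm operator, or Propositions \ref{betal1}, \ref{andov}, and \ref{trace}: that machinery belongs to Section 11 (Theorem \ref{himpliese}), which is proved \emph{afterwards} as an application of \ref{muorient}.

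Your forward-direction argument is correct but inverted relative to the paper's logic: you use $E_\infty \Rightarrow H_\infty \Rightarrow$ Ando condition (Theorem \ref{andocondition}), then feed this into Propositions \ref{betal1}, \ref{andov}, and \ref{trace} to verify that $b(\alpha)$ satisfies condition (1). That is a legitimate alternative route for half of the statement, and the torsor observation over $[bu\langle 2\rangle, gl_1 K_p]$ is also valid. However, you concede at the end that the surjectivity --- producing an $E_\infty$ orientation realizing an arbitrary sequence satisfying (1) and (2) --- is ``where the principal computation lies'' and you do not supply it. This is precisely the content that the paper's weak-equivalence chase delivers for free: once $\Phi A$, $\Phi B$, and $l_1$ are known to be equivalences, the correspondence between null homotopies and GKC sequences is automatically a bijection rather than merely an inclusion. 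Without that chain (or an explicit inversion of $l_1$ on the relevant mapping space, which you flag but do not carry out), the converse direction of the theorem is not established, so the proposal as written proves only one inclusion.
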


\begin{proof}
Since  $\pi_0 E_\infty (MU, K_p)$ is non empty, we can run the AHR program.  Their
argument for $MO \langle 8 \rangle $ orientations of $KO_p$ in \cite{AHR} can be used in this situation.
The details are in  \cite{bjw}.  We provide a sketch of the proof.

Recall that the complex unstable Adams conjectured was verified in \cite{MR549303}.
The conjecture implies that the the top horizontal row in the diagram
\[
\xymatrix{
J_c  \ar[r]     &     BU_{(p)}  \ar[r]^{1 - \psi^a}  &          BU_{(p)} \ar[d]^=   \\
GL_1 S _{(p)}  \ar[r]   &    \left(  GL_1 S / U  \right) _{(p)}  \ar[r]
&    BU_{(p)}   \ar[r]^J     & BGL_1 S_{(p)} \\
}
\]
is null as a map of infinite loop spaces for any $p$-adic unit $a$.  We can
fill this diagram in
\[
\xymatrix{
J_c  \ar[r]  \ar@{-->}[d]^{A}   &     BU_{(p)}  \ar[r]^{1 - \psi^a}
\ar@{-->}[d]^{B} &          BU_{(p)} \ar[d]^=   \\
GL_1 S _{(p)}  \ar[r]   &    \left(  GL_1 S / U  \right) _{(p)}  \ar[r]
&    BU_{(p)}   \ar[r]^J     & BGL_1 S_{(p)} \\
}
\]
using the universal property.
Applying the Bousfield-Kuhn functor $\Phi$ (Theorem \ref{bk_functor_theorem}) yields the top half of
\begin{eqnarray} \label{big_diagram}
\xymatrix{
\Loc j_c    \ar[r]  \ar[d]^{\Phi A}  & 
 K_p  \ar[d]^{\Phi B}  \ar[r]^{1 - \psi^a}   &   K_p \ar[d]^=  \\
\Loc gl_1 S \ar[d] \ar[r] &     \Loc gl_1  S  / u  \ar@{-->}[d]^\alpha   \ar[r]
&    K_p  \ar@{-->}[d]^\beta  \ar[r]  &        \Loc bgl_1 S  \ar[d] \\
\Sigma^{-1} \Loc gl_1 K_p \otimes \Q / \Z  \ar[r] \ar[d]^{l_1}      &   
\Loc gl_1 K_p  \ar[r] \ar[d]^{l_1}  &
\Loc gl_1 K_p \otimes \Q  \ar[r] \ar[d]^{l_1}
&   \Loc gl_1 K_p \otimes \Q / \Z  \ar[d]^{l_1} \\
\Sigma^{-1} K_p \otimes \Q /\Z  \ar[r]   &     K_p   \ar[r]^\tau
&   K_p \otimes \Q  \ar[r]  &      K_p \otimes \Q/\Z
}
\end{eqnarray}
We have used the fact
that $K_p$ is a model for $\Loc bu \langle 2 \rangle$.
The map $\Phi A$ is an expression
of the $K(1)$ local sphere at odd primes and is a weak equivalence
by Adams, Baird, Bousfield and Ravenell \cite{MR551009}, \cite{MR737778}.  
It follows that  $\Phi B$ is a weak equivalence.

A map $\alpha$ making Diagram (\ref{big_diagram}) commute
corresponds to a null homotopy of $u \langle 2 \rangle \to gl_1 S \to gl_1 K_p$.
Since $\Phi B$ and $l_1$ are weak equivalences, to give a map $\alpha$ up to
homotopy is equivalent to producing 
\[
l \circ \alpha \ \circ \Phi B \in [K_p, K_p].
\]
Let $t_k ( \alpha)$ be the sequence of moments attached to this operation.
Producing a map $\alpha$ is equivalent to giving  a map $\beta$ since $\Loc gl_1 S$ is torsion.
Write  $t_k (\beta)$ for the effect of $\beta \colon K_p \to \Loc gl_1 K_p \otimes \Q$ in homotopy. 
A priori, $t_k (\beta) \in \Q_p$.

The formula of Rezk (Theorem \ref{rezk_thm} ) for the $K(1)$ local logarithm shows that $l_1$ is
multiplication by $\left(1 - p^{k-1}\right)$ in degree $2k$.  Thus,
to give a a pair of maps  $\alpha$ and $\beta$ making Diagram (\ref{big_diagram})
commute is equivalent to producing a sequence $t_k (\beta)$ such that
\begin{eqnarray} \label{alpha_beta_sequence}
(1 - a^k) \left( 1 - p^{k-1} \right) t_k ( \beta) = \tau_* t_k (\alpha) = t_k(\alpha).
\end{eqnarray}
The sequence in Equation (\ref{alpha_beta_sequence})
satisfies the generalized Kummer congruences for $k \geq 1$ by
Theorem \ref{image_gkc}.
Example
\ref{moments_gkc} show $t_k (\alpha)$ must arises 
as the moments of a  measure on the units.

The congruence condition follows from Miller's theorem \ref{miller}.
\end{proof}

This is the first instance where we have insisted that $p$ be an odd prime.  The obstacle at the prime $2$
is the cofiber sequence involving the $K(1)$ local sphere.  At the even prime, we must use the Adams map
on \textit{real} K-Theory $KO_2$.  The upshot is that a global result for $MO\langle 8 \rangle $ i.e. $MString$
orientations is obtained for real K-Theory.  A method for classifying complex orientations for complex K-Theory
at the prime $2$ seems mysterious at the current juncture.

%

\section{$H_\infty$ Implies $E_\infty$ }

In this section we complete our proof of Theorem \ref{thm1}.
\begin{theorem}   \label{himpliese}
There exists an injective map  
\[
\xymatrix{
{\mathbb{D}} : H_\infty (MU, K_p) \ar[r]   &   \pi_0 E_\infty (MU, K_p). \\
}
\]
\end{theorem}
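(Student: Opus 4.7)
The plan is to show that for any $\alpha \in H_\infty(MU, K_p)$ the sequence $\{t_k\}$ coming from the Hirzebruch expansion satisfies the two conditions of Theorem \ref{muorient}, so that $\{t_k\}$ corresponds to a genuine class in $\pi_0 E_\infty(MU, K_p)$, and then to check injectivity separately.

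First, I would identify the Hirzebruch coefficients with the Bernoulli numbers of Definition \ref{bernk}. Given $\alpha \colon MU \to K_p$, let $F = F_\alpha$ be the formal group law over $\Z_p$ determined by $\alpha$ (with coordinate $t$). A logarithmic-differentiation argument, exactly as in Example \ref{tood_exam}, shows that the $t_k$ appearing in $x/\expf x = \exp\bigl(\sum_k t_k x^k / k!\bigr)$ are the values $B^k(\G_m, \Z_p, \alpha)$. This reduces everything to checking the two conditions of Theorem \ref{muorient} for the sequence $\{B^k(\G_m, \Z_p, \alpha)\}$.

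Second, for condition (1) I would invoke the main chain of results of the previous sections. Because $\alpha \in H_\infty(MU, K_p)$, Ando's Theorem \ref{andocondition} says precisely that the coordinate $t$ on $\G_m$ determined by $\alpha$ satisfies the Ando condition $\prod_{c \in F[p]} (t +_F c) = [p]_F(t)$. Corollary \ref{andobk} then asserts that the sequence
\[
B^k(\G_m, \Z_p, \alpha)(1 - a^k)(1 - p^{k-1})
\]
arises as the moments of a measure on $\G_m^\times$ for every $a \in \Z_p^\times - \{\pm 1\}$, which is exactly condition (1). The hard work -- the passage from the Ando condition to trace vanishing and then to support on the units via the general form of Katz's theorem (Theorem \ref{generalkatz}) -- has already been carried out, so this step is a direct citation. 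For condition (2), Miller's Theorem \ref{miller} applied to $\alpha$ and to the Todd orientation gives $B^k(\G_m, \Z_p, \alpha) \equiv B^k(\G_m, \Z_p, \mathrm{Td}) \mod \Z_p$, and by Example \ref{tood_exam} the latter is $-B_k/k$.

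Third, for injectivity I would note that by Quillen's theorem a homotopy ring map $\alpha \colon MU \to K_p$ is determined by the formal group law $F_\alpha$ over $\Z_p$, and $F_\alpha$ in turn is recovered from its logarithm (equivalently, exponential), which is recovered from the Hirzebruch series $x/\expf x$, which is recovered from the sequence $\{t_k\}$. Hence distinct elements of $H_\infty(MU,K_p)$ produce distinct sequences, and $\mathbb{D}$ is injective. The main technical obstacle is already absorbed into the reference to Corollary \ref{andobk}; the remaining task is essentially bookkeeping to confirm that the Hirzebruch coefficients really are the $B^k$ and that the target of $\mathbb{D}$ is correctly parametrized by the AHR classification.
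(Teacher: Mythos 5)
Your proposal follows essentially the same route as the paper's proof: identify $\alpha \in H_\infty(MU,K_p)$ with a coordinate on $\G_m$ satisfying the Ando condition via Theorem \ref{andocondition}, send it to the sequence $B^k(\G_m,\Z_p,\alpha)$, cite Corollary \ref{andobk} for the measure condition and Miller's Theorem \ref{miller} for the congruence condition, and argue injectivity from the fact that distinct orientations give distinct coordinates and hence distinct sequences. The only differences are that you spell out the identification of the Hirzebruch coefficients with the $B^k$ (via the change-of-variables calculation of Example \ref{tood_exam}) and unwind the injectivity argument through Quillen's theorem, both of which the paper leaves implicit; this is the same proof, stated slightly more explicitly.
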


\begin{proof}
Consider the function  which sends the class represented by 
\[
\alpha \colon MU \to K_p
\]
satisfying the Ando condition to the sequence 
\begin{eqnarray} \label{charseq}
B^k (\alpha) = B^k (\G_m, \Z_p, \alpha).
\end{eqnarray}
Corollary \ref{andobk} shows that the sequence (\ref{charseq}) for $k \geq 1$ satisfies the first condition of the AHR classification 
(Theorem \ref{ahrtheorem}).  Miller's result (Theorem  \ref{miller}) shows that  the congruence condition is
satisfied.
To see that the map is injective, observe that distinct  classes, $\alpha$ and $\beta$ 
in 
\[
H_\infty (MU, K_p)
\]  
determine distinct sequences  $B^k (\alpha)$ and $B^k (\beta)$ 
and thus distinct components in $E_\infty (MU, K_p)$. 
\end{proof}

\begin{corollary}
There is an isomorphism
\[
\mathbb{D} \colon  H_\infty(MU, K_p) \to \pi_0 E_\infty (MU, K_p)
\]
\end{corollary}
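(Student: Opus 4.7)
My approach is to combine Theorem \ref{himpliese}, which already delivers injectivity of $\mathbb{D}$, with the AHR classification Theorem \ref{muorient}, which will make surjectivity essentially automatic. Any $E_\infty$ ring map is in particular a homotopy ring map compatible with power operations, so there is a well-defined forgetful map
\[
\mathrm{forget} \colon \pi_0 E_\infty(MU, K_p) \to H_\infty(MU, K_p),
\]
and my plan is to show that $\mathbb{D}$ is a section of it, which forces surjectivity.

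Given $\beta \in \pi_0 E_\infty(MU, K_p)$, first I set $\alpha := \mathrm{forget}(\beta)$. Ando's criterion (Theorem \ref{andocondition}) guarantees that the coordinate on $\G_m$ induced by $\alpha$ satisfies the Ando condition, so $\mathbb{D}(\alpha)$ is defined. To verify $\mathbb{D}(\alpha) = \beta$ in $\pi_0 E_\infty(MU, K_p)$ it suffices to check the two classes agree under the characteristic map
\[
b \colon \pi_0 E_\infty(MU, K_p) \to \prod_{k \geq 1} \Q_p,
\]
since $b$ is injective by Theorem \ref{muorient}. By construction $b(\mathbb{D}(\alpha)) = \{ B^k(\G_m, \Z_p, \alpha) \}$, and $b(\beta)$ equals the same sequence because $B^k(\G_m, \Z_p, -)$ depends only on the coordinate induced on the formal group, which is identical for $\alpha$ and $\beta$. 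Injectivity of $b$ then gives $\mathbb{D}(\alpha) = \beta$ and completes the proof.

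The bulk of the substance lives upstream: injectivity of $\mathbb{D}$ comes from Theorem \ref{himpliese}, which rests on the matching of the Ando condition with the moments-of-a-measure condition via Corollary \ref{andobk}, while surjectivity boils down to the injectivity of the AHR characteristic map together with the observation that the Bernoulli sequence is a formal invariant of the underlying coordinate. I do not foresee a genuine obstacle at this stage; once both Theorems \ref{himpliese} and \ref{muorient} are in hand, the present argument is essentially a diagram chase through the forgetful map and the characteristic map.
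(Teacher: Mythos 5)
Your proposal is correct and follows essentially the same route as the paper: both arguments reduce the isomorphism to checking that $\mathbb{D}$ and the forgetful map $\mathbb{C}$ compose to the identity, using the fact that the AHR characteristic map $b$ is injective and that $B^k(\G_m, \Z_p, -)$ depends only on the underlying coordinate (ring map), so that $b(\beta) = b(\mathbb{D}(\mathbb{C}(\beta)))$. Your version spells out the appeal to injectivity of $b$ and the role of Ando's criterion a bit more carefully, but the core steps match the paper's proof.
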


\begin{proof}
We check that composition with the forgetful map 
\[
\mathbb{C} \colon \pi_0 E_\infty (MU, K_p) \to H_\infty(MU, K_p)
\]
is the identity.  To begin, suppose 
\[
\alpha \colon MU \to K_p
\]
is merely $H_\infty$.  The map $\mathbb{D}$ identifies $\alpha$ with  the $E_\infty$ orientation corresponding to the
sequence $B^k (\alpha)$.  Under the identification in Theorem \ref{muorient} 
this component is identified with
$\alpha$.  The other composition is similar.
\end{proof}
The author feels that this calculation locates the power operations in the classification of $E_\infty$ maps
due to AHR.  As an application we have

\begin{corollary}
For odd primes $p$, the Todd genus 
\[
MU \to K_p
\]
is an $E_\infty$ map.
\end{corollary}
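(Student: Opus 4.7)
The plan is to invoke the isomorphism $\mathbb{D}\colon H_\infty(MU,K_p)\xrightarrow{\sim}\pi_0 E_\infty(MU,K_p)$ from Theorem \ref{himpliese} and its corollary. Since $\mathbb{D}$ is a section of the forgetful map, it suffices to show that the Todd genus lies in $H_\infty(MU,K_p)$, which by Ando's necessary and sufficient condition (Theorem \ref{andocondition}) reduces to verifying
\[
\prod_{c\in F[p]}(t+_F c)=[p]_F(t),
\]
where $F$ is the formal group law associated to the Todd genus.

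Following the convention of Example \ref{tood_exam}, the Todd genus classifies the multiplicative formal group law $F(x,y)=x+y-xy$ on $\G_m$ over $\Z_p$ with coordinate $t=1-u$. Then $[p]_F(t)=1-(1-t)^p$, and after adjoining $p$-th roots of unity the $p$-torsion splits as $F[p]=\{1-\zeta:\zeta^p=1\}$. A direct computation gives $t+_F(1-\zeta)=1-\zeta(1-t)$, so
\[
\prod_{c\in F[p]}(t+_F c)=\prod_{\zeta^p=1}\bigl(1-\zeta(1-t)\bigr)=1-(1-t)^p=[p]_F(t),
\]
using the standard cyclotomic identity $\prod_{\zeta^p=1}(1-X\zeta)=1-X^p$ with $X=1-t$. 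Hence the Ando condition holds, so the Todd genus is an $H_\infty$ map.

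Applying Theorem \ref{himpliese} together with the corollary that $\mathbb{D}$ is a section of the forgetful map, the Todd genus refines to an element of $\pi_0 E_\infty(MU,K_p)$ whose underlying homotopy ring map is the Todd genus itself. The main step here is really nothing beyond the short cyclotomic calculation above; the substantive content has been absorbed into the proof of Theorem \ref{thm1}, and the Todd genus provides a particularly clean instance of the Ando condition in which the $p$-torsion of the formal group is literally the shifted group of $p$-th roots of unity, so that the Ando product reduces immediately to a well-known polynomial identity.
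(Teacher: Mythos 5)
Your proposal is correct and follows essentially the same route as the paper: identify the Todd genus with the coordinate $t=1-u$ on $\G_m$, verify that this coordinate satisfies the Ando condition, and then invoke the isomorphism $\mathbb{D}$ of Theorem \ref{himpliese} to conclude. The only difference is that where the paper says "one can check that the Ando condition is satisfied," you have actually carried out that check via the cyclotomic factorization $\prod_{\zeta^p=1}(1-\zeta(1-t))=1-(1-t)^p=[p]_F(t)$, which is exactly the intended computation.
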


\begin{proof}
The Todd genus corresponds to the orientation $1 - u$ on $\G_m$ over $\Z_p$.  The Bernoulli number $B^k$ 
returns $- \frac{B_k}{k}$ as seen in Example \ref{tood_exam}.
One can check that the Ando condition is satisfied for this choice of coordinate and so this the Todd
genus  is an $E_\infty$ map.
\end{proof}

The Todd genus corresponds to the measure on $\Z_p^\times$ whose moments 
are 
\[
- \frac{B_k}{k} \left( 1 - a^k \right) \left( 1 - p^{k-1} \right). 
\]
This is the measure considered in 
Chapter II section $6$ of \cite{MR0466081} and  is known as the Mazur measure.

\subsection{Why $\langle a \rangle_F (t) $?}

We close by producing a conceptual argument for the appearance of the divided $a$ series in our calculations.
First recall the cannibalistic class $\rho$.  Suppose
\[
\xymatrix{
V \ar[d] \\
X 
}
\]
is a complex vector bundle (of virtual rank zero) over a connected base space $X$.
It is classical that
the cohomology of the Thom space,
$K_p(X^V)$, is a  free rank one  module over the cohomology of the base.
A generator is known as a Thom class, $U(V)$, of the bundle $V$.
A complex orientation determines a Thom class and the 
Thom isomorphism
\[
\tau_V \colon K_p (X) \to K_p (X^V)
\]
is multiplication by this Thom class. 
The following deﬁnition is essentially due to Bott.

\begin{definition}
Suppose $a$ is a unit in $\Z_p$.
Given a complex bundle $V$ as above define a class
\[
\rho_a  U(V)  = \frac{ \psi^a U (V) }{U(V) } \in \left( K_p X \right) ^\times
\]
\end{definition}
A few remarks are necessary. First, since $\psi^a$ is a ring map, $\psi^a U (V) $ 
is a generator for $K_p (X^V )$ as
a $K_p (X )$ module. 
The ratio in this definition  is the unique element of $K_p (X)$ 
that takes the Thom class $U(V)$ to the
class represented by $\psi^a \left( U ( V) \right)$.

The cannibalistic class $\rho_a$ produces an element in the units of the base.  As the bundle $V$ was arbitrary
we have constructed a map of spectra
\[
bu \langle 2 \rangle  \to  gl_1 K_p.
\]
\begin{example}
Let $\tline$ be the tautological line bundle over $\CP^\infty$.
Suppose we have a complex orientation $\alpha$ of $K_p$.   Let $t$ be the
Euler class of $\tline$ induced by this orientation.
In this situation we have the formula
\[
\rho_a  (t) = \frac{    [a]_F (t)}{t} = \langle a \rangle _F (t).
\]
\end{example}

Using the obstruction theory described above, an $E_\infty$ orientation 
is given by a  null homotopy of
\[
u \langle 2 \rangle \to gl_1 S \to gl_1 K_p.
\]
We have shown that such a null homotopy exists in 
Section \ref{munull}.
If we precede the above chain of spectra
with the stable Adams conjecture and follow with the $K(1)$ local logarithm we obtain the diagram
\[
\xymatrix{
           &               u \langle 2 \rangle    \ar[d]^{\psi^a - 1} \\ 
       &  u \langle 2 \rangle  \ar[r] \ar[d] &  gl_1 S    \ar[r]  &       gl_1 K_p  \ar[d]_{l_1}\\ 
       &     C  \ar[d]              &                         &          K_p \\
\CP^\infty \ar[r]^{e (\tline)}   &       bu \langle 2 \rangle    \ar@{-->}[uurr] ^\rho \\
}
\]
The arrow labeled $e(\tline)$ is Euler class of $\tline$.

This diagram shows
that we  are comparing the given orientation with $\psi^a$ of that orientation. 
As seen from the diagram, this ratio is 
the cannibalistic class;
\[
\rho \colon bu \langle 2 \rangle \to gl_1 K_p.
\]
This class followed by $l_1$ 
is represented by  $l_1 \langle a \rangle_F (t)$ in 
$\left( K_p \CP^\infty \right)^\times$. 
We would like to contribute this observation to Charles Rezk.




\begin{bibdiv}
\begin{biblist}

\bib{MR0293617}{article}{
      author={Adams, J.~F.},
      author={Harris, A.~S.},
      author={Switzer, R.~M.},
       title={Hopf algebras of cooperations for real and complex {$K$}-theory},
        date={1971},
        ISSN={0024-6115},
     journal={Proc. London Math. Soc. (3)},
      volume={23},
       pages={385\ndash 408},
      review={\MR{MR0293617 (45 \#2694)}},
}

\bib{MR0251716}{incollection}{
      author={Adams, J.~F.},
       title={Lectures on generalised cohomology},
        date={1969},
   booktitle={Category theory, homology theory and their applications, iii
  (battelle institute conference, seattle, wash., 1968, vol. three)},
   publisher={Springer},
     address={Berlin},
       pages={1\ndash 138},
      review={\MR{MR0251716 (40 \#4943)}},
}

\bib{MR1869850}{article}{
      author={Ando, M.},
      author={Hopkins, M.~J.},
      author={Strickland, N.~P.},
       title={Elliptic spectra, the {W}itten genus and the theorem of the
  cube},
        date={2001},
        ISSN={0020-9910},
     journal={Invent. Math.},
      volume={146},
      number={3},
       pages={595\ndash 687},
      review={\MR{MR1869850 (2002g:55009)}},
}

\bib{BG}{article}{
      author={Ando, Matthew},
      author={Blumberg, Andrew~J},
      author={Gepner, David},
      author={Hopkins, Michael~J.},
      author={Rezk, Charles},
       title={Units of ring spectra and thom spectra},
      eprint={http://lanl.arxiv.org/abs/0810.4535v2},
}

\bib{AHR}{article}{
      author={Ando, Matthew},
      author={Hopkins, Michael~J.},
      author={Rezk, Charles},
       title={Multiplicative orientations of {$KO$}-theory and of the spectrum
  of topolgical modular forms},
      eprint={http://www.math.uiuc.edu/~mando/papers/koandtmf.pdf},
}

\bib{MR2045503}{article}{
      author={Ando, Matthew},
      author={Hopkins, Michael~J.},
      author={Strickland, Neil~P.},
       title={The sigma orientation is an {$H\sb \infty$} map},
        date={2004},
        ISSN={0002-9327},
     journal={Amer. J. Math.},
      volume={126},
      number={2},
       pages={247\ndash 334},
      review={\MR{MR2045503 (2005d:55009)}},
}

\bib{MR1344767}{article}{
      author={Ando, Matthew},
       title={Isogenies of formal group laws and power operations in the
  cohomology theories {$E\sb n$}},
        date={1995},
        ISSN={0012-7094},
     journal={Duke Math. J.},
      volume={79},
      number={2},
       pages={423\ndash 485},
      review={\MR{MR1344767 (97a:55006)}},
}

\bib{MR0202130}{article}{
      author={Atiyah, M.~F.},
       title={Power operations in {$K$}-theory},
        date={1966},
     journal={Quart. J. Math. Oxford Ser. (2)},
      volume={17},
       pages={165\ndash 193},
      review={\MR{MR0202130 (34 \#2004)}},
}

\bib{MR942424}{article}{
    AUTHOR = {Baker, Andrew},
   author = {Clarke, Francis},
    author = {Ray, Nigel},
     author = {Schwartz, Lionel},
     TITLE = {On the {K}ummer congruences and the stable homotopy of
              {$B$}{U}},
   JOURNAL = {Trans. Amer. Math. Soc.},
  FJOURNAL = {Transactions of the American Mathematical Society},
    VOLUME = {316},
      YEAR = {1989},
    NUMBER = {2},
     PAGES = {385--432},
      ISSN = {0002-9947},
     CODEN = {TAMTAM},
   MRCLASS = {55N15 (55Q10 55Q50)},
  MRNUMBER = {MR942424 (90c:55003)},
MRREVIEWER = {Frederick Cohen},
}

\bib{MR551009}{article}{
      author={Bousfield, A.~K.},
       title={The localization of spectra with respect to homology},
        date={1979},
        ISSN={0040-9383},
     journal={Topology},
      volume={18},
      number={4},
       pages={257\ndash 281},
      review={\MR{MR551009 (80m:55006)}},
}

\bib{MR901254}{article}{
      author={Bousfield, A.~K.},
       title={Uniqueness of infinite deloopings for {$K$}-theoretic spaces},
        date={1987},
        ISSN={0030-8730},
     journal={Pacific J. Math.},
      volume={129},
      number={1},
       pages={1\ndash 31},
      review={\MR{MR901254 (89g:55017)}},
}

\bib{MR836132}{book}{
      author={Bruner, R.~R.},
      author={May, J.~P.},
      author={McClure, J.~E.},
      author={Steinberger, M.},
       title={{$H\sb \infty $} ring spectra and their applications},
      series={Lecture Notes in Mathematics},
   publisher={Springer-Verlag},
     address={Berlin},
        date={1986},
      volume={1176},
        ISBN={3-540-16434-0},
      review={\MR{MR836132 (88e:55001)}},
}

\bib{pclarke}{article}{
author = {F. Clarke},
title = {Operations in {$K$}-theory and {$p$}-adic analysis},
journal = {Groupe d'Étude d'Analyse Ultramétrique, Paris, 1986/87, 12 pages},
eprint = {http://www-maths.swan.ac.uk/staff/fwc/paris.dvi},
}

\bib{Clarke89}{article}{
	author = {F. W. Clarke},
	title = {The Universal von Staudt Theorems},
	journal = {Trans. Amer. Math. Soc.},
	volume = {315},
	year = {1989},
	pages = {591-603},
}

\bib{MR560409}{article}{
      author={Coleman, Robert~F.},
       title={Division values in local fields},
        date={1979},
        ISSN={0020-9910},
     journal={Invent. Math.},
      volume={53},
      number={2},
       pages={91\ndash 116},
      review={\MR{MR560409 (81g:12017)}},
}

\bib{MR0244989}{article}{
      author={tom Dieck, Tammo},
       title={Steenrod-{O}perationen in {K}obordismen-{T}heorien},
        date={1968},
        ISSN={0025-5874},
     journal={Math. Z.},
      volume={107},
       pages={380\ndash 401},
      review={\MR{MR0244989 (39 \#6302)}},
}

\bib{MR549303}{article}{
      author={Friedlander, Eric~M.},
       title={The infinite loop {A}dams conjecture via classification theorems
  for {${ F}$}-spaces},
        date={1980},
        ISSN={0305-0041},
     journal={Math. Proc. Cambridge Philos. Soc.},
      volume={87},
      number={1},
       pages={109\ndash 150},
      review={\MR{MR549303 (81b:55023)}},
}

\bib{knlocal}{article}{
      author={Hopkins, M.~J.},
       title={{$K(1)$} local {$E_\infty$} ring spectra},
  eprint={http://www.math.rochester.edu/u/faculty/doug/otherpapers/knlocal.pdf%
},
}

\bib{MR1403956}{inproceedings}{
      author={Hopkins, Michael~J.},
       title={Topological modular forms, the {W}itten genus, and the theorem of
  the cube},
        date={1995},
   booktitle={Proceedings of the {I}nternational {C}ongress of
  {M}athematicians, {V}ol.\ 1, 2 ({Z}\"urich, 1994)},
   publisher={Birkh\"auser},
     address={Basel},
       pages={554\ndash 565},
      review={\MR{MR1403956 (97i:11043)}},
}

\bib{MR2076002}{article}{
      author={Hovey, Mark},
       title={Operations and co-operations in {M}orava {$E$}-theory},
        date={2004},
        ISSN={1532-0081},
     journal={Homology Homotopy Appl.},
      volume={6},
      number={1},
       pages={201\ndash 236 (electronic)},
      review={\MR{MR2076002 (2005f:55003)}},
}

\bib{MR0447119}{incollection}{
      author={Katz, Nicholas~M.},
       title={{$p$}-adic properties of modular schemes and modular forms},
        date={1973},
   booktitle={Modular functions of one variable, {III} ({P}roc. {I}nternat.
  {S}ummer {S}chool, {U}niv. {A}ntwerp, {A}ntwerp, 1972)},
   publisher={Springer},
     address={Berlin},
       pages={69\ndash 190. Lecture Notes in Mathematics, Vol. 350},
      review={\MR{MR0447119 (56 \#5434)}},
}

\bib{MR0441928}{incollection}{
      author={Katz, Nicholas~M.},
       title={Formal groups and {$p$}-adic interpolation},
        date={1977},
   booktitle={Journ\'ees arithm\'etiques de caen (univ. caen, caen, 1976)},
   publisher={Soc. Math. France},
     address={Paris},
       pages={pp 55\ndash 65. Ast\'erisque No. 41\ndash 42},
      review={\MR{MR0441928 (56 \#319)}},
}

\bib{MR0466081}{book}{
      author={Koblitz, Neal},
       title={{$p$}-adic numbers, {$p$}-adic analysis, and zeta-functions},
   publisher={Springer-Verlag},
     address={New York},
        date={1977},
        ISBN={0-387-90274-0},
        note={Graduate Texts in Mathematics, Vol. 58},
      review={\MR{MR0466081 (57 \#5964)}},
}

\bib{MR1000381}{incollection}{
      author={Kuhn, Nicholas~J.},
       title={Morava {$K$}-theories and infinite loop spaces},
        date={1989},
   booktitle={Algebraic topology (arcata, ca, 1986)},
      series={Lecture Notes in Math.},
      volume={1370},
   publisher={Springer},
     address={Berlin},
       pages={243\ndash 257},
      review={\MR{MR1000381 (90d:55014)}},
}

\bib{MR0209287}{article}{
      author={Lubin, Jonathan},
       title={Finite subgroups and isogenies of one-parameter formal {L}ie
  groups},
        date={1967},
        ISSN={0003-486X},
     journal={Ann. of Math. (2)},
      volume={85},
       pages={296\ndash 302},
      review={\MR{MR0209287 (35 \#189)}},
}

\bib{MR0494076}{article}{
      author={Madsen, I.},
      author={Snaith, V.},
      author={Tornehave, J.},
       title={Infinite loop maps in geometric topology},
        date={1977},
        ISSN={0305-0041},
     journal={Math. Proc. Cambridge Philos. Soc.},
      volume={81},
      number={3},
       pages={399\ndash 430},
      review={\MR{MR0494076 (58 \#13007)}},
}

\bib{MR0095821}{article}{
   title={An interpolation series for continuous functions of a {$p$}-adic
    variable},
 	author={Mahler, K.},
        date={1958},
        ISSN={0075-4102},
     journal={J. Reine Angew. Math.},
      volume={199},
       pages={23\ndash 34},
      review={\MR{MR0095821 (20 \#2321)}},
}

\bib{MR0494077}{book}{
      author={May, J.~Peter},
       title={{$E\sb{\infty }$} ring spaces and {$E\sb{\infty }$} ring
  spectra},
   publisher={Springer-Verlag},
     address={Berlin},
        date={1977},
        note={With contributions by Frank Quinn, Nigel Ray, and J\o rgen
  Tornehave, Lecture Notes in Mathematics, Vol. 577},
      review={\MR{MR0494077 (58 \#13008)}},
}

\bib{MR686158}{incollection}{
      author={Miller, Haynes},
       title={Universal {B}ernoulli numbers and the {$S\sp{1}$}-transfer},
        date={1982},
   booktitle={Current trends in algebraic topology, part 2 (london, ont.,
  1981)},
      series={CMS Conf. Proc.},
      volume={2},
   publisher={Amer. Math. Soc.},
     address={Providence, RI},
       pages={437\ndash 449},
      review={\MR{MR686158 (85b:55029)}},
}

\bib{MR0290382}{article}{
      author={Quillen, Daniel},
       title={Elementary proofs of some results of cobordism theory using
  {S}teenrod operations},
        date={1971},
        ISSN={0001-8708},
     journal={Advances in Math.},
      volume={7},
       pages={29\ndash 56 (1971)},
      review={\MR{MR0290382 (44 \#7566)}},
}

\bib{MR737778}{article}{
      author={Ravenel, Douglas~C.},
       title={Localization with respect to certain periodic homology theories},
        date={1984},
        ISSN={0002-9327},
     journal={Amer. J. Math.},
      volume={106},
      number={2},
       pages={351\ndash 414},
      review={\MR{MR737778 (85k:55009)}},
}

\bib{MR860042}{book}{
      author={Ravenel, Douglas~C.},
       title={Complex cobordism and stable homotopy groups of spheres},
      series={Pure and Applied Mathematics},
   publisher={Academic Press Inc.},
     address={Orlando, FL},
        date={1986},
      volume={121},
        ISBN={0-12-583430-6; 0-12-583431-4},
      review={\MR{MR860042 (87j:55003)}},
}

\bib{MR2219307}{article}{
      author={Rezk, Charles},
       title={The units of a ring spectrum and a logarithmic cohomology
  operation},
        date={2006},
        ISSN={0894-0347},
     journal={J. Amer. Math. Soc.},
      volume={19},
      number={4},
       pages={969\ndash 1014 (electronic)},
      review={\MR{MR2219307 (2007h:55006)}},
}

\bib{MR1610452}{incollection}{
      author={Serre, Jean-Pierre},
       title={Groupes {$p$}-divisibles (d'apr\`es {J}. {T}ate)},
        date={1995},
   booktitle={S\'eminaire {B}ourbaki, {V}ol.\ 10},
   publisher={Soc. Math. France},
     address={Paris},
       pages={Exp.\ No.\ 318, 73\ndash 86},
      review={\MR{MR1610452}},
}

\bib{bjw}{thesis}{
      author={Walker, Barry~John},
       title={Orientations and {$p$}-adic analysis},
        type={Ph.D. Thesis},
}

\end{biblist}
\end{bibdiv}

\end{document}